\pgfplotsset{compat=newest}
\theoremstyle{plain}
\newtheorem{theorem}{Theorem}[section]
\newtheorem{proposition}[theorem]{Proposition}
\theoremstyle{definition}
\newtheorem{remark}{Remark}[section]
\newcommand{\textoverline}[1]{$\overline{\mbox{#1}}$}
\DeclareMathOperator{\dv}{div}
\DeclareMathOperator{\curl}{curl}
\newcommand{\vv}{v}
\begin{document}

\title{Variational and thermodynamically consistent finite element discretization for heat conducting viscous fluids}

\author{Evan S. Gawlik\thanks{\noindent Department of Mathematics,  University of Hawai`i at M\textoverline{a}noa, \href{egawlik@hawaii.edu}{egawlik@hawaii.edu}} \; and \; Fran\c{c}ois Gay-Balmaz\thanks{\noindent Division of Mathematical Sciences, Nanyang Technological University, Singapore 637371, \href{francois.gb@ntu.edu.sg}{francois.gb@ntu.edu.sg}}}

\date{}

\maketitle

\begin{abstract}
Respecting the laws of thermodynamics is crucial for ensuring that numerical simulations of dynamical systems deliver physically relevant results.
In this paper, we construct a structure-preserving and thermodynamically consistent finite element method and time-stepping scheme for heat conducting viscous fluids, with general state equations. The method is deduced by discretizing a variational formulation for nonequilibrium thermodynamics that extends Hamilton's principle for fluids to systems with irreversible processes. The resulting scheme preserves the balance of energy and mass to machine precision, as well as the second law of thermodynamics, both at the spatially and temporally discrete levels.
The method is shown to apply both with insulated and prescribed heat flux boundary conditions, as well as with prescribed temperature boundary conditions. We illustrate the properties of the scheme with the Rayleigh-B\'enard thermal convection.
While the focus is on heat conducting viscous fluids, the proposed discrete variational framework paves the way to a systematic construction of thermodynamically consistent discretizations of continuum systems.
\end{abstract}

%\keywords{Structure-preserving discretization; heat conducting viscous fluid; Navier-Stokes-Fourier equations; Rayleigh-B\'enard convection.}

%\ccode{AMS Subject Classification: 80M10, 65P10, 76M60, 37K06, 37K65}

\section{Introduction}
 
Structure preserving discretization of continuum systems, such as fluids and elastic bodies, is today widely recognized as an essential tool for the construction of numerical schemes when the long time accuracy and the respect of the balance and conservation laws of the simulated system are crucial. Such properties are especially relevant in the context of geophysical fluid dynamics for weather and climate prediction, or in the context of plasma physics.

A well-known constructive approach to deriving such structure preserving discretizations is to exploit the variational formulation underlying the equations of motion. This formulation is deduced from Hamilton's critical action principle and provides a useful setting for both the temporal and spatial discretization steps. While variational time integrators for finite dimensional systems are today well-established (see \cite{MaWe2001} and the large series of subsequent works), spatial and spacetime discrete variational approaches for continuum systems are still undergoing foundational developments (e.g. \cite{MaPaSh1998,LeMaOrWe2003,PaMuToKaMaDe2010,DeGBRa2016,DeGB2022,GaGB2020}).

Despite their wide range of applicability, a main limitation of variational formulations issued from Hamilton's principle is their inability to consistently include irreversible processes in the systems. In most of the real world applications of continuum mechanics, however, such processes do have a deep impact on the dynamics. For instance, in the case of fluid dynamics, the processes of heat conduction, diffusion, viscosity, or chemical reactions, play a major role in geophysical, astrophysical, engineering and technological applications. Thermal convection occurring in the planets' oceans, atmospheres and mantles, as well as in stars, is a typical phenomenon occurring in conjunction with the process of heat transfer among others. Importantly, due to their irreversible character, such phenomena fit into the realm of nonequilibrium thermodynamics, governed by the two laws imposing constraints on the energy and entropy behavior. In order to get reliable and physically meaningful numerical solutions for such systems, it is of paramount importance to preserve these laws at the discrete level, thereby highlighting the need to extend variational discretization from reversible continuum mechanics to nonequilibrium thermodynamics.

We propose in this paper a first step in this direction for the case of fluid dynamics with heat conduction and viscosity. The general form of the equations of evolution for such fluids on a bounded domain $ \Omega \subset \mathbb{R} ^d$, $d=2,3$, with smooth boundary is
\begin{equation}\label{Equations_Intro} 
\left\{
\begin{array}{l}
\vspace{0.2cm}\displaystyle \rho  ( \partial _t u+ u \cdot \nabla u)= - \rho  \nabla \phi - \nabla p + \operatorname{div} \sigma\\
 \vspace{0.2cm}\displaystyle T (\partial_t s + \dv(s u)  + \operatorname{div} j _s) =  \sigma \!: \!\nabla u - j _s\! \cdot  \!\nabla T\\
\displaystyle \partial_t \rho + \dv(\rho u)=0,
\end{array}
\right.
\end{equation}
with $u$ the fluid velocity, $ \rho  $ the mass density, $s$ the entropy density, $T$ the temperature, and $ p$ the pressure. The equations also depend on the gravitational potential $ \phi  $, while the irreversible processes are described by the viscous stress tensor $ \sigma $ and the entropy flux $j_s$. In this paper we shall take for them the usual Navier-Stokes and Fourier expressions. The equations are supplemented by the no-slip boundary condition for the velocity $u|_{ \partial \Omega } =0$ and by one of the following thermal boundary conditions:
\begin{equation}\label{BC_Intro} 
T j_s \cdot n|_{ \partial \Omega }=q_0 \qquad\text{or}\qquad T|_{ \partial \Omega }=T_0.
\end{equation} 
These correspond to either prescribed heat flux or prescribed temperature boundary conditions, the case $q_0=0$ being that of an insulated boundary. The system is closed by a given state equation, which we keep general in this paper, not necessarily given by the perfect gas.

Our approach is based on a variational formulation for nonequilibrium thermodynamics that extends Hamilton's principle to include irreversibility, developed in \cite{GBYo2017a,GBYo2017b,GBYo2018b}. Importantly for the present work, this approach extends to the irreversible setting the well-known variational and geometric formulation of hydrodynamics on diffeomorphism groups initiated in \cite{Ar1966}. This variational formulation gives \eqref{Equations_Intro} together with an appropriate boundary condition from \eqref{BC_Intro} as critical point conditions.  It should be noted that in this paper, we use the term ``variational formulation'' to refer to a critical action principle that characterizes solution curves, not a variational (or weak) formulation in the finite element sense.  One can use the former to construct the latter, but not vice versa in general.

The resulting class of schemes, which is written down in standard finite element notation in~\eqref{fully_discrete_NSF}, satisfies the two laws of thermodynamics at the fully discrete level. More precisely, the total energy is shown to be exactly preserved at the fully discrete level when the fluid is adiabatically closed, while a discrete energy balance holds in the presence of external heating. 
Regarding the second law, the entropy generated by the internal irreversible processes is shown to grow at each time step on each cell. 
 
It turns out that the variational formulation for nonequilibrium thermodynamics yields the equations for heat conducting viscous fluids in a weak form that is quite suitable to achieve thermodynamic consistency and, at the same time, can naturally accommodate both prescribed heat flux (Neumann) or prescribed temperature (Dirichlet) boundary conditions. The variational formulation also naturally involves an internal entropy variable, helping identify the internal entropy production at the fully discrete level, which is well-known to differ from the rate of entropy change in the presence of the entropy flux.

To our knowledge, our scheme is the first scheme to respect both the balance of total energy and the second law of thermodynamics locally (i.e. elementwise) among finite element/finite volume approaches for heat conducting viscous fluids.  One existing scheme that comes close to achieving these goals is the finite volume scheme studied in \cite{BaLuMiShYu2023} and the references therein.  It satisfies a global energy balance law and a global entropy production law, but the laws contain artificial sources of energy dissipation and entropy production.  It also does not appear to guarantee entropy production locally.  Another approach that achieves global, but not local, entropy production is described in \cite{HuFrMa1986,ShHuJo1991}.  It relies on a change of variables that couples the energy density with other variables, and for this reason it does not appear to respect the balance of total energy.  Related techniques are used in \cite{TaZh2006} to achieve global entropy production in the one-dimensional setting.  In the absence of irreversible processes, \cite{LeRiDu2023} has recently constructed a scheme that conserves both total entropy and total energy. This was also accomplished in Section 6 of \cite{GaGB2020}.

Our paper is structured as follows. In \S\ref{sec_2} we recall the variational formulation for heat conducting viscous fluids in the insulated case following \cite{GBYo2017b} and then present the modifications needed for the treatment of a prescribed temperature or prescribed heat flux on the boundary, consistently with the variational formulation of open systems given in \cite{GBYo2018b}. We also give the weak formulation of the equations resulting from the variational framework. In \S\ref{Sec_discrete}, following previous works on the reversible case, we carry out this variational formulation on a discrete version of the diffeomorphism group, based on a discontinuous Galerkin discretization of functions. A suitable spatially discrete version of heat conducting viscous flow is obtained for any given discrete fluid Lagrangian.
A main subsequent step is the appropriate discretization of the thermodynamic fluxes, which is chosen in accordance with the considered Neumann or Dirichlet boundary conditions, and realized for the Navier-Stokes-Fourier case. It is then shown that the resulting spatial discretization satisfies the two laws of thermodynamics exactly. Thanks to its structure preserving form, the resulting finite element scheme can be followed by an energy preserving time discretization which allows satisfaction of the second law at each step on each fluid cell.
Rayleigh-B\'enard convection tests are carried out in \S\ref{sec_4} for both prescribed temperature (Dirichlet) and prescribed heat flux (Neumann) boundary conditions, for several values of Rayleigh numbers, illustrating the predictive value and thermodynamic consistency of our scheme. We also test the numerical convergence of our scheme with respect to spatial and temporal refinements, and provide a comparison with a recently derived finite volume method for the Navier-Stokes-Fourier equations.

\section{Variational formulation for heat conducting viscous fluids}\label{sec_2}

We review here the variational formulation of nonequilibrium thermodynamics underlying the structure preserving discretization method that we present for heat conducting viscous fluids.
This variational formulation, developed in \cite{GBYo2017a,GBYo2017b,GBYo2018b}, is an extension of the Hamilton principle  which allows one to systematically include irreversible processes  in the dynamics, such as friction, viscosity, heat conduction, matter transfer, or chemical reactions. Importantly for the present work, the variational formulation applies to adiabatically closed systems as well as systems exchanging heat and matter with their surroundings, see \cite{GBYo2018b}.

We recall in \S\ref{subsec_insulated} this formulation for heat conducting viscous fluids with insulated boundaries (homogeneous Neumann boundary conditions), and then present the modifications needed for the treatment of a prescribed temperature on the boundary (Dirichlet boundary conditions) or prescribed heat flux at the boundary (nonhomogeneous Neumann boundary conditions), see \S\ref{Dirichlet_T}. A weak formulation of the equation is deduced from the variational formulation in \S\ref{weak_form_continuous} in a unified way for all boundary conditions. 

In order to help identify the role and meaning of each variable in a simpler context, both for the adiabatically closed case and for the open case, we present in \ref{A} an application of the variational formulation to an elementary finite dimensional example.

\subsection{Insulated boundaries}\label{subsec_insulated}

The variational formulation is best expressed in the material (or Lagrangian) description since it is in this description that it is an extension of the Hamilton principle of continuum mechanics and takes its simpler form. The variational formulation in the spatial (or Eulerian) description that underlies our approach is then deduced by using the fluid relabelling symmetry.
A finite element spatial discretization of this ``Lagrangian-to-Eulerian" variational approach will be developed in \S\ref{Sec_discrete}.

\paragraph{Lagrangian description.} Let $ \Omega \subset \mathbb{R} ^d$, $d=2,3$, be a bounded domain with smooth boundary, $ \operatorname{Diff}( \Omega )$ the group of diffeomorphisms of $ \Omega $, and $ \operatorname{Diff}_0( \Omega)$ the subgroup of diffeomorphisms keeping $\partial  \Omega $  pointwise fixed. We denote by $ \mathcal{F} ( \Omega )$ and $ \mathcal{F} ( \Omega ) ^* $ the spaces of functions and densities on $ \Omega $ with sufficient regularity. We identify densities with functions, bearing in mind that the action of $\operatorname{Diff}( \Omega )$ on $ \mathcal{F} ( \Omega ) ^* $ under this identification differs from the action of $\operatorname{Diff}( \Omega )$ on $ \mathcal{F} ( \Omega )$; see~(\ref{Lagr_to_Eul_1}).  
In the Lagrangian description, the motion of a compressible fluid in the domain $ \Omega$ is given by two time dependent maps, the fluid configuration map $\varphi :[t_0,t_1] \rightarrow \operatorname{Diff}_0( \Omega )$, giving the position $x= \varphi (t,X)$ at time $t$ of a particle located at $X$ at $t=t_0$, and the entropy density $ S:[t_0,t_1]  \rightarrow \mathcal{F} ( \Omega )^*$. From mass conservation, the mass density $ \varrho _0 \in \mathcal{F} ( \Omega )^*$ is constant in time in the Lagrangian description, $ \varrho (t,X)= \varrho _0(X)$.

\medskip

In the absence of irreversible processes, the entropy density is also constant in time $S(t,X)=S_0(X)$ and the equations of motion follow from the Hamilton principle
\begin{equation}\label{HP_fluid} 
\delta \int_{t_0}^{t_1}L( \varphi , \dot \varphi , S_0, \varrho _0) {\rm d}t=0
\end{equation}
for variations $ \delta \varphi $ with $ \delta \varphi |_{t=t_0,t_1}=0$. In \eqref{HP_fluid} the function $L:T \operatorname{Diff}_0( \Omega ) \times \mathcal{F} ( \Omega )^* \times \mathcal{F} ( \Omega )^*  \rightarrow \mathbb{R}$ is the Lagrangian of the compressible fluid model, the standard expression being
\begin{equation}\label{material_L} 
L( \varphi , \dot \varphi , S, \varrho ) =\int_ \Omega \Big[\frac{1}{2} \varrho  | \dot   \varphi | ^2 - \epsilon   \big( \varrho / J \varphi , S/ J \varphi \big)- \varrho \phi ( \varphi )\Big] {\rm d} X
\end{equation}
with $J \varphi $ the Jacobian of $\varphi $, $ \epsilon  $ the internal energy density, and $ \phi (x)$ the gravitational potential.

\medskip 

For the heat conducting viscous fluid one also needs to specify the phenomenological expressions of the viscous stress tensor and entropy flux, denoted $ P $ and $J_S$ in the Lagrangian description. The extension of Hamilton's principle \eqref{HP_fluid} to heat conducting viscous fluids given in \cite{GBYo2017b} involves two additional variables besides $ \varphi (t) \in \operatorname{Diff}_0( \Omega ) $ and $S(t) \in \mathcal{F} ( \Omega ) ^*$: the internal entropy density variable $ \Sigma (t) \in \mathcal{F} ( \Omega )^*$, whose time rate of change is the internal entropy production, and the thermal displacement $ \Gamma (t) \in \mathcal{F} ( \Omega )$, whose time rate of change is the temperature.
The variational principle reads as follows.

Find the curves $\varphi: [t_0,t_1] \rightarrow  \operatorname{Diff}_0( \Omega )$, $S, \Sigma :[t_0,t_1]\rightarrow  \mathcal{F} ( \Omega)^*$, and $\Gamma: [t_0,t_1]\rightarrow \mathcal{F} ( \Omega )$, which are critical for the \textit{variational condition}
\begin{equation}\label{VP_fluid} 
\delta \int_{t_0}^{t_1} \Big[L\big(\varphi, \dot \varphi,  S, \varrho _0\big) +  \int_ \Omega(S- \Sigma ) \dot \Gamma   \,{\rm d} X  \Big]{\rm d}t =0
\end{equation}
subject to the \textit{phenomenological constraint}
\begin{equation}\label{KC_fluid}
\frac{\delta  L }{\delta  S}\dot \Sigma= \underbrace{- P: \nabla\dot \varphi }_{\text{viscosity}} +  \underbrace{J_S \cdot \nabla \dot\Gamma  }_{\text{heat conduction}}
\end{equation}  
and for variations $ \delta \varphi $, $ \delta \Sigma $, $ \delta \Gamma $ subject to the \textit{variational constraint}
\begin{equation}\label{VC_fluid}
\frac{\delta   L}{\delta  S}\delta \Sigma = \underbrace{- P: \nabla \delta  \varphi }_{\text{viscosity}}  + \underbrace{J_S \cdot \nabla \delta \Gamma  }_{\text{heat conduction}}
\end{equation} 
with $\delta \varphi|_{t=t_0,t_1}= \delta\Gamma|_{t=t_0,t_1}=0$, and $\delta\varphi|_{\partial\Omega}=0$, while the variations $ \delta S$ are free. The critical condition of this variational formulation gives the heat conducting viscous fluid equation in the Lagrangian description, see \S\ref{B_0}.
In \eqref{KC_fluid} and \eqref{VC_fluid}  $\frac{\delta  L }{\delta  S}$ is the functional derivative of $L$ with respect to $S$, defined as $ \left. \frac{d}{d\varepsilon}\right|_{\varepsilon=0} L( \varphi , \dot \varphi , \varrho , S+ \varepsilon \delta S)= \int_ \Omega \frac{\delta L}{\delta S} \delta S \,{\rm d} X$, for all $ \delta S$. It is identified with minus the temperature of the fluid, denoted $ \mathfrak{T} = - \frac{\delta L}{\delta S}$ in the Lagrangian description.

\begin{remark}[Structure of the variational formulation] The variational formulation \eqref{VP_fluid}--\eqref{VC_fluid} is an extension of the Hamilton principle \eqref{HP_fluid} for fluids which includes two types of constraints: a kinematic (phenomenological) constraint \eqref{KC_fluid} on the critical curve and a variational constraint \eqref{VC_fluid} on the variations to be considered when computing this critical curve. The two constraints are related in a systematic way which formally involves replacing the time rate of changes (here $\dot \Sigma $, $\dot \varphi $, and $ \dot  \Gamma $) by the $ \delta $-variations (here $\delta  \Sigma $, $\delta  \varphi $, and $ \delta   \Gamma $). More precisely, on the right hand side of \eqref{KC_fluid} the two terms correspond to the dissipated power density associated to the processes of viscosity and heat conduction, with their virtual version appearing in \eqref{VC_fluid}. In coordinates $P: \nabla \dot \varphi = P^A_a \partial _A\dot \varphi ^a $ and $J_S \cdot \nabla \dot \Gamma = (J_S)^A \partial _A\dot \Gamma  $. This setting is common to the variational formulation of adiabatically closed thermodynamic systems, see \cite{GBYo2017a,GBYo2017b}, and is a nonlinear version of the \textit{Lagrange-d'Alembert principle} used in nonholonomic mechanics. In \ref{A} we recall an application of this type of variational formulation to an elementary finite dimensional thermodynamic system for which the computation of the critical condition is straightforward and which helps explain the role and the meaning of the variables $ \Sigma $ and $ \Gamma $.
\end{remark}

\paragraph{Eulerian description.} We recall here how  \eqref{VP_fluid}--\eqref{VC_fluid} can be converted to the Eulerian frame, thereby yielding the variational formulation underlying our structure preserving finite element discretization. The Eulerian versions of the variables  $\dot \varphi , \varrho _0, S, \Sigma , \Gamma $ are the Eulerian velocity $u$, mass density $ \rho  $, entropy density $s$, internal entropy density $ \varsigma $, and thermal displacement $ \gamma $ given as
\begin{equation}\label{Lagr_to_Eul_1} 
\begin{aligned} 
u&= \dot \varphi \circ \varphi ^{-1} \in \mathfrak{X} _0( \Omega ),\\
\rho & = (\varrho _0  \circ \varphi ^{-1} )J \varphi^{-1} \in \mathcal{F} ( \Omega )^* , \qquad s= (S \circ \varphi ^{-1} )J \varphi^{-1}  \in \mathcal{F} ( \Omega )^*,\\
\varsigma&=( \Sigma \circ \varphi ^{-1} )J \varphi ^{-1} \in \mathcal{F} ( \Omega ) ^*, \qquad \gamma = \Gamma  \circ \varphi ^{-1} \in \mathcal{F} ( \Omega ),
\end{aligned} 
\end{equation} 
where $\mathfrak{X} _0( \Omega )= \{ u \in \mathfrak{X} ( \Omega )\mid u|_ { \partial \Omega }=0\}$ is the space of vector fields on $ \Omega $ vanishing on the boundary.
The Eulerian viscous stress tensor $ \sigma $ and entropy flux $j_s$ are related to their Lagrangian counterparts $P$ and $J_S$ via the Piola transformations
\begin{equation}\label{Lagr_to_Eul_2} 
\sigma =  ( ( P \cdot \nabla \varphi ^\mathsf{T}  ) \circ \varphi ^{-1} )J \varphi ^{-1}  \qquad\text{and}\qquad j_s= ((\nabla \varphi \cdot J_S) \circ \varphi ^{-1} ) J \varphi ^{-1},
\end{equation}
see \cite{MaHu1994}.
From its relabelling symmetries, the Lagrangian $L$ can be expressed in terms of the Eulerian variables as $L(\varphi,\dot{\varphi},S,\varrho_0)=\ell(u,\rho,s)$. For the standard expression given in \eqref{material_L} one gets
\begin{equation}\label{standard_ell} 
\ell(u, \rho  , s) = \int_ \Omega \Big[\frac{1}{2} \rho  | u| ^2 - \epsilon ( \rho  , s) - \rho  \phi \Big]{\rm d} x.
\end{equation}

With relations \eqref{Lagr_to_Eul_1} and \eqref{Lagr_to_Eul_2}, the Eulerian version of the principle \eqref{VP_fluid}--\eqref{VC_fluid} reads as follows. Find the curves $u: [t_0,t_1] \rightarrow  \mathfrak{X} _0( \Omega )$, $s, \varsigma  :[t_0,t_1] \rightarrow  \mathcal{F} ( \Omega)^*$, and $ \gamma : [t_0,t_1] \rightarrow \mathcal{F} ( \Omega )$, which are critical for the \textit{variational condition}
\begin{equation}\label{VP_NSF_spatial}
\delta \int_{t_0}^{t_1}\Big[ \ell(u , \rho , s)+\int_ \Omega (s- \varsigma )D_t \gamma \, {\rm d} x \Big] {\rm d}t=0,
\end{equation}
with the \textit{phenomenological constraint} and \textit{variational constraint} given by
\begin{equation}\label{KC_NSF_spatial}
\frac{\delta   \ell}{\delta   s} \bar D_t \varsigma   = \underbrace{  -  \sigma : \nabla u  }_{\text{viscosity}}+ \underbrace{ j_s \cdot \nabla (D_t \gamma) }_{\text{heat conduction}},
\end{equation} 
\begin{equation}\label{VC_NSF_spatial}
\frac{\delta   \ell}{ \delta  s}  \bar D_\delta  \varsigma   =  \underbrace{ -\sigma : \nabla  v }_{\text{viscosity}}+ \underbrace{ j_s \cdot \nabla (D_ \delta \gamma)}_{\text{heat conduction}},
\end{equation}
and the Euler-Poincar\'e constraints
\begin{equation}\label{EP_constraints} 
\delta u = \partial _t v+[ u , v ], \qquad \delta \rho =- \operatorname{div}( \rho v),
\end{equation} 
where $[u,v]$ denotes the Lie bracket of the vector fields $u$ and $v$; see \cite{GBYo2017b} for details.
Here $ v= \delta \varphi \circ \varphi ^{-1} :[t_0,t_1] \rightarrow  \mathfrak{X} _0(\Omega )$ and $ \delta  \gamma:[t_0,t_1] \rightarrow  \mathcal{F} ( \Omega )$ are arbitrary curves with $ v |_{t=t_0,t_1}=0$ and $ \delta \gamma |_{t=t_0,t_1}=0$. Also, we have $ \delta s, \delta \varsigma:[t_0,t_1] \rightarrow  \mathcal{F} ( \Omega )^*$, $ \delta u:[t_0,t_1] \rightarrow \mathfrak{X} ( \Omega )$ and $ \delta \rho  :[t_0,t_1] \rightarrow \mathcal{F} ( \Omega ) ^* $.
In \eqref{VP_NSF_spatial}--\eqref{VC_NSF_spatial} we have used the following notations for the Lagrangian derivatives and variations of functions $f$ and densities $g$:
\begin{equation}\label{Lagr_derivatives} 
\begin{aligned} 
D_tf &= \partial _t f + u \cdot \nabla f &  & \qquad D_\delta  f = \delta f  + v \cdot \nabla f\\
\bar D_tg &= \partial _t g+ \operatorname{div}(gu) & &\qquad  \bar D_\delta  g= \delta g  +  \operatorname{div} ( g v ),
\end{aligned}
\end{equation} 
where $ v = \delta \varphi \circ \varphi ^{-1} $ and $u= \dot  \varphi \circ \varphi ^{-1} $.

By applying the variational principle \eqref{VP_NSF_spatial}--\eqref{EP_constraints}, we get the equations for a compressible heat conducting viscous fluid with Lagrangian $\ell(u, \rho  , s)$
\begin{equation}\label{Equations_Eulerian} 
\left\{
\begin{array}{l}
\vspace{0.2cm}\displaystyle
( \partial _t + \pounds _ u ) \frac{\delta  \ell}{\delta u }= \rho  \nabla \frac{\delta \ell}{\delta \rho }+ s \nabla \frac{\delta \ell }{\delta  s }+ \operatorname{div}  \sigma \\
\vspace{0.2cm}\displaystyle  - \frac{\delta \ell }{\delta s } (\bar D_ts  + \operatorname{div} j _s) =  \sigma \!: \!\nabla u + j _s\! \cdot  \!\nabla \frac{\delta \ell}{\delta s }\\
\displaystyle \bar D _t \rho =0,
\end{array}
\right.
\end{equation}
with $\pounds _u$ the Lie derivative of one-form densities, together with the conditions
\begin{equation}\label{additional_conditions} 
D_t \gamma = - \frac{\delta  \ell}{\delta  s}, \qquad \bar D_t \varsigma = \bar D_t s + \operatorname{div} j_s, \qquad\text{and}\qquad j_s \cdot n =0 \;\;\text{on}\;\; \partial \Omega,
\end{equation} 
see \S\ref{B_1} for details on the derivation. We have used the functional derivatives of $\ell$, defined as $ \left. \frac{d}{d\varepsilon}\right|_{\varepsilon=0} \ell(u+ \varepsilon \delta u, \rho  , s)= \int_ \Omega \frac{\delta \ell}{\delta u} \cdot \delta u  \,{\rm d} x$, etc.  
Since $- \frac{\delta \ell}{\delta s}$ is identified with the temperature $T$, the first condition in \eqref{additional_conditions} implies that the variable $ \gamma $ is the thermal displacement. From the second condition it follows that $\bar D_t \varsigma$ is the rate of internal entropy production, which must be positive by the second law of thermodynamics, namely
\begin{equation}\label{2nd_law_continuum}
\bar D_t \varsigma= \bar D_t s + \operatorname{div}j_s \geq 0. 
\end{equation}
The last condition in \eqref{additional_conditions} is the insulated boundary condition.

We refer to \cite{GB2019,ElGB2021,GBPu2022,GBPu2023} for the use of this type of variational formulation for modelling purposes in nonequilibrium thermodynamics.

\paragraph{Standard Lagrangian.} By using the Lagrangian for Euler fluids given in \eqref{standard_ell} 
one gets $\frac{\delta \ell}{\delta u}= \rho  u$, $\frac{\delta  \ell}{\delta  \rho  } = \frac{1}{2}| u | ^2 - \frac{\partial \epsilon }{\partial \rho  } - \phi $, and $-\frac{\delta \ell}{\delta s}= \frac{\partial \epsilon  }{\partial s}= T >0$ the temperature, so that the fluid momentum equation and the entropy equation in \eqref{Equations_Eulerian} take the usual form
\[
\rho  ( \partial _t u+ u \cdot \nabla u)= - \rho  \nabla \phi - \nabla p + \operatorname{div} \sigma, \qquad T (\bar D_ts  + \operatorname{div} j _s) =  \sigma \!: \!\nabla u - j _s\! \cdot  \!\nabla T
\]
with $p= \frac{\partial \epsilon }{\partial \rho  } \rho  + \frac{\partial \epsilon }{\partial s} s - \epsilon$ the pressure.

\paragraph{Energy and entropy balances.} Defining the total energy $ \mathcal{E} = \left\langle \frac{\delta \ell}{\delta u}, u \right\rangle - \ell(u, \rho  , s)$, from \eqref{Equations_Eulerian} and the boundary conditions $u|_{ \partial \Omega }=0$ and $j_s \cdot n=0$ we directly get the energy conservation
\begin{equation}\label{energy_cons} 
\frac{d}{dt} \mathcal{E} = \int_ \Omega \operatorname{div}\Big( \rho  \frac{\delta \ell}{\delta \rho  } u + s  \frac{\delta \ell}{\delta s  }u + \sigma \cdot u + j_s \frac{\delta \ell}{\delta s} \Big) {\rm d} x=0.
\end{equation}
From now on we will focus on the Navier-Stokes-Fourier case with the phenomenological relations
\begin{equation}\label{phenom_relations}
\begin{aligned} 
\sigma =\sigma (u)&= 2 \mu \operatorname{Def} u +  \lambda (\operatorname{div}u) \delta, & &\text{with $\mu \geq 0$ and $\zeta = \lambda + \frac{2}{d} \mu \geq 0$}\\
j_s=j_s (T)&=-  \frac{1}{T} \kappa \nabla T, & &\text{with $\kappa \geq 0$}.
\end{aligned}
\end{equation} 
Here $\operatorname{Def}u = \frac{1}{2}(\nabla u + \nabla u^\mathsf{T}) $ is the rate of deformation tensor, $\mu\geq 0$ and $ \zeta = \lambda + \frac{2}{d} \mu \geq 0$ are the shear and bulk viscosity coefficients, and $ \kappa\geq 0$ is the thermal conductivity coefficient. The signs of the coefficients are imposed by the second law of thermodynamics $ \bar D _t s+ \operatorname{div}j_s \geq 0$ . Indeed, with \eqref{phenom_relations}, the entropy equation reads
\begin{align}
\bar D_t s+ \operatorname{div}j_s &=  \frac{1}{T} \sigma \!: \!\nabla u - \frac{1}{T} j _s\! \cdot  \!\nabla T \nonumber \\
&= \frac{2 \mu }{T}\left( \operatorname{Def} u \right) ^{(0)} \!: \! \left( \operatorname{Def} u \right) ^{(0)} + \frac{\zeta}{T} (\operatorname{div}u) ^2  +  \frac{\kappa}{T ^2 } | \nabla T| ^2\geq 0, \label{entrop_prod} 
\end{align} 
where $\left( \operatorname{Def} u \right) ^{(0)}$ denotes the trace-free part of $\operatorname{Def} u$.
For simplicity, in this paper we will assume that these coefficients are constant; however, see \S\ref{enhancements}.

The discretization that we will develop preserves both the energy conservation \eqref{energy_cons} and the positivity of the rate of internal entropy production \eqref{entrop_prod}.

\subsection{Dirichlet boundary conditions}\label{Dirichlet_T}

We present here a modification of the variational formulation which allows the treatment of a fluid with prescribed temperature on the boundary $ \partial \Omega $. 

\paragraph{Lagrangian description.} Since in this case the fluid system is no longer adiabatically closed, 
the appropriate formulation is found by considering the continuum version of the variational formulation for finite dimensional open thermodynamic systems developed in \cite{GBYo2018b}. This amounts to replacing the constraints \eqref{KC_fluid} and \eqref{VC_fluid} by the kinematic and variational constraints
\begin{equation}
\begin{aligned}\label{KC_fluid_W}
\int_ \Omega W \frac{\delta  L }{\delta  S}\dot \Sigma \,{\rm d} X = - \int_ \Omega W \underbrace{(P: \nabla\dot \varphi)}_{\text{viscosity}}\, {\rm d} X +  \int_ \Omega W\!\!\!\!\underbrace{(J_S \cdot \nabla \dot\Gamma)}_{\text{heat conduction}}\!\!\!\! \, {\rm d} X \\ - \int_ { \partial \Omega } W \underbrace{(J_S \cdot n) (\dot \Gamma - \mathfrak{T} _0)}_{\text{heat transfer}}\!\, {\rm d} A,\;\forall\;W
\end{aligned}
\end{equation}
\begin{equation}
\begin{aligned}\label{VC_fluid_W}
\int_ \Omega W\frac{\delta   L}{\delta  S}\delta \Sigma \, {\rm d} X=  - \int_ \Omega W\underbrace{(P: \nabla \delta  \varphi)}_{\text{viscosity}} \,{\rm d} X +  \int_ \Omega W\!\!\!\!\underbrace{(J_S \cdot \nabla \delta \Gamma)}_{\text{heat conduction}}\!\!\!\!\, {\rm d} X \\ - \int_ { \partial \Omega } W \underbrace{(J_S \cdot n)  \delta  \Gamma}_{\text{heat transfer}} \!\,  {\rm d} A ,\;\forall\;W,
\end{aligned}
\end{equation} 
with $ \mathfrak{T}_0(X)$ the prescribed temperature on the boundary in the Lagrangian description.
When $J_S \cdot n=0$ they consistently recover \eqref{KC_fluid} and \eqref{VC_fluid}.

We refer to the \ref{A} for an application of this type of variational principle for an elementary finite dimensional thermodynamic system exchanging heat with the exterior, which helps the understanding of the new boundary term. In particular, \eqref{KC_fluid_W} and \eqref{VC_fluid_W} are continuum versions of the constraints \eqref{CK_simple_open} and \eqref{CV_simple_open} in \ref{A}.

\paragraph{Eulerian description.} Exactly as in \S\ref{subsec_insulated}, the variational principle can be converted to the Eulerian description, thereby yielding \eqref{VP_NSF_spatial}--\eqref{EP_constraints} with the constraints \eqref{KC_NSF_spatial} and \eqref{VC_NSF_spatial} replaced by
\begin{equation}
\begin{aligned}\label{KC_NSF_spatial_w_Dirichlet}
\int_ \Omega w\frac{\delta   \ell}{\delta   s} \bar D_t \varsigma  \,{\rm d} x =-  \int_ \Omega w(\sigma : \nabla u)\, {\rm d} x +   \int_ \Omega w (j_s \cdot \nabla D_t \gamma)\, {\rm d} x  \\ - \int_ { \partial \Omega } w(j_s \cdot n) \left( D_ t \gamma  -T_0 \right)\,  {\rm d} a,\;\forall\; w
\end{aligned}
\end{equation} 
\begin{equation}
\begin{aligned}\label{VC_NSF_spatial_w_Dirichlet}
\int_ \Omega w\frac{\delta   \ell}{ \delta  s}  \bar D_\delta  \varsigma\, {\rm d} x  =-\int_ \Omega w (\sigma : \nabla  v)\, {\rm d} x+  \int_ \Omega w (j_s \cdot \nabla D_ \delta \gamma)\, {\rm d} x  \\ - \int_ { \partial \Omega } w(j_s \cdot n) D_ \delta \gamma \,  {\rm d} a,\;\forall\;w.
\end{aligned}
\end{equation}
An application of the variational formulation \eqref{VP_NSF_spatial}-\eqref{KC_NSF_spatial_w_Dirichlet}-\eqref{VC_NSF_spatial_w_Dirichlet}-\eqref{EP_constraints} yields the equations \eqref{Equations_Eulerian} and \eqref{additional_conditions}, with the last equation of \eqref{additional_conditions} replaced by
\begin{equation}\label{Dirichlet_BC} 
(j_s \cdot n)( T-T_0)=0 \quad\text{on}\quad  \partial \Omega,
\end{equation} 
see \S\ref{B_2} for details.
With the boundary condition \eqref{Dirichlet_BC}, the energy balance \eqref{energy_cons} is modified as
\begin{equation}\label{energy_cons_modified} 
\frac{d}{dt} \mathcal{E} = \int_ \Omega \operatorname{div}\Big( \rho  \frac{\delta \ell}{\delta \rho  } u + s  \frac{\delta \ell}{\delta s  }u + \sigma \cdot u + j_s \frac{\delta \ell}{\delta s} \Big) {\rm d} x=- \int_{ \partial \Omega } (j_s \cdot n)T_0 \, {\rm d} a.
\end{equation}

\begin{remark}[Prescribed heat flux]\label{PHF}\rm It is also possible to treat the boundary condition
\[
j_q \cdot n = q_0
\]
where $j_q=Tj_s$ is the heat flux and for some given function $q_0: \partial \Omega \rightarrow \mathbb{R} $, which may itself depend on the boundary temperature as $q_0(T)$. This is achieved by replacing the last term in \eqref{KC_NSF_spatial_w_Dirichlet} with
\begin{equation}\label{modif_presc_flux} 
- \int_ { \partial \Omega } w\big( (j_s \cdot n) D_ t \gamma  - q_0 \big)\, {\rm d} a.
\end{equation} 
The variational constraint \eqref{VC_NSF_spatial_w_Dirichlet} is kept unchanged, as it follows from the general variational framework for open systems, see \ref{A}. The energy balance now reads
\[
\frac{d}{dt} \mathcal{E}= - \int_{ \partial \Omega } q_0 \, {\rm d} a.
\]
\end{remark}

\subsection{Associated weak formulation}\label{weak_form_continuous}

The variational derivation presented above yields a weak formulation of the equations and boundary conditions, that will be shown to have a discrete version.

In the absence of irreversible processes, this weak formulation has been derived in \cite{GaGB2020} and is based on the trilinear forms $a : L^\infty(\Omega)^d \times H^1(\Omega)^d \times H^1(\Omega)^d \to \mathbb{R}$ and $b : L^2(\Omega) \times H^1(\Omega) \times L^\infty(\Omega)^d \to \mathbb{R}$ defined by
\begin{align*} 
a(w,u,v)&= -\int_ \Omega w \cdot [u,v] \,{\rm d} x\\
b( f, \rho  , v)&= - \int_ \Omega \rho  \nabla f \cdot v \,{\rm d} x.
\end{align*}

For the treatment of the irreversible part, we restrict to the expressions $ \sigma = \sigma (u)$ and $ j_s=j_s(T)$ given in \eqref{phenom_relations}. For viscosity, we define the trilinear form $c : L^\infty(\Omega) \times H^1(\Omega)^d \times H^1(\Omega)^d \rightarrow \mathbb{R}$ by
\begin{equation}\label{def_c} 
c(w,u,v) = \int_\Omega w \, \sigma (u) : \nabla v \, {\rm d}x.
\end{equation}
For heat conduction, we set $F = \{f \in H^1(\Omega) \mid 1/f \in L^\infty(\Omega), \, \dv(\nabla f/f) \in L^2(\Omega)\}$ and define $d : W^{1,\infty}(\Omega) \times F \times H^1(\Omega) \rightarrow \mathbb{R}$ by
\begin{equation}\label{d}
d(w,f,g)=\left\{
\begin{array}{ll}
\vspace{0.2cm}\displaystyle\int_\Omega w \, j_s(f) \cdot \nabla g \, {\rm d}x & \;\text{for homogeneous Neumann}\\
\displaystyle \int_\Omega w \, j_s(f) \cdot \nabla g \, {\rm d}x - \int_{\partial\Omega} w j_s(f) \cdot n g \, {\rm d}a & \;\parbox{15em}{ for Dirichlet and \\nonhomogeneous Neumann}
\end{array}
\right. 
\end{equation} 
and $e :  W^{1,\infty}(\Omega) \times F \rightarrow \mathbb{R}$ by
\begin{equation}\label{e}
e(w,f)=\left\{
\begin{array}{ll}
\vspace{0.2cm}\displaystyle 0 & \text{for homogeneous Neumann}\\
\vspace{-0.4cm}\displaystyle \int_{\partial\Omega} w j_s(f) \cdot n T_0 \, {\rm d}a & \text{for Dirichlet}\\
\\
\displaystyle  \int_{\partial\Omega} w q_0(f) \, {\rm d}a &  \text{for nonhomogeneous Neumann}
\end{array}
\right. 
\end{equation} 
so that the constraints \eqref{KC_NSF_spatial}--\eqref{VC_NSF_spatial} and \eqref{KC_NSF_spatial_w_Dirichlet}--\eqref{VC_NSF_spatial_w_Dirichlet} (including the modified version in \eqref{modif_presc_flux}) can be written in a unified way for all boundary conditions as
\begin{equation}\label{unified_writing_KC} 
\Big\langle  w,\frac{\delta   \ell}{\delta   s} \bar D_t \varsigma \Big\rangle =- c(w,u,u) + d\Big( w, - \frac{\delta \ell}{\delta s}, D_t \gamma \Big)+ e\Big(w, - \frac{\delta \ell}{\delta s} \Big), \quad \forall w
\end{equation}
and
\begin{equation}\label{unified_writing_VC}  
\Big\langle w, \frac{\delta   \ell}{ \delta  s}  \bar D_\delta  \varsigma \Big\rangle =- c(w,u, v ) + d\Big( w, - \frac{\delta \ell}{\delta s}, D_\delta  \gamma \Big), \quad \forall w,
\end{equation} 
with $ \left\langle \cdot , \cdot \right\rangle $ the $L^2$ inner product. Note that the condition $\dv(\nabla f / f) \in L^2(\Omega)$ was included in the definition of $F$ so that the term $-\int_{\partial\Omega} w j_s(f) \cdot n g \, {\rm d}a$ can be given meaning using the identity
\[
\int_{\partial\Omega} w \frac{1}{f} \nabla f \cdot n g \, {\rm d}a =  \int_\Omega \nabla w \cdot \frac{1}{f} (\nabla f) g \, {\rm d}x + \int_\Omega w \dv\left(\frac{1}{f} \nabla f\right) g \,{\rm d}x + \int_\Omega w \frac{1}{f} \nabla f \cdot \nabla g \, {\rm d}x.
\]
This condition can be omitted from the definition of $F$ in the homogeneous Neumann setting. Note also that we really have two ways to impose homogeneous Neumann boundary conditions: by choosing $d$ and $e$ as indicated above, or by using the ``nonhomogeneous Neumann'' $d$ and $e$ with $q_0=0$.  Both approaches lead to the same equations of motion, but the first approach is slightly simpler, and it simplifies the statement of the second law of thermodynamics below.  Therefore we prefer to treat it separately.

Importantly, by using these notations when carrying out the variational formulations \eqref{VP_NSF_spatial}--\eqref{EP_constraints} and \eqref{VP_NSF_spatial}-\eqref{KC_NSF_spatial_w_Dirichlet}-\eqref{VC_NSF_spatial_w_Dirichlet}-\eqref{EP_constraints}, we get the equations \eqref{Equations_Eulerian}, together with the boundary conditions $j_s \cdot n=0$ or $(j_s \cdot n)(T-T_0)=0$ in the weak form
\begin{equation}\label{good_weak_form}
\left\{ 
\begin{array}{l}
\displaystyle\vspace{0.2cm}\left\langle \partial _t \frac{\delta \ell}{\delta u}, v  \right\rangle + a \Big( \frac{\delta \ell}{\delta u} , u, v \Big) + b\Big( \frac{\delta \ell}{\delta \rho  } , \rho  , v \Big) +b\Big( \frac{\delta \ell}{\delta s  } , s  , v \Big) =- c(1,u,v),\;\forall\,v\\
\displaystyle\vspace{0.2cm} - \left\langle  \partial _t  s, \frac{\delta \ell}{\delta s}w \right\rangle - b \Big( \frac{\delta \ell}{\delta s}w, s, u\Big) + d \Big( 1, -\frac{\delta \ell}{\delta s}, \frac{\delta \ell}{\delta s} w\Big)\\
\displaystyle\vspace{0.2cm} \hspace{2cm}= c(w, u,u ) + d\Big( w, -\frac{\delta \ell}{\delta s}, \frac{\delta \ell}{\delta s}\Big)- e\Big(w, - \frac{\delta \ell}{\delta s} \Big),\;\forall\,w\\
\displaystyle\left\langle \partial _t \rho  , \theta  \right\rangle + b(\theta , \rho  , u)=0,\;\forall\,\theta,
\end{array}
\right.
\end{equation} 
see \S\ref{B_3} for details. Note in particular the very specific form of the two terms involving $d( \cdot , \cdot , \cdot )$ in the weak form of the entropy equation, which plays a crucial role in our discretization.

With these notations, the energy balance follows as
\begin{equation}\label{energy_bal}
\begin{aligned}
\frac{d}{dt} \mathcal{E} &= \left\langle \partial _t \frac{\delta \ell}{\delta u} , u \right\rangle - \left\langle \frac{\delta \ell}{\delta \rho  }, \partial _t \rho  \right\rangle - \left\langle \frac{\delta \ell}{\delta s  }, \partial _t s  \right\rangle \\
&= -a \Big( \frac{\delta \ell}{\delta u} , u, u \Big) - b\Big( \frac{\delta \ell}{\delta \rho  } , \rho  , u \Big) -b\Big( \frac{\delta \ell}{\delta s  } , s  , u \Big) - c(1,u,u)\\
& \qquad + b\Big( \frac{\delta \ell}{\delta \rho  } , \rho  , u \Big) + b \Big( \frac{\delta \ell}{\delta s}, s, u\Big) - d \Big( 1,- \frac{\delta \ell}{\delta s}, \frac{\delta \ell}{\delta s} \Big)\\
&\qquad+ c(1, u,u ) + d\Big( 1, -\frac{\delta \ell}{\delta s}, \frac{\delta \ell}{\delta s}\Big)- e\Big(1, - \frac{\delta \ell}{\delta s} \Big)=-e\Big(1, - \frac{\delta \ell}{\delta s} \Big)
\end{aligned}
\end{equation}  
from the property
\[
a(w,u,v) =- a(w,v,u), \;\forall\,u,v,w
\]
of the trilinear form $a$.

Conservation of total mass $\int_ \Omega \rho \, {\rm d} x$ follows from the property
\[
b(1, \rho  , v)=0,\;\;\forall \rho  , v.
\]

Regarding entropy production, we have the inequalities
$\sigma(u) \!: \!\nabla u \geq 0$ and $- j _s(T)\! \cdot  \!\nabla T \geq 0$ for all $u$ and all $T>0$, consistently with the second law of thermodynamics written in \eqref{2nd_law_continuum}. In terms of $c$, the first condition is equivalently written as
\begin{equation}\label{positivity_c} 
c(w, u, u) \geq 0, \;\;  \forall\,u, \;\; \forall\, w\geq 0.
\end{equation} 
For homogeneous Neumann boundary conditions, the second condition is equivalently written as
\begin{equation}\label{positivity_d_N} 
d(w, f, f)\leq 0,\;\; \forall\,f>0,\;\; \forall\, w\geq 0,
\end{equation} 
while for Dirichlet and nonhomogeneous Neumann boundary conditions, it can be equivalently written as
\begin{equation}\label{positivity_d_D}
d(w, f, f)\leq 0,\;\; \forall\,f>0,\;\; \forall\, \text{$w\geq 0$, $w$ with compact support in the interior of $ \Omega $},
\end{equation}
where we recall that the expression for $d$ depends on the boundary condition used.
Discrete versions of \eqref{positivity_c}--\eqref{positivity_d_D} will be shown to hold in the discrete case.

Finally, the second law \eqref{2nd_law_continuum} can be equivalently written by using $b$ and $d$ as
\begin{equation}\label{2nd_law_continuum_b_d} 
\left\langle \partial _t s, Tw \right\rangle +b(Tw, s, u) - d(1, T, Tw)\geq 0
\end{equation}
for all $w\geq 0$ with compact support in $ \Omega $.

\section{Structure preserving variational discretization}\label{Sec_discrete}

The structure preserving finite element integrator is obtained by developing a discrete version of the variational formulation presented above. In particular, exactly as in the continuous case, the discrete variational formulation of the Eulerian form of the equation is inherited by a variational formulation extending Hamilton's principle in the Lagrangian description.

This is achieved thanks to the introduction of a discrete version $G_h$ of the diffeomorphism group $ \operatorname{Diff}( \Omega )$ of fluid motion, acting on discrete functions and densities. We shall follow the approach developed for compressible fluids in \cite{GaGB2020} based on the earlier works \cite{PaMuToKaMaDe2010,GaMuPaMaDe2011,DeGaGBZe2014,NaCo2018,BaGB2019}.
We refer to \cite{CoGB2022} for another approach to the variational discretization of \eqref{VP_NSF_spatial}--\eqref{EP_constraints} for heat conducting viscous fluids, also based on a discrete version of the diffeomorphism group.

\subsection{Discrete setting}

Let $ \mathcal{T} _h$ be a triangulation of $ \Omega $. We regard $ \mathcal{T} _h$ as a member of a family of triangulations parametrized by $h=\max_{K \in \mathcal{T} _h}h_K$, where $h_K= \operatorname{diam} K$ denotes the diameter of a simplex $K$. We assume that this family is shape-regular, meaning that the ratio $\max_{K \in \mathcal{T} _h}h_K/ \rho  _K$ is bounded above by a positive constant for all $h > 0$. Here, $\rho  _K$ denotes the inradius of $K$.

We shall discretize functions with the discontinuous Galerkin space
\[
V_h=DG_q( \mathcal{T} _h):=\{f \in L^2( \Omega ) \mid f|_K \in P_q(K), \;\forall\, K \in \mathcal{T} _h\}.
\]
Considering the discrete diffeomorphism group as a certain subgroup  $G_h \subset GL(V_h)$, the action $f \in V_h \mapsto  gf \in V_h$, $ g \in G_h$ is understood as a discrete analog of the action $f \in \mathcal{F} ( \Omega ) \mapsto f \circ \varphi ^{-1}  \in \mathcal{F} ( \Omega )$, $ \varphi \in \operatorname{Diff}( \Omega )$, of diffeomorphisms on functions. The discrete analog of the action $ \rho  \in \mathcal{F} ( \Omega ) ^*\mapsto (\rho  \circ \varphi )J \varphi \in \mathcal{F} ( \Omega ) ^* $ on densities, written as $ \rho  \in V_h \mapsto \rho  \cdot g \in V_h$, is defined by $L^2$ duality as in the continuous case
\begin{equation}\label{def_dual} 
\left\langle \rho \cdot g , f \right\rangle = \left\langle \rho , g f \right\rangle , \quad \forall f, \rho   \in V_h.
\end{equation}
In particular, the Lagrange-to-Euler relations \eqref{Lagr_to_Eul_1} have the discrete analog
\begin{equation}\label{Lagr_to_Eul_1_discrete} 
\begin{aligned} 
A&= \dot  g g ^{-1} , \qquad \rho  = \varrho _0 \cdot g ^{-1} \in V_h , \qquad s= S \cdot g ^{-1} \in V_h,\\
\varsigma&=\Sigma \cdot g^{-1} \in V_h, \qquad \gamma = g \Gamma  \in V_h.
\end{aligned} 
\end{equation}

The Lie algebra $ \mathfrak{g} _h \subset L(V_h,V_h)$ of $G_h$ acts on discrete functions and densities as $f \in V_h \mapsto fA \in V_h$ and $ \rho  \in V_h \mapsto \rho  \cdot A \in V_h$, which satisfy
\begin{equation}\label{discrete_action_rho} 
\left\langle \rho   \cdot A, f \right\rangle = \left\langle \rho  , A f \right\rangle , \quad \forall f, \rho   \in V_h.
\end{equation} 
As shown in \cite{GaGB2020}, the realization of elements of this Lie algebra as discrete vector fields is obtained by associating to each $u \in H_0(\operatorname{div}, \Omega ) \cap L^p( \Omega )^d$ (with $p>2$) the Lie algebra element $A_u \in \mathfrak{g} _h$ defined by
\[
\left\langle A_u f, g \right\rangle := -\sum_{K \in \mathcal{T} _h}\int_K( \nabla _uf)g\, {\rm d} x + \sum_{e \in \mathcal{E} ^0_h}\int_e u \cdot \llbracket f \rrbracket \{g\} \, {\rm d} a, \quad \forall f, g   \in V_h,
\]
which yields a consistent approximation of the distributional derivative in the direction $u$. 
Moreover, the linear map $u \in H_0(\operatorname{div}, \Omega ) \cap L^p( \Omega )^d\mapsto A_u \in \mathfrak{g} _h$ becomes injective on the Raviart-Thomas finite element space $RT_{2q}( \mathcal{T} _h)$, see Prop. 3.4 in \cite{GaGB2020}. Above, we used the notation $\llbracket f \rrbracket$ and $\{ f \}$ for the jump and average of a scalar function $f$ across $e = K_1 \cap K_2 \in \mathcal{E}_h^0$, which are defined by
\[
\llbracket f \rrbracket = f_1 n_1 + f_2 n_2, \quad \{ f \} = \frac{1}{2}(f_1 + f_2).
\]
Here, $f_i = \left.f\right|_{K_i}$ and $n_i$ is the unit normal vector to $e$ pointing outward from $K_i$.  Later we will also apply $\{\cdot\}$ to vector fields and interpret it componentwise.

This setting is used in \cite{GaGB2020} to develop a finite element variational discretization of compressible fluids, by writing the  analog to the Hamilton principle \eqref{HP_fluid} on the discrete diffeomorphism group $G_h$ and using the first three relations \eqref{Lagr_to_Eul_1_discrete} to deduce its Eulerian version. The discrete Lagrangian $\ell_d: \mathfrak{g} _h \times V_h \times V_h \rightarrow \mathbb{R} $ can be defined from a given continuous Lagrangian $\ell$ as
\[
\ell_d(A, \rho  , s):=\ell(\widehat{A}, \rho  , s)
\]
thanks to the Lie algebra-to-vector field map $ A \in \mathfrak{g}_h \mapsto \widehat{A} \in [V_h]^d$ defined by
\[
\widehat{A}:= -\sum_{k=1}^d A ( \pi _h(x^k)) e_k,
\]
where $x^k : \Omega \to \mathbb{R}$ is the $k$th coordinate function, $\{e_k\}_{k=1}^d$ is the standard basis for $\mathbb{R}^d$, and $ \pi _h:L^2( \Omega ) \rightarrow V_h$ the $L^2$-orthogonal projector, which we interpret componentwise when applied to a vector field.  It satisfies $\widehat{A_u}=  \pi _h(u)$. 

\subsection{Discrete variational formulation}

We discretize the velocity by using the continuous Galerkin space 
\[
U_h^{\rm grad}=CG_r( \mathcal{T} _h)^d:=\{u \in H^1_0( \Omega )^d\mid u|_K \in P_r(K)^d,\;\forall\, K \in \mathcal{T} _h\}.
\]
Assuming $r\leq q$ (see Remark \ref{remark_q_r} for $r>q$), we denote by $ \Delta _h \subset \mathfrak{g} _h$ the subspace corresponding to $U_h^{\rm grad}$ via the injective map $u \mapsto A_u$.  We denote by $\pi_h^{\rm grad} : L^2(\Omega)^d \to U_h^{\rm grad}$ the $L^2$-orthogonal projector onto $U_h^{\rm grad}$.

Consider discretizations of $d$ and $e$ given as $d_h:V_h \times V_h \times V_h \rightarrow \mathbb{R} $ and $e_h: V_h \times V_h \rightarrow \mathbb{R}$. For now, we only suppose that $d_h$ is linear in its first and third arguments, and $e_h$ is linear in its first argument. The explicit form and their properties are stated later.

With this setting, the discrete version of the variational formulation \eqref{VP_NSF_spatial}-\eqref{unified_writing_KC}-\eqref{unified_writing_VC}-\eqref{EP_constraints} reads as follows. Find $A :[t_0,t_1] \rightarrow  \Delta _h$ and $ \rho  , s  , \varsigma , \gamma :[t_0,t_1] \rightarrow  V_h$ which are critical for the \textit{variational condition}
\begin{equation} \label{VP_NSF_spatial_discrete}
\delta \int_{t_0}^{t_1} \Big[ \ell_d(A,\rho,s) + \langle s- \varsigma, D_t^h \gamma  \rangle \Big] \, {\rm d}t = 0,
\end{equation}
with the \textit{phenomenological constraint} and \textit{variational constraint} given by
\begin{equation} \label{KC_NSF_spatial_discrete}
\!\!\!\left\langle \frac{\delta  \ell_d}{\delta s} \bar D_t^h \varsigma, w \right\rangle = - c(w, \widehat{A},\widehat{A}) + d_h\left(w , -\frac{\delta \ell_d}{\delta s}, D_t^h \gamma  \right) + e_h \left( w, -\frac{\delta \ell_d}{\delta s}\right) ,  \forall w \in V_h,
\end{equation}
\begin{equation} \label{VC_NSF_spatial_discrete}
\left\langle \frac{\delta \ell_d}{\delta s}  \bar D_\delta ^h\varsigma, w \right\rangle = - c(w, \widehat{A},\widehat{B}) + d_h\left(w , -\frac{\delta \ell_d}{\delta s},D_ \delta ^h \gamma \right),  \forall w \in V_h,
\end{equation}
and with Euler-Poincar\'e variations
\begin{align}
\delta A &= \partial_t B + [B,A], \label{discrete_EP1} \\
\delta \rho &= -\rho \cdot B, \label{discrete_EP2}
\end{align}
for $B (t) $ an arbitrary curve in $\Delta_h$ with $B(0)=B(T)=0$, and with $\delta\gamma|_{t=0,T}=0$.

Above we have defined the discrete analogs to the Lagrangian time derivatives and variations considered in \eqref{Lagr_derivatives} as 
\begin{equation}\label{Lagr_derivatives_discrete} 
\begin{aligned} 
D_t^hf &= \partial _t f -  A f  &  & \quad D_\delta^h  f = \delta f  - B f\\
\bar D_t^h \rho  &= \partial _t \rho  + \rho   \cdot A& &\quad  \bar D_\delta ^h \rho  = \delta \rho    + \rho   \cdot B.
\end{aligned}
\end{equation} 

In \eqref{KC_NSF_spatial_discrete} and \eqref{VC_NSF_spatial_discrete}, the partial derivative $ \frac{\delta \ell_d}{\delta s} \in V_h$ is defined exactly as in the continuous case, with respect to the $L^2$ duality pairing on $V_h$.

\begin{proposition} The equations of motion that result from \eqref{VP_NSF_spatial_discrete}--\eqref{VC_NSF_spatial_discrete} and from the definition $ \rho  = \varrho _{0} \cdot g ^{-1} $ are
\begin{equation}\label{Eulerian_multi_discrete} 
\left\{
\begin{array}{l}
\vspace{0.2cm}\displaystyle
\left\langle  \partial_t \frac{\delta  \ell}{\delta u}, \vv  \right\rangle  + a_h\Big( \pi _h \frac{\delta \ell}{\delta u}, u , \vv \Big)  + b_h \Big( \pi _h \frac{\delta \ell}{\delta \rho   }, \rho , \vv \Big) \\
\displaystyle\vspace{0.2cm} \hspace{4cm}+ b_h \Big( \pi _h \frac{\delta \ell}{\delta s   }, s , \vv \Big) =- c(1, u, \vv ), \;\; \forall\, \vv \in U_h^{\rm grad},\\
\vspace{0.2cm}\displaystyle \left\langle \partial _t \rho  , \theta \right\rangle +b_h( \theta , \rho  , u)=0, \;\; \forall \,\theta \in V_h,\\
\vspace{0.2cm}\displaystyle - \left\langle  \partial _t  s, \pi_h \Big(\frac{\delta \ell}{\delta s}\Big)w  \right\rangle - b_h \Big( \pi _h \Big( \pi_h \Big(\frac{\delta \ell}{\delta s} \Big) w \Big), s, u\Big)+ d_h \Big( 1, -\pi_h \frac{\delta  \ell}{\delta  s}, \pi _h \Big( \pi_h \Big( \frac{\delta  \ell}{\delta  s} \Big) w \Big) \Big)\\
\displaystyle\vspace{0.2cm} \hspace{2cm}=  c(w,u,u) +d_h \Big( w,-\pi_h \frac{\delta \ell}{\delta s} , \pi_h \frac{\delta \ell}{\partial s}  \Big)- e_h\Big(w, - \pi_h  \frac{\delta \ell}{\delta s} \Big), \;\; \forall \,w \in V_h,
\end{array}
\right.
\end{equation}
where
\begin{align} 
a_h(w, u,v)&=- \int_ \Omega \widehat{A_w} \cdot \widehat{[A_u, A_v]} \,{\rm d} x=- \int_ \Omega w \cdot [u,v]\, {\rm d} x\label{def_ah}\\
b_h(f, \rho  , u)&= \langle \rho  , A_u f  \rangle = - \sum_{K \in \mathcal{T} _h}\int_K( \nabla _uf) \rho\, {\rm d} x + \sum_{e \in \mathcal{E} ^0_h}\int_e u \cdot \llbracket f \rrbracket \{\rho\} \, {\rm d} a\label{def_bh}.
\end{align}
The variational principle \eqref{VP_NSF_spatial_discrete}--\eqref{VC_NSF_spatial_discrete}  also yields the conditions
\begin{equation}\label{additional_conditions_discrete} 
D^h_t \gamma = - \pi _h \frac{\delta  \ell}{\delta  s}, \qquad \Big\langle \bar D_t^h (\varsigma  - s ), \delta \gamma  \Big\rangle =- d_h \Big( 1, -\pi _h\frac{\delta  \ell}{\delta  s}, \delta \gamma  \Big),\;\forall\, \delta \gamma  \in V_h.
\end{equation} 
\end{proposition}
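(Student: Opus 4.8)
The plan is to take variations in the discrete variational principle \eqref{VP_NSF_spatial_discrete} subject to the constraints, exactly mirroring the continuous computation in \S\ref{B_3}, but now working on the finite-dimensional spaces $\Delta_h$ and $V_h$. First I would expand $\delta \int_{t_0}^{t_1}[\ell_d(A,\rho,s)+\langle s-\varsigma, D_t^h\gamma\rangle]\,{\rm d}t = 0$, collecting the terms that multiply each of the independent infinitesimal variations. The variation of $\ell_d$ produces $\langle \frac{\delta\ell_d}{\delta A},\delta A\rangle + \langle \frac{\delta\ell_d}{\delta\rho},\delta\rho\rangle + \langle\frac{\delta\ell_d}{\delta s},\delta s\rangle$; substituting the Euler--Poincar\'e variations \eqref{discrete_EP1}--\eqref{discrete_EP2} for $\delta A$ and $\delta\rho$, integrating by parts in time to move $\partial_t$ off $B$, and using the duality defining $\rho\cdot B$ and the Lie-algebra action, converts the $\delta A,\delta\rho$ contributions into an expression paired against $B$. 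One then uses $\frac{\delta\ell_d}{\delta A}=\pi_h\frac{\delta\ell}{\delta u}$ (through the chain rule for $\ell_d(A,\rho,s)=\ell(\widehat A,\rho,s)$ and the identity $\widehat{A_u}=\pi_h u$) together with the definitions \eqref{def_ah}--\eqref{def_bh} of $a_h,b_h$ to recognize the momentum equation; the term $-c(1,u,\vv)$ appears because the variation $\delta s$ is not free but is constrained via \eqref{VC_NSF_spatial_discrete}, which feeds the viscous forcing into the velocity equation through the coupling $\langle s,D_t^h\gamma\rangle$ and $D_t^h\gamma=-\pi_h\frac{\delta\ell}{\delta s}$.

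Next I would handle the $\gamma$, $s$, and $\varsigma$ variations. Taking $\delta\gamma$ arbitrary (vanishing at the endpoints) in $\langle s-\varsigma,D_t^h\gamma\rangle$ and integrating by parts in time using $D_t^h\gamma=\partial_t\gamma-A\gamma$ and the adjoint relation for the $\varsigma\cdot A$, $s\cdot A$ actions gives $\langle \bar D_t^h(\varsigma-s),\delta\gamma\rangle$ paired against the variational-constraint term $d_h(1,-\pi_h\frac{\delta\ell}{\delta s},\delta\gamma)$; this yields the second equation in \eqref{additional_conditions_discrete}. Taking the free variation $\delta\varsigma$ only appears through $\langle s-\varsigma,D_t^h\gamma\rangle$ and the constraint \eqref{VC_NSF_spatial_discrete}; comparing the term $-\langle D_t^h\gamma,\delta\varsigma\rangle$ with the $\bar D_\delta^h\varsigma$ term in \eqref{VC_NSF_spatial_discrete} (written out as $\delta\varsigma+\varsigma\cdot B$) forces $D_t^h\gamma=-\pi_h\frac{\delta\ell}{\delta s}$, the first equation in \eqref{additional_conditions_discrete}; this is the discrete analogue of the step in \S\ref{B_1}. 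Finally, the variation $\delta s$, which is constrained by \eqref{VC_NSF_spatial_discrete}, is the one that produces the entropy equation: one solves \eqref{VC_NSF_spatial_discrete} for the effect of $\delta s$ (modulo the $\bar D_\delta^h$ and $B$ pieces already accounted for), substitutes into the remaining terms $\langle\frac{\delta\ell}{\delta s},\delta s\rangle + \langle -\delta\varsigma,D_t^h\gamma\rangle + \langle\delta s,D_t^h\gamma\rangle$, and uses the phenomenological constraint \eqref{KC_NSF_spatial_discrete} to eliminate $\bar D_t^h\varsigma$. The projector $\pi_h$ appears repeatedly because $\frac{\delta\ell_d}{\delta s}=\pi_h\frac{\delta\ell}{\delta s}$ only after projecting onto $V_h$, and the bookkeeping of which quantities are $\pi_h$-projected (nested as $\pi_h(\pi_h(\frac{\delta\ell}{\delta s})w)$ in the $b_h$ and $d_h$ terms) is what distinguishes the discrete entropy equation in \eqref{Eulerian_multi_discrete} from its continuous counterpart.

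The main obstacle is the careful, simultaneous treatment of the four coupled constrained variations $\delta A$ (equivalently $B$), $\delta\rho$, $\delta s$, $\delta\varsigma$, $\delta\gamma$: unlike an unconstrained Hamilton principle, here $\delta s$ and $\delta\varsigma$ are tied together by the variational constraint \eqref{VC_NSF_spatial_discrete} in a way that threads the viscous and heat-conduction forcing simultaneously into the momentum equation (the $-c(1,u,\vv)$ term) and into the entropy equation (the $c(w,u,u)$ and $d_h$ terms). One must organize the computation so that the coefficient of $B$ gives the momentum equation, the coefficient of the free $\delta\gamma$ gives the second identity in \eqref{additional_conditions_discrete}, the free $\delta\varsigma$ gives the first, and the constrained $\delta s$ --- after substituting the variational constraint and then using the phenomenological constraint --- gives the entropy equation; the discrete duality identities $\langle\rho\cdot B,f\rangle=\langle\rho,Bf\rangle$, $\langle A_u f,g\rangle=\langle\rho,\dots\rangle$ from \eqref{def_dual}, \eqref{discrete_action_rho}, and $\widehat{A_u}=\pi_h u$ are the algebraic glue. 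Once this organization is set up, each individual step is a routine finite-dimensional integration by parts in time together with an application of the relevant duality relation; the subtlety is entirely in the structure, not the computation, and it parallels line-by-line the continuous derivation cited as \S\ref{B_3}, with $\pi_h$ inserted wherever an element of $V_h$ must be produced from an object that a priori lives in $L^2(\Omega)$.
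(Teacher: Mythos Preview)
Your proposal contains a genuine structural error: you have reversed the roles of $\delta s$ and $\delta\varsigma$. In the variational principle \eqref{VP_NSF_spatial_discrete}--\eqref{VC_NSF_spatial_discrete}, it is $\delta\varsigma$ that is constrained by \eqref{VC_NSF_spatial_discrete} (through $\bar D_\delta^h\varsigma=\delta\varsigma+\varsigma\cdot B$), while $\delta s$ is completely free. The paper's proof proceeds accordingly: the free variation $\delta s$ appears only in $\langle\frac{\delta\ell_d}{\delta s},\delta s\rangle+\langle\delta s,D_t^h\gamma\rangle$, and its arbitrariness immediately yields $D_t^h\gamma=-\frac{\delta\ell_d}{\delta s}$, the first condition in \eqref{additional_conditions_discrete}. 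Your claim that this condition comes from a ``free $\delta\varsigma$'' cannot work, because $\delta\varsigma$ is not free---it is determined by $B$ and $\delta\gamma$ through the variational constraint.

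A second, related error is your assertion that the entropy equation arises from the constrained variation $\delta s$. It does not arise from any variation at all. The entropy equation is obtained directly from the \emph{phenomenological constraint} \eqref{KC_NSF_spatial_discrete}, which is a condition imposed on the solution curve itself (not on the variations). One substitutes $D_t^h\gamma=-\frac{\delta\ell_d}{\delta s}$ into \eqref{KC_NSF_spatial_discrete} and then uses the already-derived identity $\langle\bar D_t^h(\varsigma-s),\delta\gamma\rangle=-d_h(1,-\frac{\delta\ell_d}{\delta s},\delta\gamma)$ with $\delta\gamma=\pi_h(\frac{\delta\ell_d}{\delta s}\,w)$ to convert $\bar D_t^h\varsigma$ into $\bar D_t^h s$; this is precisely where the nested projection $\pi_h(\pi_h(\frac{\delta\ell}{\delta s})w)$ appears. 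The variational constraint \eqref{VC_NSF_spatial_discrete}, by contrast, is used (with $w=1$) to eliminate $\delta\varsigma$ from the action variation; this is what injects $-c(1,u,v)$ into the momentum equation and $d_h(1,-\frac{\delta\ell_d}{\delta s},D_\delta^h\gamma)$ into the $\delta\gamma$ balance. Until you correct which variable is free and recognize that the entropy equation is the phenomenological constraint rewritten, the bookkeeping you describe will not close.
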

\begin{remark}\label{remark_q_r}
The second equality in~(\ref{def_ah}) is valid if $r \le q$.  If $r>q$, then we can still arrive at the same scheme by considering a discrete diffeomorphism group $G_h \subset GL(DG_r(\mathcal{T}_h))$ and treating $\rho,s,\varsigma,\gamma$ as elements of $DG_q(\mathcal{T}_h) \subset DG_r(\mathcal{T}_h)$; see Remark 5.1 in \cite{GaGB2020}.  In both cases, the fact that our discrete velocity field is continuous is important.  In the absence of continuity, the formula~(\ref{def_ah}) contains additional terms involving jumps across codimension-1 faces; see Proposition 4.3 in \cite{GaGB2020}.  The absence of these jump terms renders $a_h$ independent of $h$, so we drop the subscript $h$ in what follows. 
\end{remark}

\begin{remark}
Note that \eqref{def_bh} is a standard discontinuous Galerkin discretization of the scalar advection operator, see \cite{BrMaSu2004}.
\end{remark}

\begin{proof} Taking the variations in \eqref{VP_NSF_spatial_discrete} yields
\begin{equation}
\begin{aligned} 
&\int_{t_0}^{t_1}  \Big[ \Big\langle\!\!\Big\langle \frac{\delta \ell_d}{\delta A} , \delta A \Big\rangle\!\!\Big\rangle + \Big\langle \frac{\delta \ell_d}{\delta \rho  } ,\delta   \rho \Big\rangle   +\Big\langle  \frac{\delta \ell_d}{\delta s} ,\delta s \Big\rangle  + \Big\langle \delta s,  D_t^h \gamma \Big\rangle  - \Big\langle \delta \varsigma,  D_t^h \gamma \Big\rangle  \\
& \qquad \qquad \qquad  -  \Big\langle \bar D_t^h(s- \varsigma ) ,\delta \gamma \Big\rangle  - \Big\langle s- \varsigma , \delta A  \gamma  \Big\rangle \Big] {\rm d}t=0,
\end{aligned} 
\end{equation}
where $\big\langle\!\big\langle \cdot, \cdot \big\rangle\!\big\rangle : \mathfrak{g}_h^* \times \mathfrak{g}_h \to \mathbb{R}$ denotes the duality pairing on $\mathfrak{g}_h^* \times \mathfrak{g}_h$.
We used $\int_{t_0}^{t_1}  \left\langle s - \varsigma , D_t^ h \delta \gamma  \right\rangle {\rm d} t= -\int_{t_0}^{t_1}  \left\langle \bar D_t^h (s - \varsigma ), \delta \gamma  \right\rangle {\rm d}t $ which follows from \eqref{discrete_action_rho}, \eqref{Lagr_derivatives_discrete}, and $ \delta {\gamma}| _{t=t_0,t_1}=0$.
Since $ \delta s $ is arbitrary in $V_h$, we get the condition
\begin{equation}\label{D_t_gamma_h} 
D_t^h \gamma = - \frac{\delta \ell_d}{\delta s}.
\end{equation} 
Making use of this condition and using \eqref{VC_NSF_spatial_discrete} yields 
\begin{align*}
&\int_{t_0}^{t_1}   \Big[\Big\langle\!\!\Big\langle \frac{\delta \ell_d}{\delta A} , \delta A \Big\rangle\!\!\Big\rangle + \Big\langle \frac{\delta \ell_d}{\delta \rho } , \delta \rho \Big\rangle  - c(1, \widehat{A}, \widehat{B}) + d_ h \Big( 1, -\frac{\delta \ell_d}{\delta s}, D_ \delta ^h \gamma  \Big) \\
&\qquad \qquad \qquad \qquad   - \Big\langle \frac{\delta \ell_d}{\delta s}, \varsigma \cdot B \Big\rangle - \Big\langle   \bar D_{ t}^{ h}(s- \varsigma ) , \delta \gamma \Big\rangle  - \Big\langle s- \varsigma , \delta A  \gamma  \Big\rangle \Big] {\rm d}t=0.
\end{align*}
Using that $ \delta \gamma $ is arbitrary and independent of the other variations, we get
\begin{equation}\label{D_t_s_h} 
\Big\langle \bar D_t^h ( \varsigma -s), \delta \gamma  \Big\rangle = - d_h \Big( 1, -\frac{\delta \ell_d}{\delta s}, \delta \gamma  \Big) ,\;\;\forall\, \delta \gamma  \in V_h.
\end{equation}
With this, the previous condition becomes
\[
\begin{aligned}
\int_{t_0}^{t_1}   \Big[\Big\langle\!\!\Big\langle \frac{\delta \ell_d}{\delta A}, \delta A  \Big\rangle\!\!\Big\rangle- \Big\langle s- \varsigma , \delta A  \gamma  \Big\rangle - \Big\langle \frac{\delta \ell_d}{\delta \rho  } ,  \rho  \cdot B \Big\rangle - \Big\langle \frac{\delta \ell_d}{\delta s  } ,  \varsigma   \cdot B \Big\rangle  \\- c(1, \widehat{A}, \widehat{B}) - d_h \Big( 1, -\frac{\delta \ell_d}{\delta s}, B \gamma  \Big)\Big] {\rm d} t=0.
\end{aligned}
\]
Using $ \delta A = \partial _t B + [B, A]$ and the computation
\begin{align*}
&\int_{t_0}^{t_1}  \left\langle s- \varsigma , \delta A  \gamma  \right\rangle {\rm d} t=\int_{t_0}^{t_1}  \left\langle s- \varsigma , ( \partial _t B + [B, A])\gamma  \right\rangle {\rm d} t\\
&=\int_{t_0}^{t_1}  \left\langle - \partial _t(s- \varsigma ), B\gamma  \right\rangle- \left\langle s- \varsigma , B\partial _t \gamma  \right\rangle + \left\langle (s- \varsigma ) \cdot B, A \gamma \right\rangle- \left\langle (s- \varsigma ) \cdot A, B \gamma \right\rangle {\rm d} t\\
&=\int_{t_0}^{t_1}  \left\langle - \bar D^h _t(s- \varsigma ), B\gamma  \right\rangle- \Big\langle (s- \varsigma) \cdot B , D^h _t \gamma  \Big\rangle  {\rm d} t\\
&=\int_{t_0}^{t_1}  - d_h \Big(1, -\frac{\delta \ell_d}{\delta s}, B \gamma \Big) + \Big\langle (s- \varsigma) \cdot B , \frac{\delta \ell_d}{\delta s } \Big\rangle  {\rm d} t,
\end{align*}
the previous condition becomes
\[
\int_{t_0}^{t_1} \Big[\Big\langle\!\!\Big\langle - \partial _t \frac{\delta \ell_d}{\delta A}, B\Big\rangle\!\!\Big\rangle  + \Big\langle\!\!\Big\langle \frac{\delta \ell_d}{\delta A}, [B,A]\Big\rangle\!\!\Big\rangle  - \Big\langle \frac{\delta \ell_d}{\delta \rho  } ,  \rho  \cdot B \Big\rangle - \Big\langle \frac{\delta \ell_d}{\delta s  } ,  s   \cdot B \Big\rangle  - c(1, \widehat{A}, \widehat{B}) \Big] {\rm d} t=0.
\]
Using that $A \in \Delta _h$ means $A= A_u$ for $u \in U^{\rm grad}_h$ and using the definitions of $a$ and $b_h$ in \eqref{def_ah} and \eqref{def_bh}, together with $ \frac{\delta \ell_d}{\delta \rho  }= \pi_h \frac{\delta \ell}{\delta \rho  } $, $ \frac{\delta \ell_d}{\delta s  }= \pi _h\frac{\delta \ell}{\delta s  } $,  and $ \Big\langle\!\!\Big\langle  \frac{\delta \ell_d}{\delta A_u}, C \Big\rangle\!\!\Big\rangle = \left\langle \pi _h \frac{\delta \ell}{\delta u}, \widehat{C} \right\rangle $, we get the first equations of \eqref{Eulerian_multi_discrete}. 

Combining the constraint \eqref{KC_NSF_spatial_discrete} and condition \eqref{D_t_gamma_h}, we get 
\[
\Big\langle \bar D_t^h \varsigma ,  \pi _h \Big( \frac{\delta \ell_d}{\delta s}  w \Big)  \Big\rangle = - c(w, \widehat{A},\widehat{A}) - d_h\Big(w , -\frac{\delta \ell_d}{\delta s}, \frac{\delta \ell_d}{\delta s} \Big)+ e_h \Big( w, -\frac{\delta \ell_d}{\delta s}\Big), \quad \forall w \in V_h.
\]
Using \eqref{D_t_s_h} and $A=A_u$ this becomes
\[
\begin{aligned}
\Big\langle \bar D_t^h s ,  \pi _h \Big( \frac{\delta \ell_d}{\delta s}  w \Big)  \Big\rangle - d_h \Big( 1, -\frac{\delta \ell_d}{\delta s}, \pi _h \Big( \frac{\delta \ell_d}{\delta s}  w \Big)  \Big) = - c(w, u,u) - d_h\Big(w , -\frac{\delta \ell_d}{\delta s}, \frac{\delta \ell_d}{\delta s} \Big) \\+ e_h \Big( w, -\frac{\delta \ell_d}{\delta s}\Big),
\end{aligned}
\]
$\forall w \in V_h$.
We have thus derived the entropy equation in \eqref{Eulerian_multi_discrete}. The mass density equation readily follows from $\rho  = \varrho _0 \cdot g ^{-1} $ while the conditions \eqref{additional_conditions_discrete} have been obtained in \eqref{D_t_gamma_h} and  \eqref{D_t_s_h}.
\end{proof}

\paragraph{Energy balance and mass conservation.} The exact same proof as in the continuous case works, see \eqref{energy_bal}, giving energy balance and mass conservation in the spatially discrete setting as
\begin{align}
\frac{d}{dt} \mathcal{E}& = - e_h\Big(1, - \pi _h\frac{\delta \ell}{\delta s} \Big)\label{semidiscrete_energy}\\
\frac{d}{dt} \int_ \Omega \rho \, {\rm d} x&=0.\label{semidiscrete_mass}
\end{align}

\begin{remark}
The conservation properties above continue to hold if we omit the outermost projection $\pi_h$ from the terms of the form $\pi _h \Big( \pi_h \Big(\frac{\delta \ell}{\delta s} \Big) w \Big)$ in~(\ref{Eulerian_multi_discrete}).  They also continue to hold if we omit the $\pi_h$ from $\pi_h \frac{\delta\ell}{\delta u}$ in~(\ref{Eulerian_multi_discrete}).  We find it advantageous to make these modifications, since they simplify the implementation of the scheme.  We do not omit the $\pi_h$ from $\pi_h \frac{\delta\ell}{\delta \rho}$, since this interferes with the balance of energy.  Likewise, we do not omit the $\pi_h$ from $\pi_h \frac{\delta\ell}{\delta s}$, since we have found that the time-discrete version of~(\ref{Eulerian_multi_discrete}) is difficult to solve numerically without it; Newton's method converges much more reliably when the projection is present.

In summary, we solve
\begin{equation}\label{Eulerian_multi_discrete3} 
\!\!\left\{
\begin{array}{l}
\vspace{0.2cm}\displaystyle
\left\langle  \partial_t \frac{\delta  \ell}{\delta u}, \vv  \right\rangle  + a\Big( \frac{\delta \ell}{\delta u}, u , \vv \Big)  + b_h \Big( \pi _h \frac{\delta \ell}{\delta \rho   }, \rho , \vv \Big) - b_h( T, s , \vv ) =- c(1, u, \vv ), \, \forall\, \vv \in U_h^{\rm grad},\\
\vspace{0.2cm}\displaystyle \left\langle \partial _t \rho  , \theta \right\rangle +b_h( \theta , \rho  , u)=0, \;\; \forall \,\theta \in V_h,\\
\vspace{0.2cm}\displaystyle \left\langle  \partial _t  s, T w  \right\rangle + b_h( T w, s, u) - d_h( 1, T, T w) \\ \quad =  c(w,u,u) - d_h(w, T , T)- e_h(w, T), \;\; \,w \in V_h,
\end{array}
\right.
\end{equation}
where $T = - \frac{\delta \ell_d}{\delta s}= -\pi_h \frac{\delta \ell}{\delta s}$.
\end{remark}

\subsection{Discretization of thermodynamic fluxes}

We now discuss our discretizations $d_h$ and $e_h$ of the maps $d$ and $e$ defined in~(\ref{d}) and~(\ref{e}).  We will design $d_h$ and $e_h$ so that the discrete entropy equation
\begin{equation} \label{discrete_entropy}
\begin{aligned}
&\langle  \partial_t  s, T w \rangle + b_h(Tw, s, u)  - d_h  ( 1, T, Tw )\\
&\qquad \qquad \qquad =  c(w,u,u) -d_h  ( w,T ,T   )- e_h (w, T ), \;\; \forall w \in V_h
\end{aligned}
\end{equation}
yields a consistent discretization of the continuous entropy equation
\[
T(\bar{D}_t s + \dv j_s) = \sigma : \nabla u - j_s \cdot \nabla T.
\]
We restrict our attention to the setting where $j_s=j_s(T) = -\frac{1}{T} \kappa \nabla T$, so that the above equation simplifies to
\[
T \bar{D}_t s - \sigma : \nabla u = \kappa \Delta T.
\] 

Note that an integration by parts shows that the exact solution $(u,\rho,s,T)$ of the continuous problem satisfies
\[
\langle  \partial_t  s, T w \rangle + b_h(Tw, s, u) - c(w,u,u) = \langle T \bar{D}_t s - \sigma : \nabla u, w \rangle, \quad \forall w \in V_h.
\]
Thus, to ensure consistency, we aim to design $d_h$ and $e_h$ so that the exact solution also satisfies
\begin{equation} \label{consistent}
d_h  ( 1, T, Tw ) -d_h  ( w,T ,T   )- e_h (w, T ) = \langle \kappa \Delta T, w \rangle, \quad \forall w \in V_h.
\end{equation}

Our discussion is split into three cases: homogeneous Neumann boundary conditions on $T$, Dirichlet boundary conditions on $T$, and nonhomogneous Neumann boundary conditions on $T$.  To simplify the discussion, we consider the setting where~(\ref{discrete_entropy}) is implemented with $T=-\frac{\delta \ell}{\delta s}$.  The case in which one uses $T = -\pi_h \frac{\delta \ell}{\delta s}$ is addressed in Remark~\ref{remark:variationalcrime}.

\paragraph{Homogeneous Neumann boundary conditions.}
In this case we set $e_h=0$ and discretize $d$ with
\begin{equation} \label{dh2}
\begin{split}
d_h(w,f,g) 
&= -\sum_{K \in \mathcal{T}_h} \int_K \frac{w}{f} \kappa \nabla f \cdot \nabla g \, {\rm d}x + \sum_{e \in \mathcal{E}_h^0} \int_e  \frac{1}{\{f\}} \{ w \kappa \nabla f \} \cdot \llbracket g \rrbracket \,  {\rm d}a \\
&\qquad - \sum_{e \in \mathcal{E}_h^0} \int_e \frac{1}{\{f\}} \{w \kappa \nabla g\} \cdot \llbracket f \rrbracket \, {\rm d}a  - \sum_{e \in \mathcal{E}_h^0} \frac{\eta}{h_e} \int_e  \frac{\{w\}}{\{f\}} \llbracket f \rrbracket \cdot \llbracket g \rrbracket \, {\rm d}a,
\end{split}
\end{equation}
where $\eta >0$ is a penalty parameter.  This is a standard non-symmetric interior penalty discretization of $-\int_\Omega \kappa \nabla f \cdot \nabla g \, {\rm d}x$ (see Section 10.5 in \cite{BrSc2008}), generalized to incorporate the weight $w/f$ appearing in $d(w,f,g) = -\int_\Omega \frac{w}{f} \kappa \nabla f \cdot \nabla g \, dx$.
Using the identity $\llbracket f g \rrbracket = \llbracket f \rrbracket \{g\} + \{ f \} \llbracket g \rrbracket$, a few calculations show that for every $w,T \in V_h$,
\begin{equation} \label{dsTT2}
\begin{aligned}
& d_h(1,T,Tw) - d_h(w,T,T) \\
&=-  \sum_{K \in \mathcal{T}_h} \int_K \kappa \nabla T \cdot \nabla w \, {\rm d}x + \sum_{e \in \mathcal{E}_h^0} \int_e \{ \kappa \nabla T\} \cdot \llbracket w \rrbracket \,  {\rm d}a \\
& \quad - \sum_{e \in \mathcal{E}_h^0} \int_e \kappa (\{\nabla w\}+\varepsilon) \cdot \llbracket T \rrbracket \, {\rm d}a   - \sum_{e \in \mathcal{E}_h^0} \frac{\eta}{h_e} \int_e \llbracket T \rrbracket \cdot \llbracket w \rrbracket \,  {\rm d}a,
\end{aligned}
\end{equation}
where
\begin{equation} \label{extra}
\varepsilon = \frac{1}{\{T\}} \left( \{\nabla(Tw)\} - \{\nabla T\} \{ w \} - \{T\} \{\nabla w\} \right).
\end{equation}
Hence, for smooth $T$, we get
\begin{equation} \label{consistent_Neu}
d_h(1,T,Tw) - d_h(w,T,T) - e_h(w,T) 
=
 \int_\Omega \kappa\Delta T w \, {\rm d}x -  \int_{\partial\Omega}\kappa \nabla T \cdot n w \,  {\rm d}a, \quad \forall w \in V_h.
\end{equation}
This shows that the method is consistent: The exact solution (with homogeneous Neumann boundary conditions on $T$) satisfies~(\ref{consistent}) (where here $e_h=0$).  Furthermore, the presence of $\int_{\partial\Omega} \nabla T \cdot n w \,  {\rm d}a$ in~(\ref{consistent_Neu}) shows that the scheme enforces homogeneous Neumann boundary conditions on $T$ in a natural way.

We also have for every $w,T \in V_h$,
\begin{equation}\label{dhwff} 
d_h(w,T,T) =-  \sum_{K \in \mathcal{T}_h} \int_K \frac{w}{T} \kappa |\nabla T|^2 \, {\rm d}x - \sum_{e \in \mathcal{E}_h^0} \frac{\eta}{h_e} \int_e \frac{\{w\}}{\{T\}} |\llbracket T \rrbracket|^2  \,  {\rm d}a,
\end{equation} 
so the following inequality holds:
\begin{equation} \label{positivity_d_N_discrete}
d_h(w,T,T) \le 0, \;\;\forall \;T > 0, \;\;\forall\; w \geq 0, \qquad T, w \in V_h.
\end{equation}
This is the discrete version of the thermodynamic consistency condition \eqref{positivity_d_N}. In particular, $d_h(1_K,T,T) \le 0$, for all $K$, where $1_K$ denotes the indicator function for $K$.

\paragraph{Dirichlet boundary conditions.}
Next we consider Dirichlet boundary conditions.  To distinguish the choice of $d_h$ above from the forthcoming choice, we denote the former by $d_h^N$ and the latter by $d_h^D$, and similarly for $e_h$.  We define
\begin{equation} \label{eh}
e_h^D(w,f) = - \int_{\partial \Omega} \frac{w}{f} \kappa  \nabla f \cdot n T_0 \, {\rm d}a + \sum_{e \in \mathcal{E}_h^{\partial}} \frac{\eta}{h_e} \int_e w (f-T_0) \, {\rm d}a
\end{equation}
and
\begin{equation} \label{dh}
\begin{split}
d_h^D(w,f,g) 
&=- \sum_{K \in \mathcal{T}_h} \int_K \frac{w}{f} \kappa \nabla f \cdot \nabla g \, {\rm d}x + \sum_{e \in \mathcal{E}_h^0} \int_e  \frac{1}{\{f\}} \{ w \kappa \nabla f \} \cdot \llbracket g \rrbracket \,  {\rm d}a \\&\quad - \sum_{e \in \mathcal{E}_h^0} \int_e \frac{1}{\{f\}} \{w  \kappa \nabla g\} \cdot \llbracket f \rrbracket \, {\rm d}a  - \sum_{e \in \mathcal{E}_h^0} \frac{\eta}{h_e} \int_e  \frac{\{w\}}{\{f\}} \llbracket f \rrbracket \cdot \llbracket g \rrbracket \,  {\rm d}a \\&\quad - \sum_{e \in \mathcal{E}_h^{\partial}} \int_e \frac{w}{f} \kappa \nabla g \cdot n (f-T_0) \,  {\rm d}a  +\sum_{e \in \mathcal{E}_h^{\partial}} \int_e \frac{w}{f} \kappa \nabla f \cdot n g \,  {\rm d}a,
\end{split}
\end{equation}
where $T_0 : \partial\Omega \rightarrow \mathbb{R}$ is the prescribed temperature on the boundary.  Like~(\ref{dh2}),~(\ref{dh}) is a standard non-symmetric interior penalty discretization of $-\int_\Omega \kappa \nabla f \cdot \nabla g \, {\rm d}x$ (this time for problems with Dirichlet boundary conditions), generalized to incorporate the weight $w/f$ appearing in $d(w,f,g) = -\int_\Omega \frac{w}{f} \kappa \nabla f \cdot \nabla g \, {\rm d} x +\int_{\partial\Omega} \frac{w}{f} \kappa \nabla f \cdot n g \,  {\rm d}a$, see \eqref{d}.  It is related to~(\ref{dh2}) via the addition of three boundary terms: 
\begin{enumerate}
\item A term $\sum_{e \in \mathcal{E}_h^{\partial}} \int_e \frac{w}{f} \kappa  \nabla f \cdot n g \,  {\rm d}a$ that corresponds to the term $-\int_{\partial\Omega} w j_s(f) \cdot n g \, {\rm d}a$ appearing in~(\ref{d}).
\item A term $-\sum_{e \in \mathcal{E}_h^{\partial}} \int_e \frac{w}{f} \kappa \nabla g \cdot n f \,  {\rm d}a$ whose role is to cancel with $\sum_{e \in \mathcal{E}_h^{\partial}} \int_e \frac{w}{f} \kappa \nabla f \cdot n g \,  {\rm d}a$ when $f=g$.
\item A term $\sum_{e \in \mathcal{E}_h^{\partial}} \int_e \frac{w}{f}\kappa  \nabla g \cdot n T_0 \,  {\rm d}a$ that restores the consistency of $d_h^D$.
\end{enumerate}

Similar calculations as earlier show that for every $w,T \in V_h$, 
\begin{equation} \label{dsTT}
\begin{split}
& d_h^D(1,T,Tw) - d_h^D(w,T,T) - e_h^D(w,T)\\
&= -\sum_{K \in \mathcal{T}_h} \int_K\kappa  \nabla T \cdot \nabla w \, {\rm d}x + \sum_{e \in \mathcal{E}_h^0} \int_e \{ \kappa \nabla T\} \cdot \llbracket w \rrbracket \,  {\rm d}a - \sum_{e \in \mathcal{E}_h^0} \int_e \kappa (\{\nabla w\}+\varepsilon) \cdot \llbracket T \rrbracket \,  {\rm d}a \\
&\quad - \sum_{e \in \mathcal{E}_h^0} \frac{\eta}{h_e} \int_e \llbracket T \rrbracket \cdot \llbracket w \rrbracket \,  {\rm d}a  - \sum_{e \in \mathcal{E}_h^{\partial}} \int_e \kappa \nabla (Tw) \cdot n \left(1-\frac{T_0}{T}\right) \, {\rm d}a \\
&\quad  + \sum_{e \in \mathcal{E}_h^{\partial}} \int_e \kappa \nabla T \cdot n w \, {\rm d}a - \sum_{e \in \mathcal{E}_h^{\partial}} \frac{\eta}{h_e} \int_e w (T-T_0) \, {\rm d}a,
\end{split}
\end{equation}
where $\varepsilon$ is given by~(\ref{extra}).  
Hence, for smooth $T$,
\begin{align*}
&d_h^D(1,T,Tw) - d_h^D(w,T,T) - e_h^D(w,T) \\
&=
 \int_\Omega\kappa \Delta T w \, {\rm d}x +  \int_{\partial\Omega}\kappa \nabla (Tw) \cdot n \left(\frac{T_0}{T}-1 \right) \, {\rm d}a  \\&\quad +  \sum_{e \in \mathcal{E}_h^{\partial}} \frac{\eta}{h_e} \int_e w (T_0-T) \, {\rm d}a, \quad \forall w \in V_h.
\end{align*}
This shows that the method is consistent, and it enforces Dirichlet boundary conditions on $T$ in a nearly standard way.

If $w$ has compact support in the interior of $ \Omega $ and $T>0$, then $d^D_h(w,T,T)$ takes exactly the same form as in the homogeneous Neumann case in \eqref{dhwff}. So the discrete version of the thermodynamic consistency condition \eqref{positivity_d_D} for Dirichlet boundary conditions holds. In particular, $d_h^D(1_K,T,T) \le  0$, for all $K$ away from $\partial\Omega$.

The terms associated with heat conduction processes on the right hand side of the discrete entropy equation \eqref{discrete_entropy} are given by
\begin{equation}\label{rhs_D}
\begin{aligned}
d_h^D(w,T,T) + e_h^D(w,T) =& -\sum_{K \in \mathcal{T}_h} \int_K \frac{w}{T} \kappa  |\nabla T|^2 \, {\rm d}x - \sum_{e \in \mathcal{E}_h^0} \frac{\eta}{h_e} \int_e \frac{\{w\}}{\{T\}} |\llbracket T \rrbracket|^2  \,  {\rm d}a \\
 & +\sum_{e \in \mathcal{E}^ \partial _h} \frac{ \eta }{h_e}\int_e w (T-T_0) {\rm d} a.
\end{aligned}
\end{equation}

\paragraph{Nonhomogeneous Neumann boundary conditions.} In view of the expressions \eqref{d} and \eqref{e} for $d$ and $e$ in the nonhomogeneous Neumann case, we discretize $d$ and $e$ with
\[
d^{NN}_h(w,f,g)= d_h^N(w,f,g) +  \sum_{e \in \mathcal{E}_h^{\partial}} \int_e \frac{w}{f} \kappa \nabla f \cdot n g \, {\rm d}a \quad\text{and}\quad e^{NN}_h(w,f) = \int_{\partial\Omega} w q_0 \, {\rm d}a,
\]
where $d_h^N$ is taken from the homogeneous Neumann case, see \eqref{dh2}.
The consistency of the method can be checked by following the same steps as before, and similarly for the discrete version of the thermodynamic consistency condition \eqref{positivity_d_D}.

Now the expression entering into the right hand side of the discrete entropy equation \eqref{discrete_entropy} reads
\begin{equation}\label{rhs_NN}
\begin{aligned}
d^{NN}_h(w,T,T) + e^{NN}_h(w,T)=& -\sum_{K \in \mathcal{T}_h} \int_K \frac{w}{T} \kappa  |\nabla T|^2 \, {\rm d}x - \sum_{e \in \mathcal{E}_h^0} \frac{\eta}{h_e} \int_e \frac{\{w\}}{\{T\}} |\llbracket T \rrbracket|^2  \,  {\rm d}a \\
 & -\int_{ \partial \Omega } w \big( T j_s(T) \cdot n - q_0\big) {\rm d} a;
\end{aligned}
\end{equation}
compare to \eqref{rhs_D}.

\begin{remark} \label{remark:variationalcrime}
In the discussion above, we established the consistency of~(\ref{discrete_entropy}) in the setting where $T=-\frac{\delta \ell}{\delta s}$. If instead $T=-\pi_h\frac{\delta \ell}{\delta s}$, then we have consistency in a weaker sense: The solution to the continuous problem satisfies the discrete equations approximately rather than exactly.  One can equivalently view this situation as a variational crime; see Chapter 10 in \cite{BrSc2008}.  Indeed, by using $-\pi_h\frac{\delta \ell}{\delta s}$ in place of $-\frac{\delta \ell}{\delta s}$, we are in essence modifying the maps $d$ and $e$.  This is analogous to the effect of quadrature on classical finite element methods for elliptic problems; see Section 8.1.3 in \cite{ErGu2004}.
\end{remark}

\paragraph{Discrete form of the second law.} 
The rate of entropy production in the discrete setting is $D^h _t \varsigma$.  According to~(\ref{D_t_s_h}), it satisfies
\[
\langle  \bar D^h _t \varsigma , T w \rangle = \langle  \bar D^h _t  s, T w \rangle  - d_h  ( 1, T, \pi_h(Tw) ), \quad \forall w \in V_h,
\]
where $T = -\pi_h \frac{\delta \ell}{\delta s}$.  For piecewise constant $w$, we have $\pi_h(Tw) = Tw$, so 
\begin{align*}
\langle  \bar D^h _t \varsigma , T w \rangle 
&= \langle  \bar D^h _t  s, T w \rangle  - d_h  ( 1, T, Tw ) \\
&= c_h(w,u,u) -d_h  ( w,T ,T   )- e_h (w, T ), \quad \forall w \in DG_0(\mathcal{T}_h).
\end{align*}
See \eqref{dhwff}, \eqref{rhs_D}, and \eqref{rhs_NN} for the concrete expressions for each boundary condition.

From the definition of $c$ and the thermodynamic consistency of $d_h^N$, $d_h^D$, $d_h^{NN}$, the numerical solution satisfies 
\begin{equation}\label{space_discrete_2nd_law}
\langle  \bar D^h _t \varsigma , T w \rangle=\langle  \bar D^h _t  s, T w \rangle  - d_h  ( 1, T, Tw )\geq 0,
\end{equation}
for all $w \in DG_0(\mathcal{T}_h)$ with $w\geq 0$ (provided that $w$ has compact support in $ \Omega $ for Dirichlet and nonhomogeneous Neumann boundary conditions), which is the discrete form of the second law \eqref{2nd_law_continuum} as reformulated in \eqref{2nd_law_continuum_b_d}. 
In particular $\int_K T \bar{D}_t^h \varsigma \, {\rm d}x \ge 0$ on each $K$ (provided that $K$ is away from $\partial\Omega$ in the Dirichlet and nonhomogeneous Neumann cases). If $T$ is piecewise constant, then this implies that $\int_K \bar{D}_t^h \varsigma \, {\rm d}x \ge 0$ on those $K$.

\subsection{Temporal discretization}

As shown in Proposition 5.1 of \cite{GaGB2020}, in the absence of irreversible processes $(c=d=0)$ an energy preserving time discretization can be developed for Lagrangians of the standard form \eqref{standard_ell}.

We show that a natural extension of this scheme to the discrete heat conducting viscous fluid equations \eqref{Eulerian_multi_discrete3} allows one to exactly preserve energy balance and mass conservation, while satisfying the second law of thermodynamics on each cell at the fully discrete level.
The scheme reads
\begin{equation}\label{fully_discrete_NSF}
\begin{aligned} 
&\left\langle D_{\Delta t}( \rho  u) , v \right\rangle + a \left( (\rho  u)_{k+ 1/2} , u_{k+1/2}, v\right) - b_h \left( D_2 \epsilon, s _{k+ 1/2}  , v\right) \\&\quad + b_h \left(  \frac{1}{2} \pi_h (u _k \cdot u_{k+1}) -   D _1 \epsilon  -\pi_h\phi , \rho_{k+ 1/2}   , v\right)  = - c(1,u_{k+ 1/2} ,v), \quad \forall \,v \in U_h^{\rm grad} \\[0.1cm]
&\left\langle  D_ { \Delta t}  \rho, \theta  \right\rangle  + b_h ( \theta   , \rho  _{k+ 1/2 }, u_{k+ 1/2}) = 0, \quad \forall \,\theta  \in V_h   \\[0.1cm]
&\left\langle  D_{ \Delta t} s, D_2 \epsilon  w  \right\rangle + b_h \left( D_2 \epsilon w , s_{k+1/2}, u_{k+1/2} \right)  - d_h \left( 1, D_2 \epsilon  , D_2 \epsilon  w \right) \\[0.1cm]
& \hspace{2cm} = c(w, u_{k+1/2}, u_{k+1/2}) - d_h\left( w  , D_2 \epsilon ,D_2 \epsilon   \right)-e_h(w, D_2 \epsilon), \quad \forall \, w \in V_h
\end{aligned}
\end{equation} 
with
\[
D_1 \epsilon = \pi_h \left( \frac{  \delta _1 ( \rho  _k, \rho  _{k+1}, s_k)+  \delta _1 ( \rho  _k, \rho  _{k+1}, s_{k+1})}{2} \right), \qquad \delta _1( \rho  , \rho  ', s)= \frac{ \epsilon ( \rho  ',s)- \epsilon ( \rho  , s)}{ \rho  '- \rho  } 
\]
\[
D_2 \epsilon = \pi_h \left( \frac{  \delta _2 ( s  _k, s  _{k+1},  \rho  _k)+  \delta _2 ( s  _k, s  _{k+1},  \rho  _{k+1})}{2} \right) , \qquad \delta _2( s  , s  ', \rho  )= \frac{ \epsilon ( \rho  ,s')- \epsilon ( \rho  , s)}{ s  '- s  } 
\]
\[
D_ { \Delta t} x= \frac{x_{k+1}- x_k}{ \Delta t} , \qquad 
D_{\Delta t}(\rho u) = \frac{\rho_{k+1} u_{k+1} - \rho_k u_k}{\Delta t}
\]
\[
x_{k+1/2} = \frac{x_k+x_{k+1}}{2}, \qquad (\rho u)_{k+1/2} = \frac{\rho_k u_k + \rho_{k+1} u_{k+1}}{2}.
\]

\begin{proposition} The solution of \eqref{fully_discrete_NSF} satisfies the energy balance and conservation of mass
\begin{align}
\frac{\mathcal{E} _{k+1}- \mathcal{E} _k}{\Delta t}& = - e_h(1, D_2 \epsilon )\label{energy_balance} \\
\int_ \Omega \rho  _{k+1} {\rm d} x&= \int_ \Omega \rho  _k {\rm d} x,\label{mass_conservation}
\end{align}
as well as the second law
\begin{equation}\label{fully_discrete_2nd_law}
\begin{aligned}
\left\langle  D_{ \Delta t} s, D_2 \epsilon  w  \right\rangle + b_h \left( D_2 \epsilon w , s_{k+1/2}, u_{k+1/2} \right) &  \\ - d_h \left( 1, D_2 \epsilon  , D_2 \epsilon  w \right) &\geq 0, \quad \forall w \in DG_0(\mathcal{T}_h), \, w\geq 0,
\end{aligned}
\end{equation}  
for all $k$ (provided that $w$ has compact support in $ \Omega $ for Dirichlet and nonhomogeneous Neumann boundary conditions).

Here $ \mathcal{E}  _k=\int_ \Omega \left[  \frac{1}{2} \rho  _k | u _k | ^2 + \epsilon  ( \rho  _k, s_k) +\rho  _k \phi  \right] {\rm d} x$ is the total energy of the system at time $k\Delta t$.
\end{proposition}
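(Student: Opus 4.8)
The plan is to follow the structure of the semidiscrete energy balance \eqref{energy_bal}, now supplemented by the discrete product and chain rules already used in the reversible case \cite[Proposition 5.1]{GaGB2020}. Mass conservation \eqref{mass_conservation} is immediate: taking $\theta=1$ in the second equation of \eqref{fully_discrete_NSF} and using $b_h(1,\rho,u)=\langle\rho,A_u1\rangle=0$ gives $\langle D_{\Delta t}\rho,1\rangle=0$. For the energy balance, the first step is the purely algebraic identity
\[
\frac{\mathcal{E}_{k+1}-\mathcal{E}_k}{\Delta t}=\big\langle D_{\Delta t}(\rho u),u_{k+1/2}\big\rangle-\tfrac{1}{2}\big\langle D_{\Delta t}\rho,\pi_h(u_k\cdot u_{k+1})\big\rangle+\langle D_1\epsilon,D_{\Delta t}\rho\rangle+\langle D_2\epsilon,D_{\Delta t}s\rangle+\langle D_{\Delta t}\rho,\pi_h\phi\rangle,
\]
obtained by expanding $\langle\rho_{k+1}u_{k+1}-\rho_ku_k,\,u_k+u_{k+1}\rangle$ for the kinetic part and by using the telescoping property $\epsilon(\rho_{k+1},s_{k+1})-\epsilon(\rho_k,s_k)=\widetilde{D_1\epsilon}\,(\rho_{k+1}-\rho_k)+\widetilde{D_2\epsilon}\,(s_{k+1}-s_k)$ of the averaged difference quotients $\delta_1,\delta_2$ for the internal-energy part ($\widetilde{D_i\epsilon}$ denoting $D_i\epsilon$ before the projection); the outer $\pi_h$'s in $D_1\epsilon,D_2\epsilon$ and in $\pi_h(u_k\cdot u_{k+1})$ may be inserted freely since they are paired against $D_{\Delta t}\rho,D_{\Delta t}s\in V_h$.

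Next I would substitute the equations of motion. Testing the momentum equation of \eqref{fully_discrete_NSF} with $v=u_{k+1/2}$ and using the antisymmetry $a(w,u,v)=-a(w,v,u)$ eliminates the $a$-term and expresses $\langle D_{\Delta t}(\rho u),u_{k+1/2}\rangle$ as a combination of $b_h(\cdot,\rho_{k+1/2},u_{k+1/2})$ with first argument in $\{\pi_h(u_k\cdot u_{k+1}),\,D_1\epsilon,\,\pi_h\phi\}$, together with $b_h(D_2\epsilon,s_{k+1/2},u_{k+1/2})$ and $-c(1,u_{k+1/2},u_{k+1/2})$. Testing the mass equation with $\theta\in\{D_1\epsilon,\,\pi_h\phi,\,\pi_h(u_k\cdot u_{k+1})\}$ rewrites each $b_h(\theta,\rho_{k+1/2},u_{k+1/2})$ as $-\langle D_{\Delta t}\rho,\theta\rangle$. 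Finally, testing the entropy equation with $w=1$ (the two $d_h(1,D_2\epsilon,D_2\epsilon)$-terms cancel) yields $\langle D_{\Delta t}s,D_2\epsilon\rangle+b_h(D_2\epsilon,s_{k+1/2},u_{k+1/2})=c(1,u_{k+1/2},u_{k+1/2})-e_h(1,D_2\epsilon)$. Inserting all of this into the identity above, every term cancels in pairs except $-e_h(1,D_2\epsilon)$, which is \eqref{energy_balance}.

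For the second law, fix $w\in DG_0(\mathcal{T}_h)$ with $w\geq 0$. Since $w$ is piecewise constant, $D_2\epsilon\,w\in V_h$, so $w$ is an admissible test function, and the entropy equation of \eqref{fully_discrete_NSF} tested with $w$ shows that the left-hand side of \eqref{fully_discrete_2nd_law} equals $c(w,u_{k+1/2},u_{k+1/2})-d_h(w,D_2\epsilon,D_2\epsilon)-e_h(w,D_2\epsilon)$. Now $c(w,u_{k+1/2},u_{k+1/2})=\int_\Omega w\,\sigma(u_{k+1/2}):\nabla u_{k+1/2}\,{\rm d}x\geq 0$ by \eqref{positivity_c}, \eqref{entrop_prod}, and by the explicit formulas \eqref{dhwff}, \eqref{rhs_D}, \eqref{rhs_NN} (with $T$ the fully discrete temperature $D_2\epsilon>0$) the quantity $-d_h(w,D_2\epsilon,D_2\epsilon)-e_h(w,D_2\epsilon)$ is a sum of manifestly nonnegative cell and interior-face terms — recall $w\geq 0$, $\{w\}\geq 0$, $D_2\epsilon>0$ — plus, in the Dirichlet and nonhomogeneous Neumann cases, a boundary integral over $\partial\Omega$ that vanishes because $w\in DG_0$ with compact support in $\Omega$ is zero on all cells meeting $\partial\Omega$. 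This gives \eqref{fully_discrete_2nd_law} for every $k$.

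The main obstacle is the algebraic energy identity of the first paragraph: verifying that the midpoint momentum flux $(\rho u)_{k+1/2}$, the difference quotients $D_1\epsilon,D_2\epsilon$, and the averaged kinetic weight $\tfrac{1}{2}\pi_h(u_k\cdot u_{k+1})$ are precisely the combination that makes $\mathcal{E}_k$ telescope. This is essentially \cite[Proposition 5.1]{GaGB2020} adapted to carry the extra $c$- and $d_h$-terms, which is comparatively painless since these appear on both sides of the entropy equation and cancel upon testing with $w=1$. A minor point, implicit in the statement, is that the discrete temperature $D_2\epsilon$ must remain positive for the $d_h$-inequalities to be applicable.
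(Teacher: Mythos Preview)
Your proof is correct and follows essentially the same route as the paper's. The paper sums the three tested equations (momentum with $v=u_{k+1/2}$, mass with $\theta=-\big(\tfrac{1}{2}\pi_h(u_k\cdot u_{k+1})-D_1\epsilon-\pi_h\phi\big)$, entropy with $w=1$) and then invokes the same telescoping identities you state in your first paragraph; you simply reverse the order of presentation. For the second law, both arguments reduce to nonnegativity of $c(w,\cdot,\cdot)$ and the sign properties of $d_h$ established in \eqref{dhwff}, \eqref{rhs_D}, \eqref{rhs_NN}.
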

\begin{proof}
Taking $v=u_{k+1/2}$ in the fluid momentum equation, $\theta=-\left( \frac{1}{2}\pi_h(u_k \cdot u_{k+1}) - D_1 \epsilon - \pi_h\phi\right)$ in the mass equation, $w=1$ in the entropy equation, and adding the three equations together, we get
\begin{align*}
-e_h(1,D_2\epsilon)
&= \langle D_{\Delta t} (\rho u), u_{k+1/2} \rangle + \langle D_{\Delta t} \rho, \theta \rangle + \langle D_{\Delta t} s, D_2\epsilon \rangle \\
&= \langle D_{\Delta t} (\rho u), u_{k+1/2} \rangle - \frac{1}{2} \langle D_{\Delta t} \rho, u_k \cdot u_{k+1} \rangle + \langle D_{\Delta t} \rho, D_1\epsilon \rangle + \langle D_{\Delta t} \rho, \phi \rangle \\&\quad + \langle D_{\Delta t} s, D_2\epsilon \rangle.
\end{align*}
The energy balance relation~(\ref{energy_balance}) then follows from the identities
\begin{align*}
 \langle D_{\Delta t} (\rho u), u_{k+1/2} \rangle - \frac{1}{2} \langle D_{\Delta t} \rho, u_k \cdot u_{k+1} \rangle &= \frac{1}{\Delta t} \int_{\Omega} \frac{1}{2}\rho_{k+1} |u_{k+1}|^2 - \frac{1}{2} \rho_k |u_k|^2 \, {\rm d}x, \\
\langle D_{\Delta t} \rho, D_1\epsilon \rangle + \langle D_{\Delta t} s, D_2\epsilon \rangle &= \frac{1}{\Delta t} \int_{\Omega} \epsilon(\rho_{k+1},s_{k+1}) - \epsilon(\rho_k,s_k) \, {\rm d}x, \\
 \langle D_{\Delta t} \rho, \phi \rangle &= \frac{1}{\Delta t}\int_{\Omega} \rho_{k+1}\phi - \rho_k \phi \, {\rm d}x.
\end{align*}

Conservation of mass \eqref{mass_conservation} follows by taking $ \theta =1$ in the mass density equation.

Finally, the discrete second law is a direct consequence of the last equation together with the definition of $c$ and the thermodynamic consistency of $d_h^N$, $d_h^D$, and $d_h^{NN}$.
\end{proof}

\subsection{Enhancements}\label{enhancements}

The scheme~(\ref{fully_discrete_NSF}) admits several enhancements and generalizations.

\paragraph{Rotation.} Rotational effects, which are important in geophysical applications, can be easily incorporated by using a Lagrangian
\[
\ell(u, \rho  , s) =  \int_ \Omega \Big[\frac{1}{2} \rho  | u| ^2 + \rho  R \cdot u- \rho  \phi - \epsilon ( \rho  , s) \Big]{\rm d} x
\]
in place of~(\ref{standard_ell}).  Here, $R$ is half the vector potential of the angular velocity $\omega$ of the fluid domain; that is, $\curl R = 2\omega$.

\paragraph{Variable coefficients.} Temperature-dependent and density-dependent coefficients $\mu ( \rho  , T)$ and $ \zeta ( \rho  , T)$ can be treated without difficulty.  The map $c$ then depends parametrically on $\rho$ and $T$, and the balance of energy, conservation of mass, and second law of thermodynamics remain valid at the discrete level if we evaluate $c(1,u_{k+1/2},v)$ and $c(w,u_{k+1/2},u_{k+1/2})$ at, e.g., $(\rho,T)=(\rho_{k+1/2},D_2\epsilon)$ in~(\ref{fully_discrete_NSF}).  It is also possible to treat a temperature-dependent and density-dependent heat conduction coefficient $\kappa(\rho,T)$. In~(\ref{dh2}) and~(\ref{dh}), one must choose appropriate discretizations of the trace of $\kappa(\rho,f)$ (such as $\{\kappa(\rho,f)\}$ or $\kappa(\{\rho\},\{f\})$) when treating integrals over $e \in \mathcal{E}_h^0$.  The balance of energy and conservation of mass remain valid at the discrete level, and so does the second law of thermodynamics, assuming that $\kappa$ remains positive and its discrete trace along each $e \in \mathcal{E}_h^0$ remains positive.

\paragraph{Upwinding.}
Upwinding can be incorporated into the discrete advection equations for the density and entropy, while still preserving the balance of energy, conservation of mass, and second law of thermodynamics.  Following \cite{GaGB2020b,GaGB2021,GaGB2022}, one introduces a $u$-dependent trilinear form
\[
\widetilde{b}_h(u; f,g,v) = b_h(f,g,v) + \sum_{e \in \mathcal{E}_h^0} \int_e \beta_e(u) \left( \frac{v \cdot n}{u \cdot n} \right) \llbracket f \rrbracket \cdot \llbracket g \rrbracket \, \mathrm{d}a,
\]
where $\{\beta_e(u)\}_{e \in \mathcal{E}_h^0}$ are nonnegative scalars.  One then replaces every appearance of $b_h(\cdot,\cdot,\cdot)$ in~(\ref{fully_discrete_NSF}) by $\widetilde{b}_h(u_{k+1/2};\,\cdot,\cdot,\cdot)$.  We used this upwinding strategy with 
\begin{equation} \label{abs}
\beta_e(u) = \frac{1}{\pi} (u \cdot n) \arctan\left( 10u \cdot n \right) \approx \frac{1}{2}|u \cdot n|
\end{equation}
in the numerical experiments below.

\section{Numerical tests}\label{sec_4}

\subsection{Rayleigh-B\'enard convection} \label{sec:rayleigh}

We use our scheme to simulate Rayleigh-B\'enard convection driven by a temperature difference between the top and bottom boundaries of the fluid's container. 

Rayleigh-B\'enard convection has been mainly studied in the Boussinesq approximation which is valid only for thin layers of fluid and cannot take into account the full thermodynamic description. We consider here Rayleigh-B\'enard convection in the general compressible Navier-Stokes-Fourier equations, which are well adapted to treat deep convection.

Taking a perfect gas with adiabatic exponent $ \gamma >1$, in a gravitational potential $ \phi = g z$, and assuming $ \lambda = - \frac{2}{d} \mu $ (Stokes hypothesis), we consider the following nondimensional form of the equations:
\begin{align}
 \rho  ( \partial _t u+ u \cdot \nabla u) &= -  \nabla p -  \frac{1}{\rm Fr}\rho\hat z + \frac{1}{\rm Re}  \operatorname{div} \Sigma, \quad &\Sigma &=\operatorname{Def} u - \frac{1}{d} (\operatorname{div}u) \delta, \label{adim_u} \\
T ( \bar D_t s + \operatorname{div}j_s)  &= \frac{1}{\rm Re}  \Sigma   \!: \!\nabla u - j _s \cdot \nabla T, \quad&j_s &=-\frac{1}{\rm Re} \frac{1 }{\rm Pr }  \frac{\gamma }{ \gamma -1}  \frac{1}{T} \nabla   T, \label{adim_T} \\
\partial_t \rho + \dv(\rho u) &= 0, \label{adim_rho}
\end{align}
with Fr, Re, and Pr, the Froude, Reynolds, and Prandtl numbers, respectively.  The internal energy density is $\epsilon(\rho,s) = \rho^\gamma e^{(\gamma-1)s/\rho}$.

We assume that the fluid moves in a 2D domain $[0,2] \times [0,1] \ni( x,z)$ that is periodic in the $x$-direction, 
with no slip boundary condition $u=0$ for the velocity at the top and bottom.
We consider an initial temperature profile $T(z)= 1+ Z(1-z)$ and initial velocity
\[
u(x,z) = \left( 0, \, \psi(x,z) \right),
\]
where
\[
\psi(x,z) = \begin{cases}
\exp\left(\frac{1}{(x-1)^2+(z-0.5)^2-0.2}\right) &\mbox{ if } (x-1)^2+(z-0.5)^2 < 0.2, \\
0 &\mbox{ otherwise. }
\end{cases}
\]
Using the hydrostatic equilibrium $ \nabla p=- \frac{1}{\rm Fr} \rho  \hat z$, one gets the initial mass density and pressure profiles $ \rho  (z) = T(z)^m$ and $p(z)= T(z)^{m+1}$ with $m= \frac{1}{{\rm Fr} Z}-1$ the polytropic index.

In the Boussinesq case the critical conditions for the onset of convection are uniquely determined by a certain value of the Rayleigh number (e.g., \cite{Ch1961}). In the compressible case, the situation is more involved since the critical value depends on other parameters, such as the temperature difference $Z$ (\cite{Bo2001,AlRi2017} and references therein). Following \cite{RoCh2004}, we will use the following expression of the Rayleigh number in the compressible case:
\[
{\rm Ra} = {\rm Re} ^2 (m+1) Z ^2 {\rm Pr} ( 1- ( \gamma -1) m)/ \gamma .
\]

\begin{figure}[t]
\centering
\includegraphics[scale=0.245,trim=1.85in 0.7in 1.9in 0.6in,clip=true]{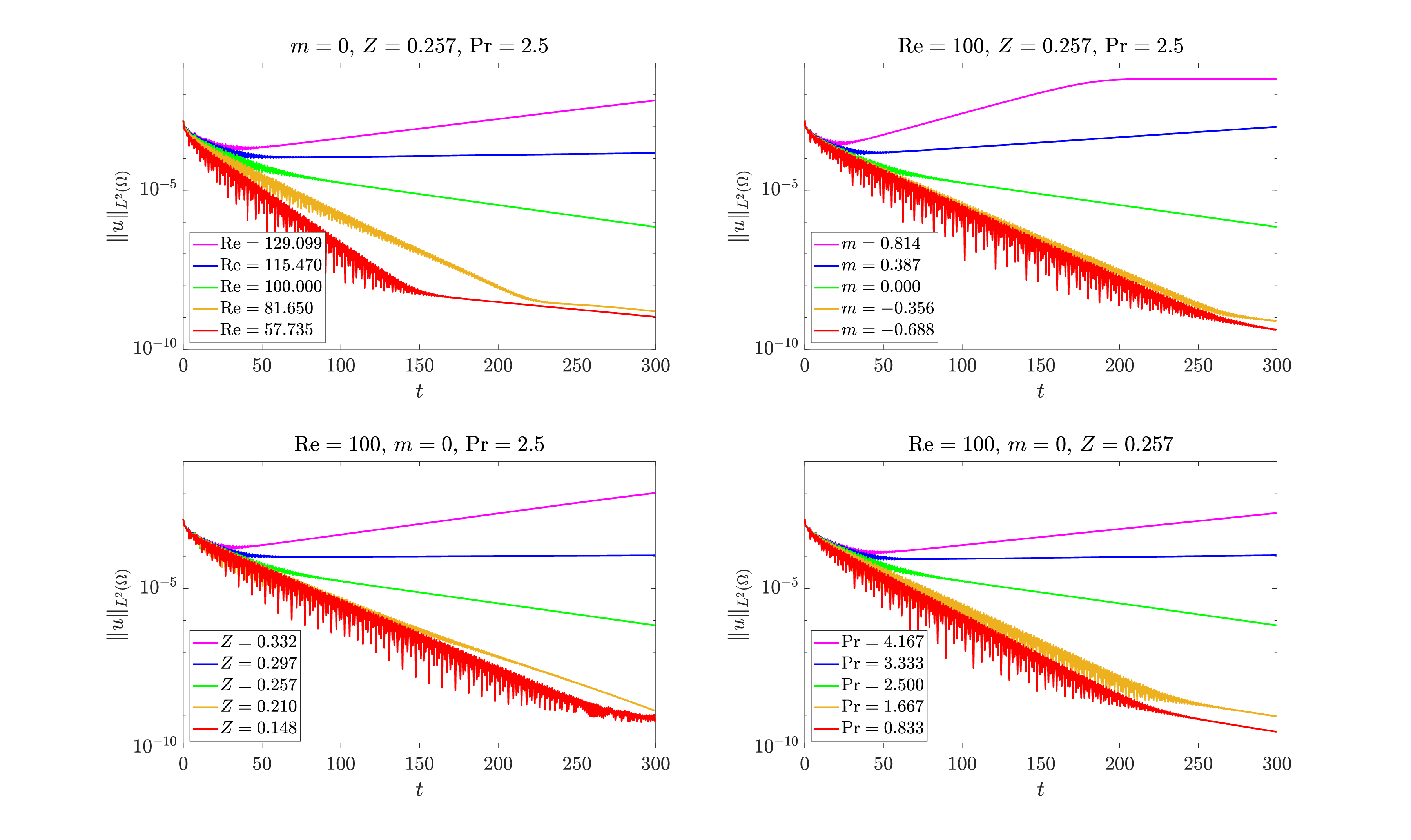}
\caption{The onset of Rayleigh-B\'enard convection with Dirichlet boundary conditions.}
\label{fig:Dirichlet}
\end{figure}

\begin{figure}[t]
\centering
\includegraphics[scale=0.245,trim=1.85in 0.7in 1.9in 0.6in,clip=true]{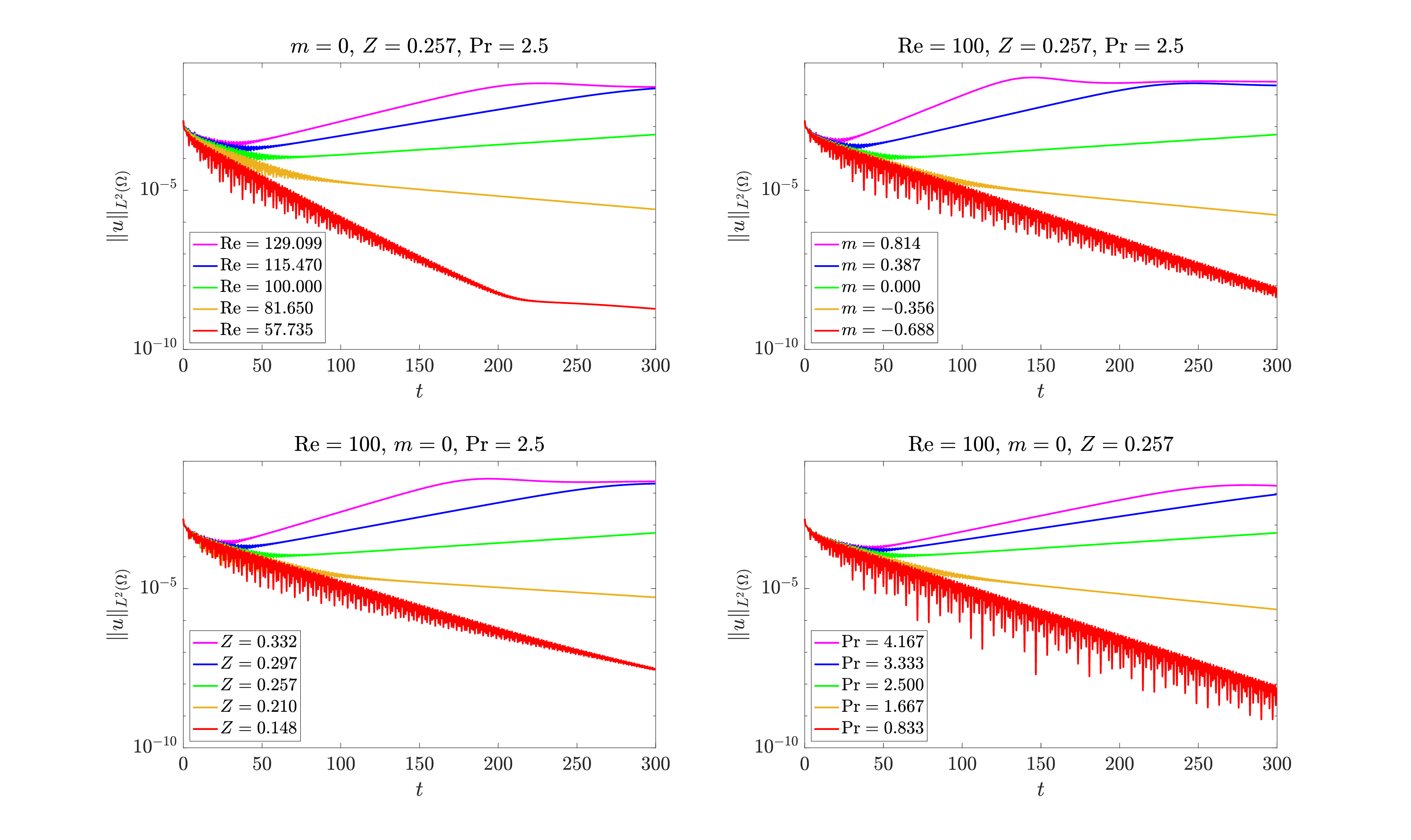}
\caption{The onset of Rayleigh-B\'enard convection with nonhomogeneous Neumann boundary conditions.}
\label{fig:Neumann}
\end{figure}

\paragraph{Dirichlet boundary conditions.}
In the series of experiments below, we fix $\gamma=1.1$ and test the onset of Rayleigh-B\'enard convection for several values of the four parameters Re, $m$, $Z$, Pr, for Dirichlet boundary conditions $T(x,0)=1+Z$ and $T(x,1)=1$, corresponding to the case of ideally conducting upper and lower plates.
Starting with $\mathrm{Re}=100$, $m=0$, $Z=0.256905$, and $\mathrm{Pr}=2.5$, which corresponds to a flow with Rayleigh number 1500, we varied each of the four parameters individually to realize flows with Rayleigh numbers $\mathrm{Ra} \in \{500,1000,1500,2000,2500\}$.  We ran each of these simulations from time $t=0$ to time $t=300$ and plotted the evolution of the $L^2$-norm of the velocity field in Figure~\ref{fig:Dirichlet}.  In our simulations, we used a uniform triangulation of $\Omega$ with $h=\frac{\sqrt{2}}{16}$, a time step $\Delta t = 0.4$, penalty parameter $\eta=0.01\kappa$ (where $\kappa = \frac{1}{\mathrm{Re}}\frac{1}{\mathrm{Pr}}\frac{\gamma}{\gamma-1}$), and finite element spaces $V_h=DG_1(\mathcal{T}_h)$ and $U_h^{\rm grad}=CG_2(\mathcal{T}_h)^2$.  One can see in Figure~\ref{fig:Dirichlet} that in each experiment, the  flow begins to exhibit instability when the Rayleigh number exceeds a critical value lying somewhere between $\mathrm{Ra}=1500$ (the green curves) and $\mathrm{Ra}=2000$ (the blue curves), regardless of which of the four parameters Re, $m$, $Z$, Pr is varied.

\paragraph{Prescribed heat flux boundary conditions.} In order to illustrate the impact of the boundary condition on the critical value of the Rayleigh number, we repeat below the same experiments, now with prescribed heat flux boundary conditions $j_q \cdot n= q_0$ (nonhomogeneous Neumann boundary conditions). This boundary condition describes cases where the upper and lower plates conduct heat poorly compared with the fluid. 
In order to use exactly the same initial temperature profile, we set $q_0(x,0)= - \frac{1}{\rm Re} \frac{1 }{\rm Pr }  \frac{\gamma }{ \gamma -1}  Z$, $q_0(x,1)=  \frac{1}{\rm Re} \frac{1 }{\rm Pr }  \frac{\gamma }{ \gamma -1}  Z$.  This change of boundary condition is known to have a significant impact on the onset of convection by decreasing the critical Rayleigh number, see \cite{HuJaPi1967,ChPr1980} for the Boussinesq case.
Figure~\ref{fig:Neumann} confirms this; one can see in Figure~\ref{fig:Neumann} that the flow begins to exhibit instability when the Rayleigh number exceeds a critical value lying somewhere between $\mathrm{Ra}=1000$ (the yellow curves) and $\mathrm{Ra}=1500$ (the green curves), regardless of which of the four parameters Re, $m$, $Z$, Pr is varied.

We also repeated the above experiment with $\mathrm{Re}=100$, $m=0$, $Z=2$, and $\mathrm{Pr}=2.5$, which corresponds to a much higher Rayleigh number $\mathrm{Ra}= 90909.1$.  We fixed $\Delta t = \frac{1}{80}$ and tested several values of $h$. Plots of the computed temperature at time $t = 10$ are shown in the leftmost column of Figure~\ref{fig:contours}.  Note that since the prescribed heat flux $q_0$ satisfies $\int_{z=0} q_0 \, {\rm d}a + \int_{z=1} q_0 \, {\rm d}a = 0$, this system conserves energy (and mass) exactly in the continuous setting.  The leftmost plot in Figure~\ref{fig:energy} shows that energy and mass are conserved to machine precision in the discrete setting as well.

\subsection{Comparison}

To see how our scheme compares with others in the literature, we implemented and tested the finite volume scheme proposed in \cite{BaLuMiShYu2023,LuShYu2023}.  This scheme uses a formulation of the Navier-Stokes-Fourier equations in terms of $u$, $T$, and $\rho$ which, for a perfect gas in the presence of gravity, has nondimensional form
\begin{align}
\partial_t (\rho u) + \dv(\rho u \otimes u) &= \dv\left( \frac{\Sigma}{\rm Re} - p\delta \right) - \frac{1}{\mathrm{Fr}} \rho \hat{z}, &\Sigma &=\operatorname{Def} u - \frac{1}{d} (\operatorname{div}u) \delta, \nonumber \\%\label{cons_form_u} \\
C_V \left(\partial_t (\rho T) + \dv(\rho T u) \right) &= \left( \frac{\Sigma}{\rm Re} - p\delta\right) : \nabla u - \dv(Tj_s), \hspace{-0.05in}&j_s &=-\frac{1}{\rm Re} \frac{1 }{\rm Pr }  \frac{\gamma }{ \gamma -1}  \frac{1}{T} \nabla   T, \nonumber \\% \label{cons_form_T} \\
\partial_t \rho + \dv(\rho u) &= 0, && \nonumber %\label{cons_form_rho}
\end{align}
where $C_V = \frac{1}{\gamma-1}$ and $p=\rho T$.  
We will detail the scheme below, beginning with the case where periodic boundary conditions are imposed on $\partial\Omega$.  The scheme discretizes all variables with piecewise constant functions on a regular quadrilateral mesh $\mathcal{T}_h$ of $\Omega$.   In our notation (which differs considerably from \cite{BaLuMiShYu2023,LuShYu2023}), it seeks $u \in  DG_0(\mathcal{T}_h)^d$ and $T,\rho \in  DG_0(\mathcal{T}_h)$ such that
\begin{align}
\langle \partial_t (\rho u), v \rangle + \beta_h(v,\rho u, u) &= -\alpha_h( v, \Xi) - \frac{1}{\mathrm{Fr}} \langle \rho \hat{z}, v \rangle, &&\forall v \in DG_0(\mathcal{T}_h)^d, \label{FV_u} \\
C_V \left( \langle \partial_t(\rho T), w \rangle + \beta_h(w,\rho T, u) \right) &= \alpha_h( u, w \Xi ) - \gamma_h(T,w), &&\forall w \in DG_0(\mathcal{T}_h), \label{FV_T} \\
\langle \partial_t \rho, \theta \rangle + \beta_h(\theta,\rho,u) &= 0, &&\forall \theta \in DG_0(\mathcal{T}_h), \label{FV_rho}
\end{align}
where
\[
\Xi = \frac{1}{\mathrm{Re}} \left( \frac{1}{2}(X + X^\mathsf{T}) - \frac{1}{d} \operatorname{Tr} X \right) - \rho T \delta
\]
and $X \in DG_0(\mathcal{T}_h)^{d \times d}$ is determined from
\[
\langle X, Y \rangle = \alpha_h \left(u, Y  \right), \quad \forall Y \in DG_0(\mathcal{T}_h)^{d \times d}.
\]
Here, the maps $\alpha_h$, $\beta_h$, and $\gamma_h$ have the following definitions.  The map $\alpha_h$ is given by
\[
\alpha_h(v,\Xi) = \sum_{e \in \mathcal{E} ^0_h}\int_e \{v\} \cdot \llbracket \Xi \rrbracket \, {\rm d} a,
\]
where $\{v\}$ denotes the average of a vector field $v$ and $\llbracket \Xi \rrbracket$ denotes the jump of a tensor field $\Xi$ across $e = K_1 \cap K_2 \in \mathcal{E}_h^0$.  These are defined by
\[
\{v\} = \frac{1}{2}(v_1+v_2), \quad \llbracket \Xi \rrbracket = \Xi_1 n_1 + \Xi_2 n_2,
\]
where $v_i=v|_{K_i}$, $\Xi_i=\Xi|_{K_i}$, and $n_i$ is the unit normal vector to $e$ pointing outward from $K_i$.  The map $\beta_h(\cdot,\cdot,u)$ has two meanings. For scalar fields $f$ and $g$, it is the bilinear form
\begin{align}
\beta_h(f,g,u) 
&= \sum_{e \in \mathcal{E} ^0_h}\int_e (\{u\} \cdot \llbracket f \rrbracket \{g\} + (\beta_e(\{u\}) + h^\xi) \llbracket f \rrbracket \cdot \llbracket g \rrbracket) \, {\rm d} a, \label{bh_FV}
\end{align}
where $\beta_e(\{u\})=\frac{1}{2}|\{u\} \cdot n|$ and $\xi \in (0,1)$.  (We used $\xi=\frac{1}{2}$ together with the smooth approximation~\eqref{abs} of $\beta_e(\{u\})$ in our implementation.)  When $f$ and $g$ are vector fields, $\beta_h(f,g,u)$ is understood as $\sum_{i=1}^d \beta_h(f_i,g_i,u)$.  Lastly,
\[
\gamma_h(T,w) = \sum_{e \in \mathcal{E} ^0_h}\int_e \frac{\kappa}{h}  \llbracket T \rrbracket \cdot \llbracket w \rrbracket \, {\rm d}a.
\]
This completes the specification of the scheme when periodic boundary conditions are imposed.  Notice that $\alpha_h(v,\Xi)$ approximates $-\int_\Omega v \cdot \dv \Xi \, {\rm d}x$, and when $u$ is continuous, $\beta_h(f,g,u)$ is the same as our $\bar{b}_h(u;f,g,u)$ for scalar fields $f$ and $g$, up to the addition of a term proportional to $h^\xi$ that \cite{BaLuMiShYu2023,LuShYu2023} include for stability reasons.

To impose the no-slip boundary condition $u=0$ on $\partial \Omega$, we follow \cite{LuShYu2023} and penalize $u|_{\partial\Omega}$ by adding $\frac{1}{\varepsilon} \int_{\partial\Omega} u \cdot v \, {\rm d}a$ to the right-hand side of~\eqref{FV_u}, where $\varepsilon$ is an $h$-dependent penalty parameter.  (We took $\varepsilon=h^2$ in our implementation, following Theorem 6.2 in \cite{BaLuMiShYu2023}.)  Likewise, to impose $T=T_0$ on $\partial\Omega$, we follow \cite{BaLuMiShYu2023} and add $\frac{1}{\varepsilon} \int_{\partial\Omega} (T-T_0) w \, {\rm d}a$ to the right-hand side of~\eqref{FV_T}.  To impose $Tj_s \cdot n = q_0$ on $\partial\Omega$, we subtract $\int_{\partial\Omega} q_0 w \, {\rm d}a$ from the right-hand side of~\eqref{FV_T}.

We discretized~\eqref{FV_u}-\eqref{FV_rho} in time with
%\begin{align*}
%\langle D_{\Delta t} (\rho u), v \rangle + \beta_h(v,(\rho u)_{k+1/2}, u_{k+1/2}) &= -\alpha_h( v, \Xi_{k+1/2}) - \frac{1}{\mathrm{Fr}} \langle \rho_{k+1/2} \hat{z}, v \rangle, &&\forall v \in DG_0(\mathcal{T}_h)^d, \\% \label{FV_u_dt} \\
%C_V \left( \langle D_{\Delta t} (\rho T), w \rangle + \beta_h(w,(\rho T)_{k+1/2}, u_{k+1/2}) \right) &= \alpha_h( u_{k+1/2}, w \Xi_{k+1/2} ) - \gamma_h(T_{k+1/2},w), &&\forall w \in DG_0(\mathcal{T}_h), \\%\label{FV_T_dt} \\
%\langle D_{\Delta t} \rho, \theta \rangle + \beta_h(\theta,\rho_{k+1/2},u_{k+1/2}) &= 0, &&\forall \theta \in DG_0(\mathcal{T}_h), %\label{FV_rho_dt}
%\end{align*}
\begin{align*}
\langle D_{\Delta t} (\rho u), v \rangle &+ \beta_h(v,(\rho u)_{k+1/2}, u_{k+1/2}) && \\&\quad = -\alpha_h( v, \Xi_{k+1/2}) - \frac{1}{\mathrm{Fr}} \langle \rho_{k+1/2} \hat{z}, v \rangle, &&\forall v \in DG_0(\mathcal{T}_h)^d, \\% \label{FV_u_dt} \\
C_V \big( \langle D_{\Delta t} (\rho T), w \rangle &+ \beta_h(w,(\rho T)_{k+1/2}, u_{k+1/2}) \big) && \\&\quad = \alpha_h( u_{k+1/2}, w \Xi_{k+1/2} ) - \gamma_h(T_{k+1/2},w), &&\forall w \in DG_0(\mathcal{T}_h), \\%\label{FV_T_dt} \\
\langle D_{\Delta t} \rho, \theta \rangle &+ \beta_h(\theta,\rho_{k+1/2},u_{k+1/2}) = 0, &&\forall \theta \in DG_0(\mathcal{T}_h), %\label{FV_rho_dt}
\end{align*}
where $u_{k+1/2} = \frac{u_k+u_{k+1}}{2}$, $\rho_{k+1/2} = \frac{\rho_k+\rho_{k+1}}{2}$, $T_{k+1/2} = \frac{T_k+T_{k+1}}{2}$, $(\rho u)_{k+1/2} = \frac{\rho_k u_k + \rho_{k+1} u_{k+1}}{2}$, $(\rho T)_{k+1/2} = \frac{\rho_k T_k + \rho_{k+1} T_{k+1}}{2}$,
\[
\Xi_{k+1/2} = \frac{1}{\mathrm{Re}} \left( \frac{1}{2}(X_{k+1/2} + X_{k+1/2}^\mathsf{T}) - \frac{1}{d} \operatorname{Tr} X_{k+1/2} \right) - (\rho T)_{k+1/2} \delta,
\]
and
\[
\langle X_{k+1/2}, Y \rangle = \alpha_h \left(u_{k+1/2}, Y  \right), \quad \forall Y \in DG_0(\mathcal{T}_h)^{d \times d}.
\]

Using the above finite volume scheme, we ran the same experiment described in the last paragraph of Section~\ref{sec:rayleigh}, this time with smaller values of $h$ (detailed in the caption of Fig.~\ref{fig:contours}) to ensure that the experiments with the above finite volume scheme used a similar number of degrees of freedom as the experiments we did with our scheme.  (Our scheme used 1056, 4160, 16512, and 65792 degrees of freedom in the four experiments of Fig.~\ref{fig:contours}, while the other used 2048, 8192, 32768, and 131072.) 

The results obtained with the finite volume scheme are shown in the second column of Fig.~\ref{fig:contours}.  At low resolutions, the temperature contours are qualitatively incorrect due to excessive numerical diffusion.  This appears to be due primarily to the term proportional to $h^\xi$ that appears in~\eqref{bh_FV}.  Removing it improved the appearance of the temperature contours significantly; see the rightmost column in Fig.~\ref{fig:contours}.  Nevertheless, the finite volume scheme still artificially dissipates energy, regardless of whether the term proportional to $h^\xi$ is included or not; see the middle and rightmost columns of Fig.~\ref{fig:energy}.

\begin{figure}
\centering
\includegraphics[width=0.32\textwidth]{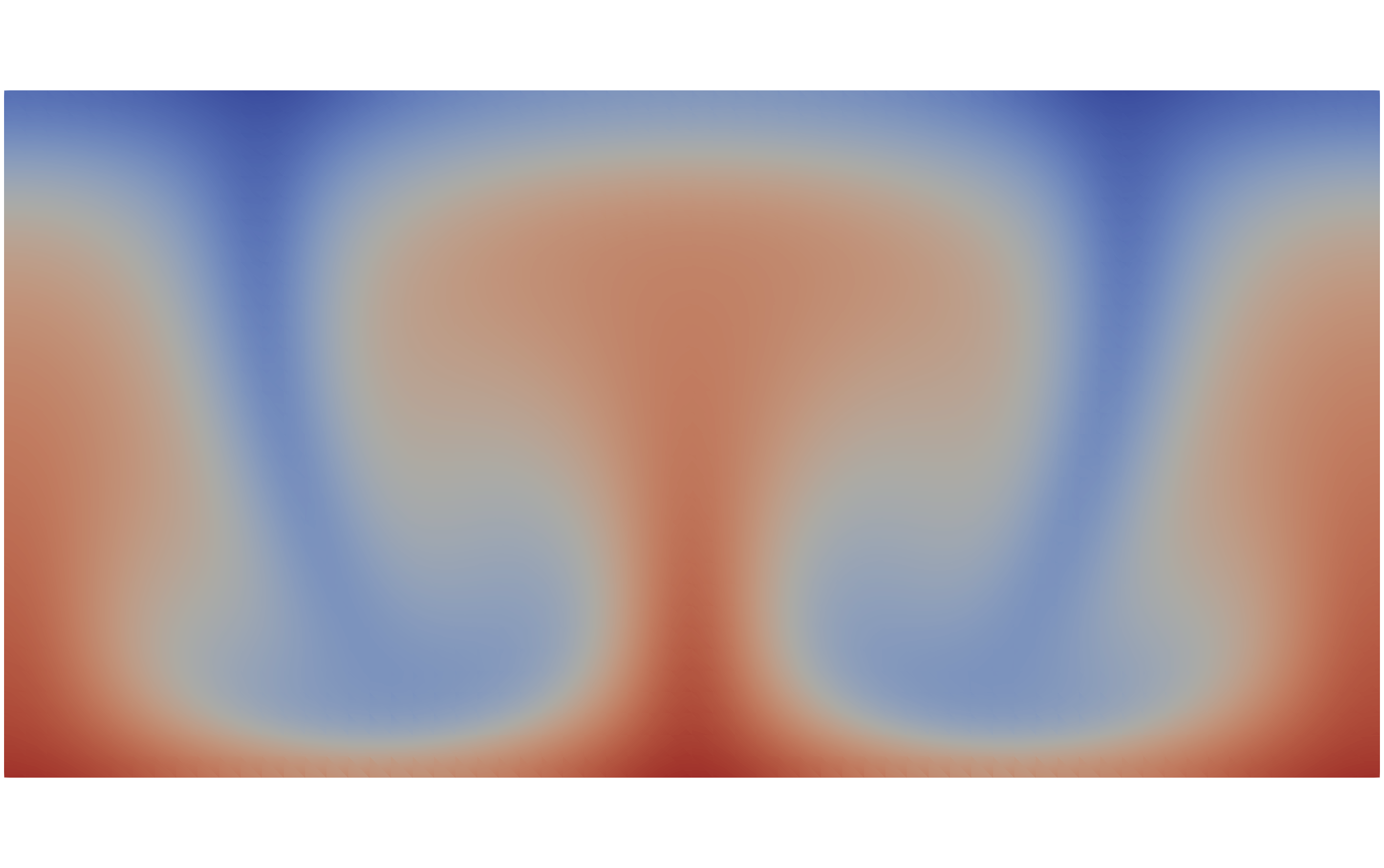}  \includegraphics[width=0.32\textwidth]{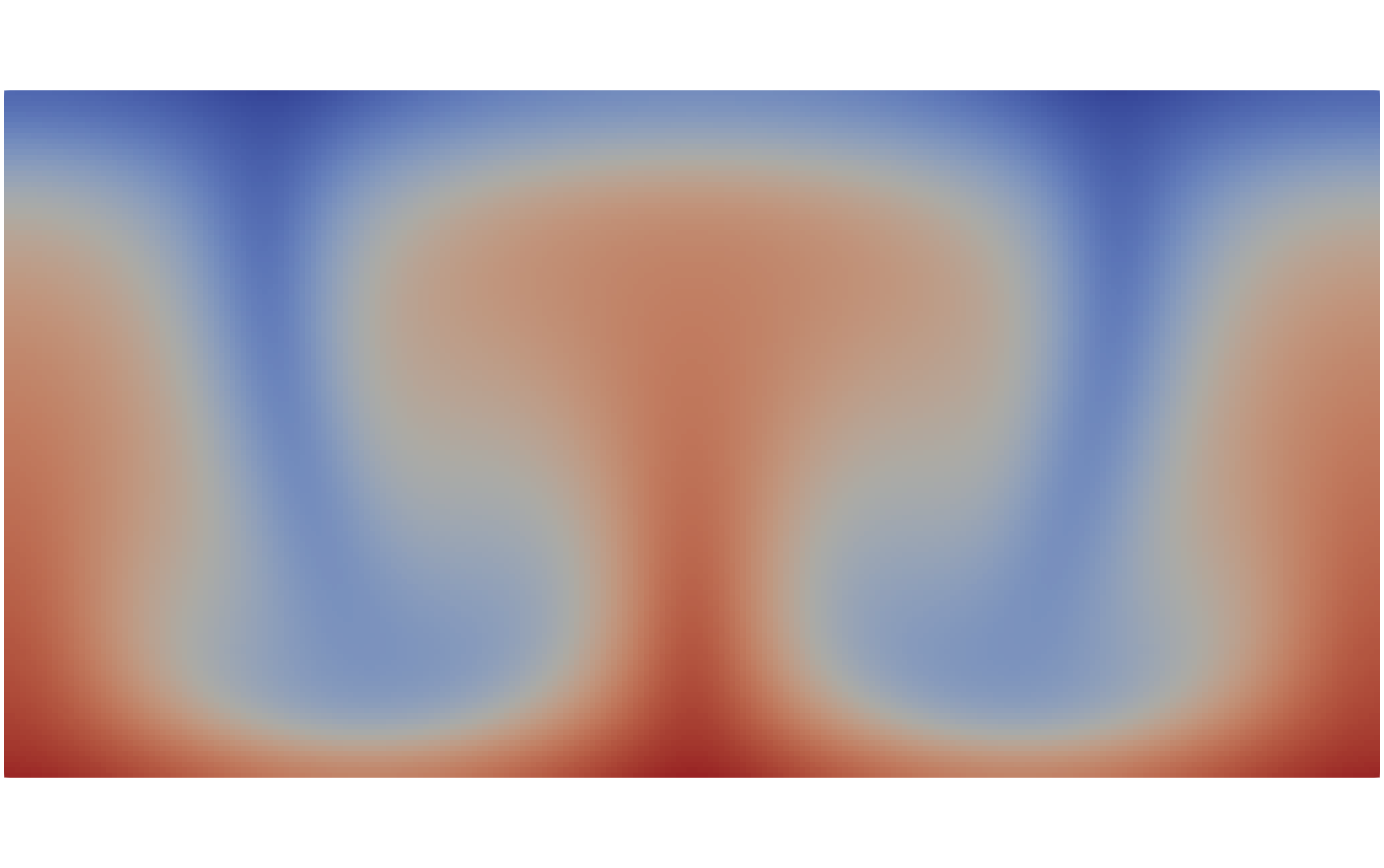} \includegraphics[width=0.32\textwidth]{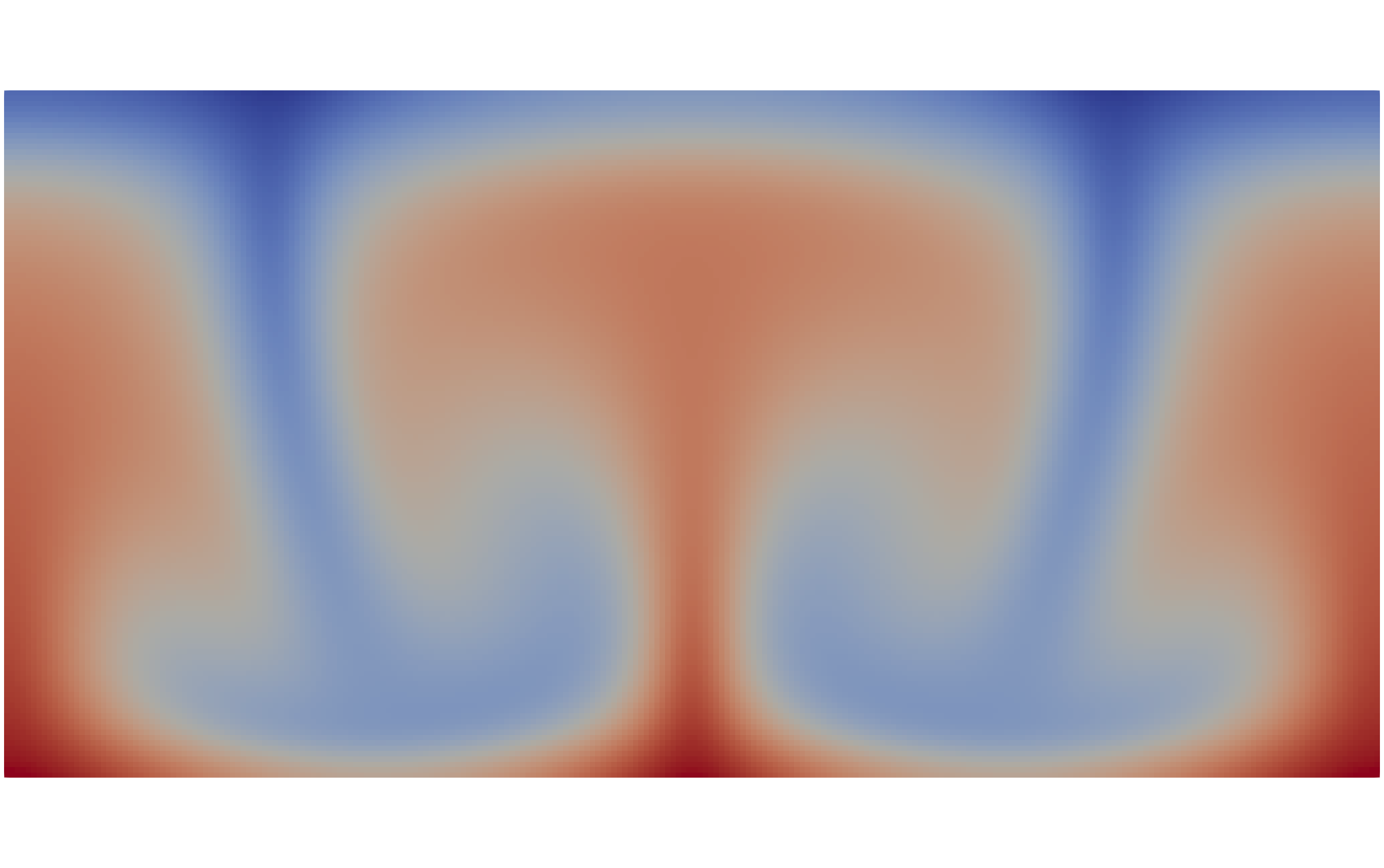} \\ \vspace{-0.2in}
\includegraphics[width=0.32\textwidth]{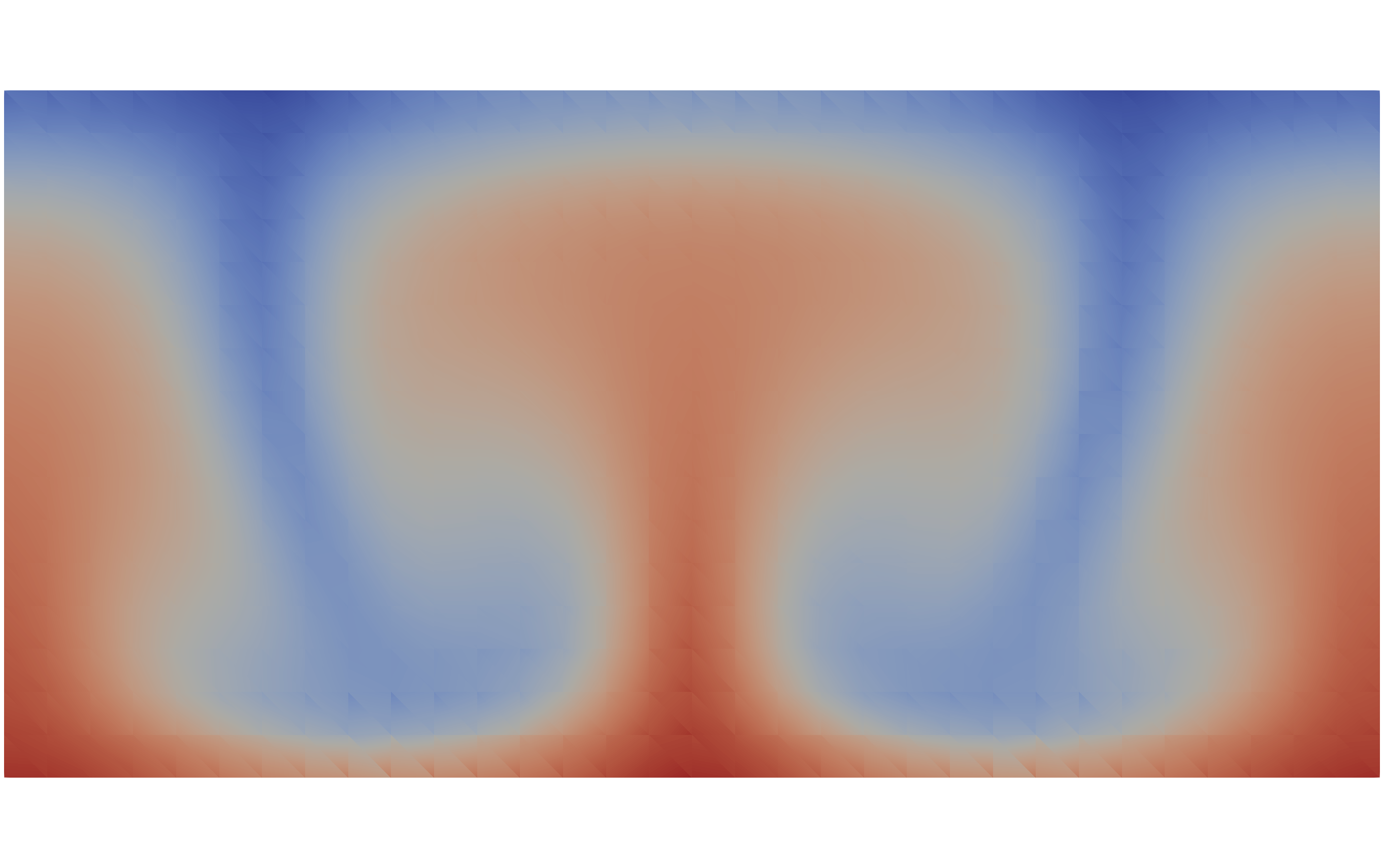}  \includegraphics[width=0.32\textwidth]{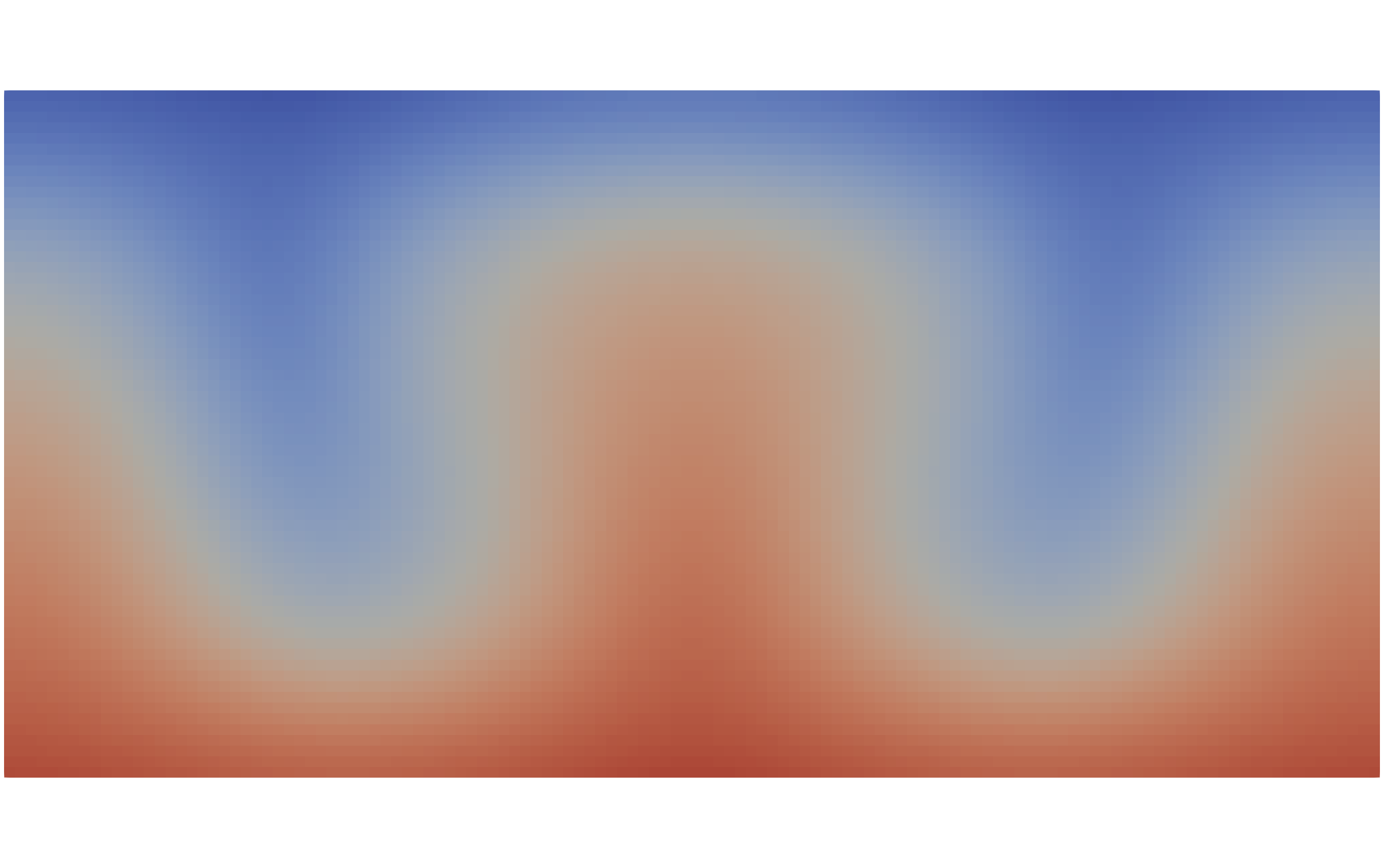} \includegraphics[width=0.32\textwidth]{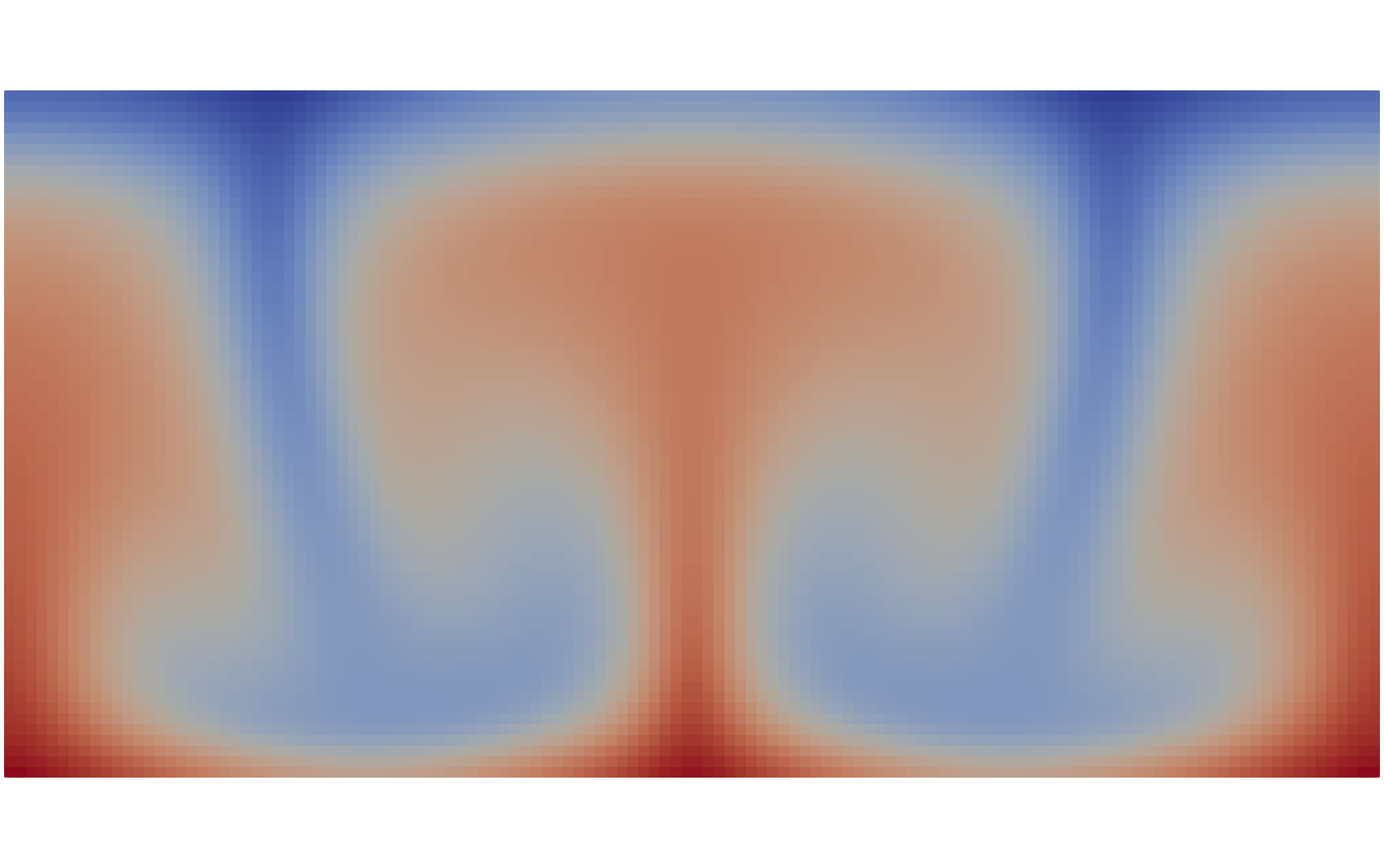} \\ \vspace{-0.2in}
\includegraphics[width=0.32\textwidth]{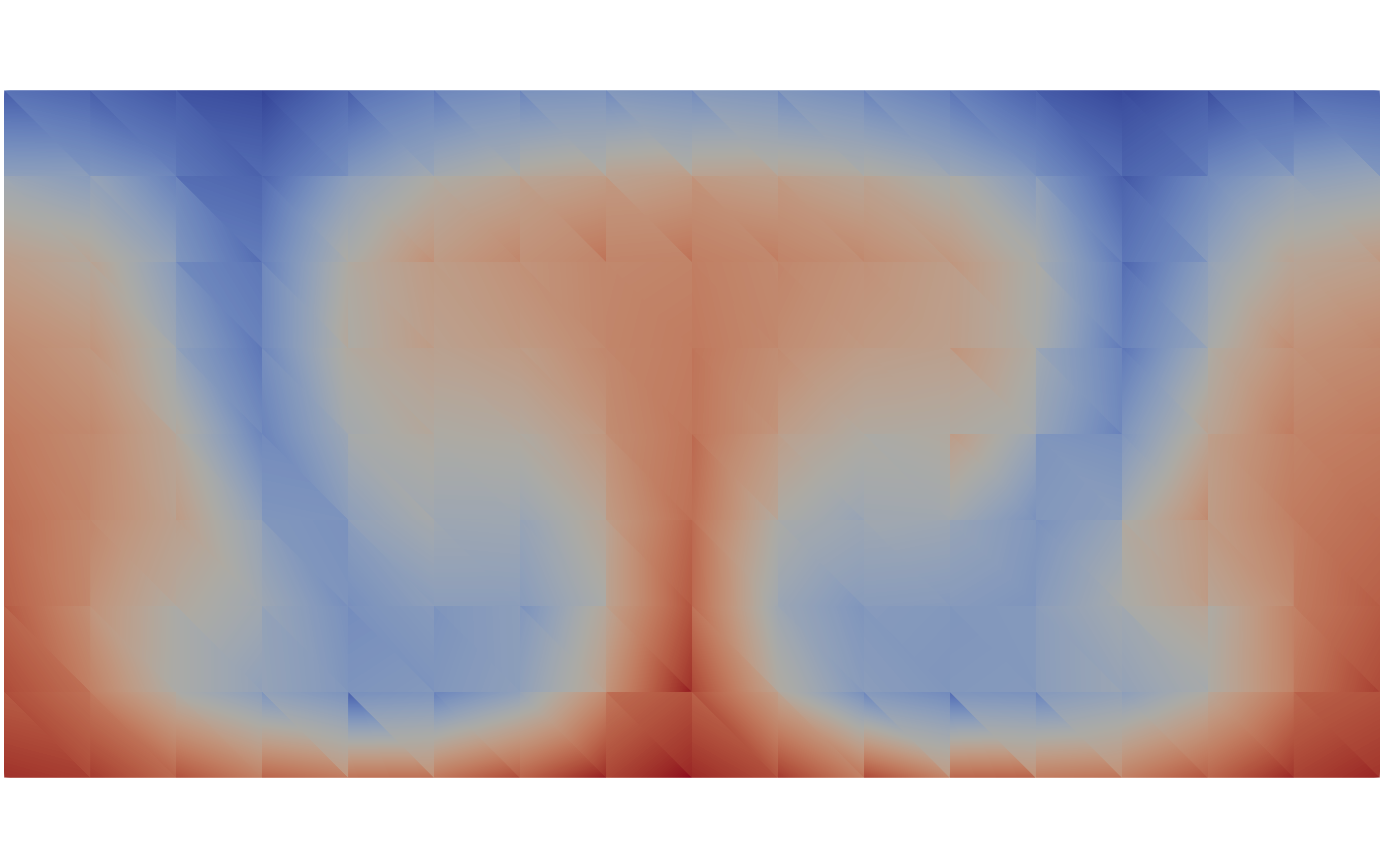}  \includegraphics[width=0.32\textwidth]{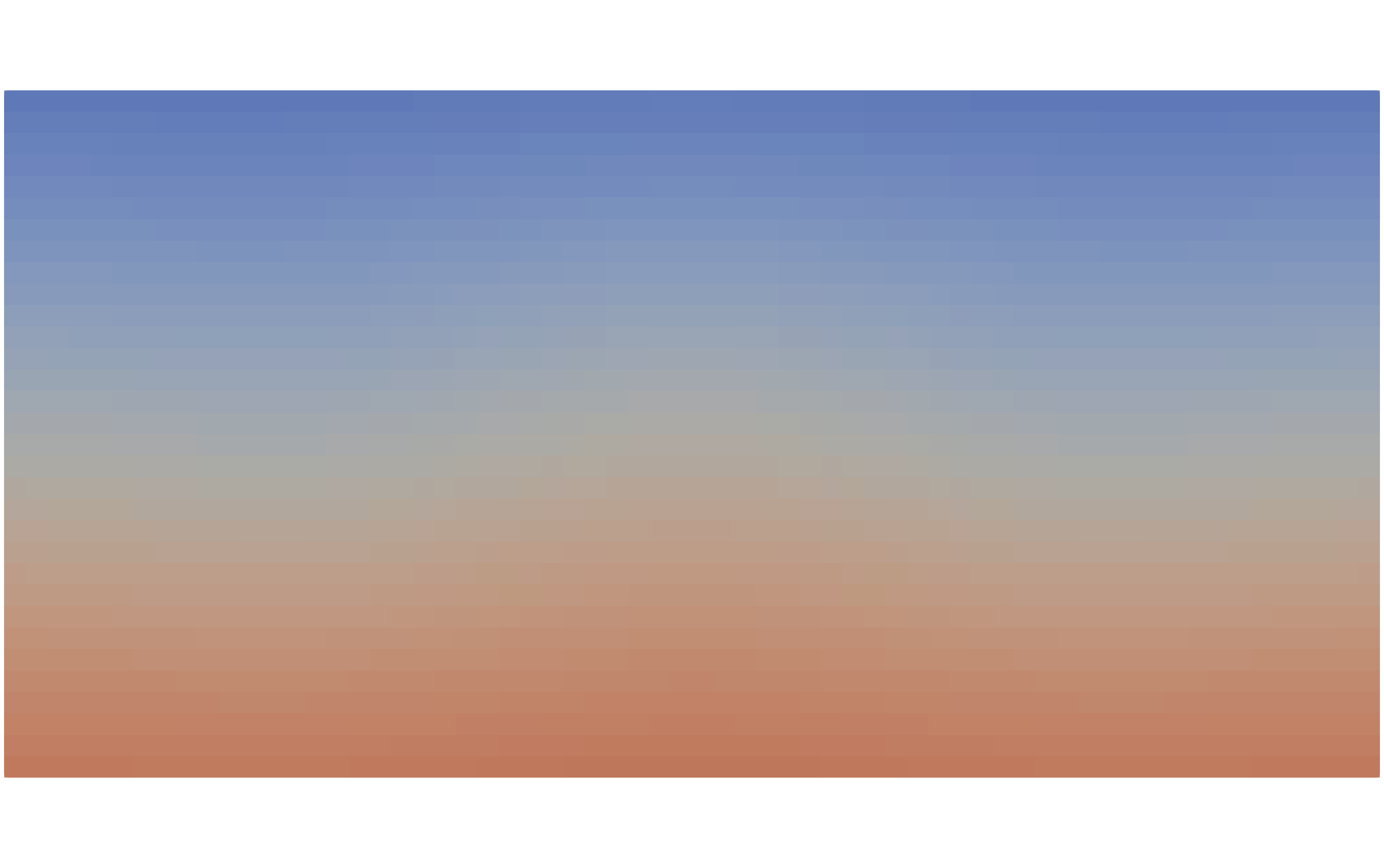} \includegraphics[width=0.32\textwidth]{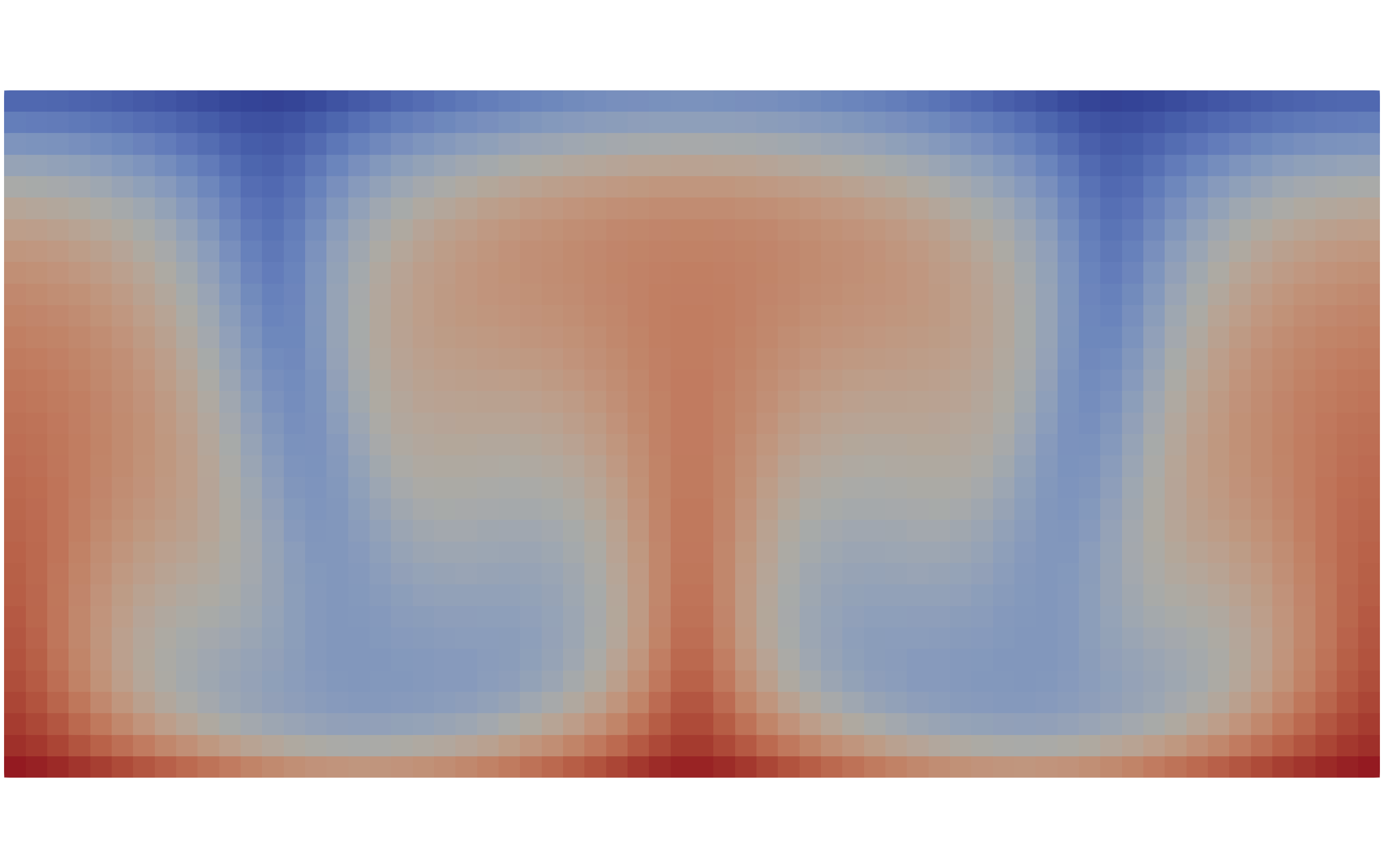} \\ \vspace{-0.2in}
\includegraphics[width=0.32\textwidth]{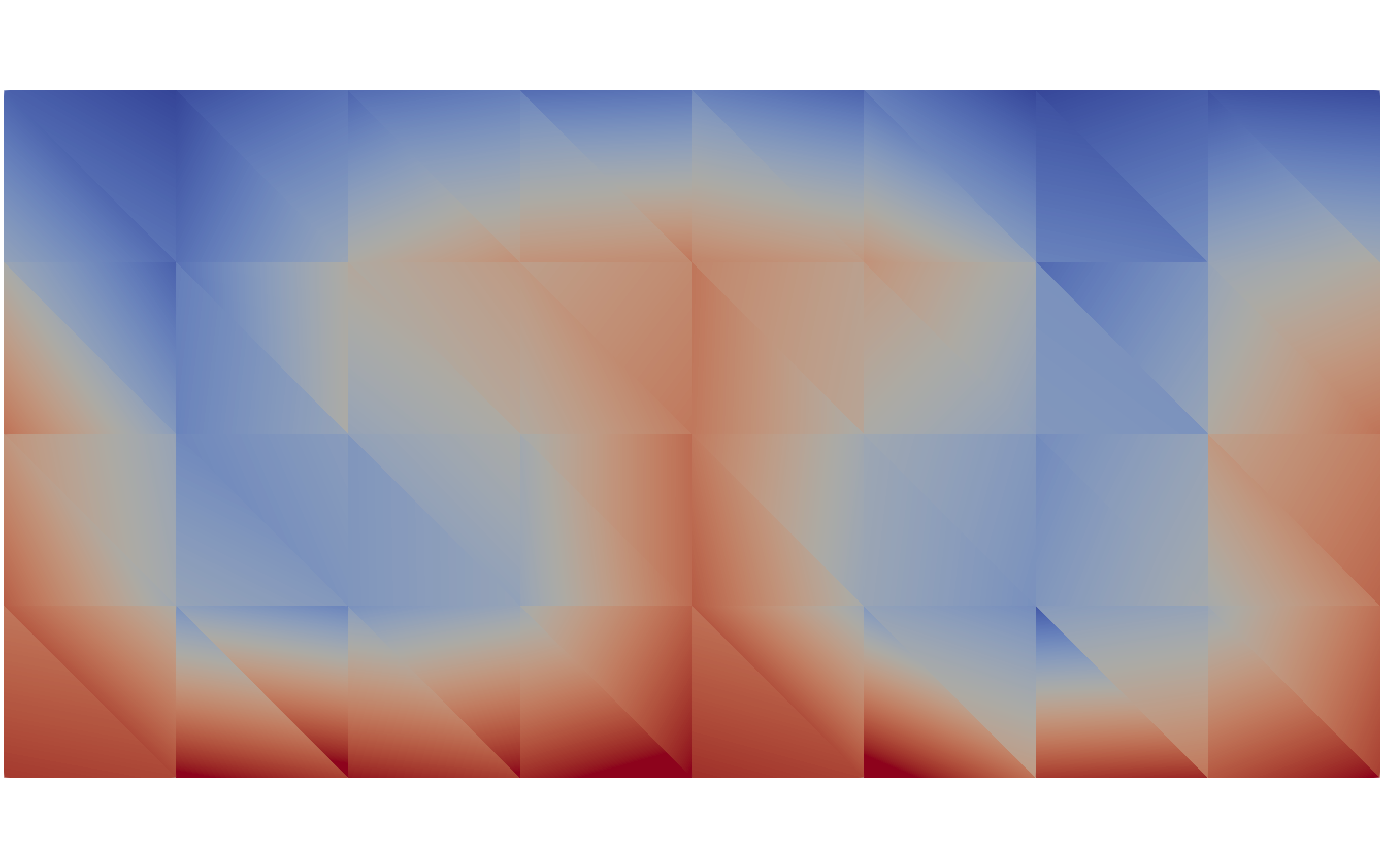}  \includegraphics[width=0.32\textwidth]{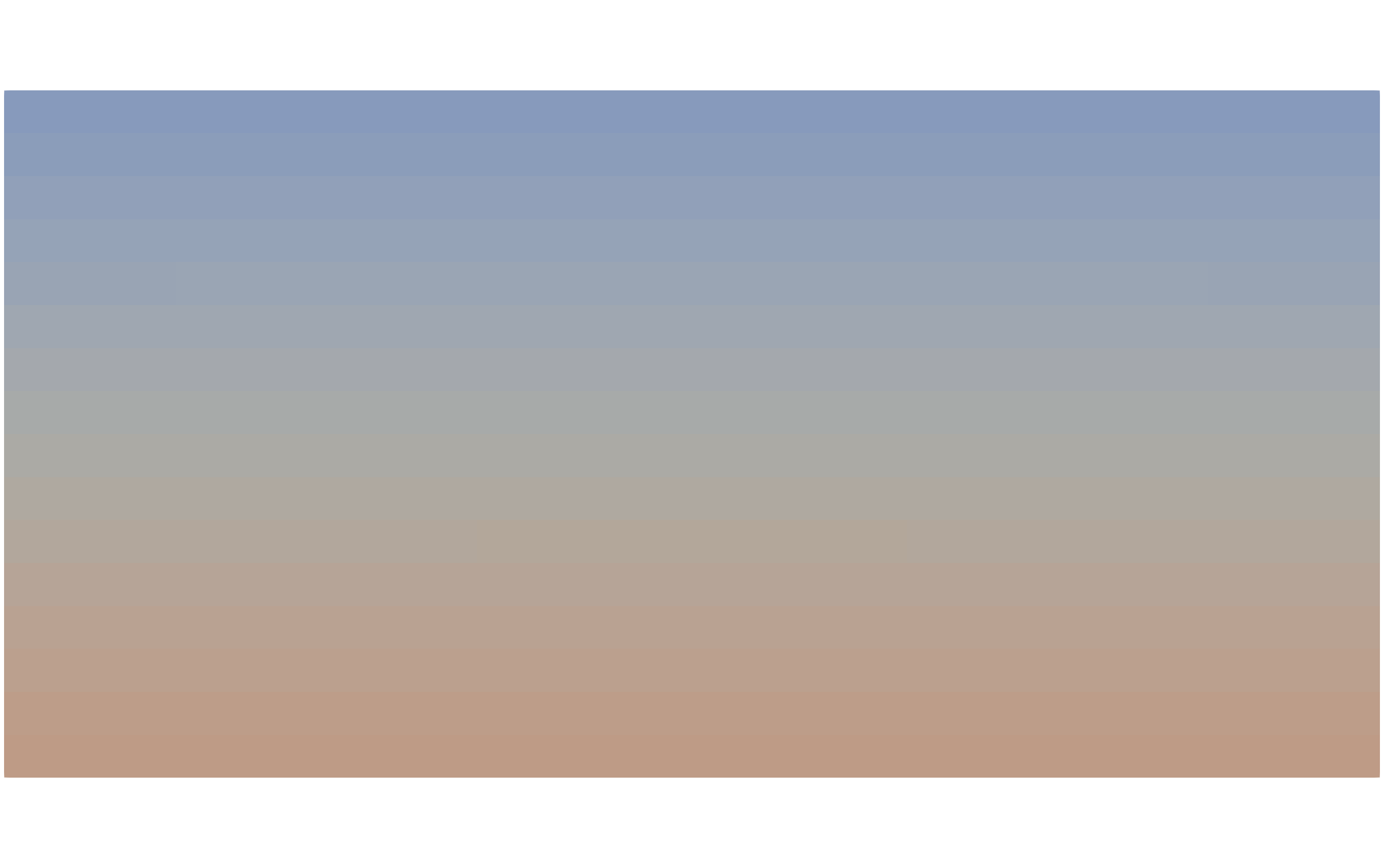} \includegraphics[width=0.32\textwidth]{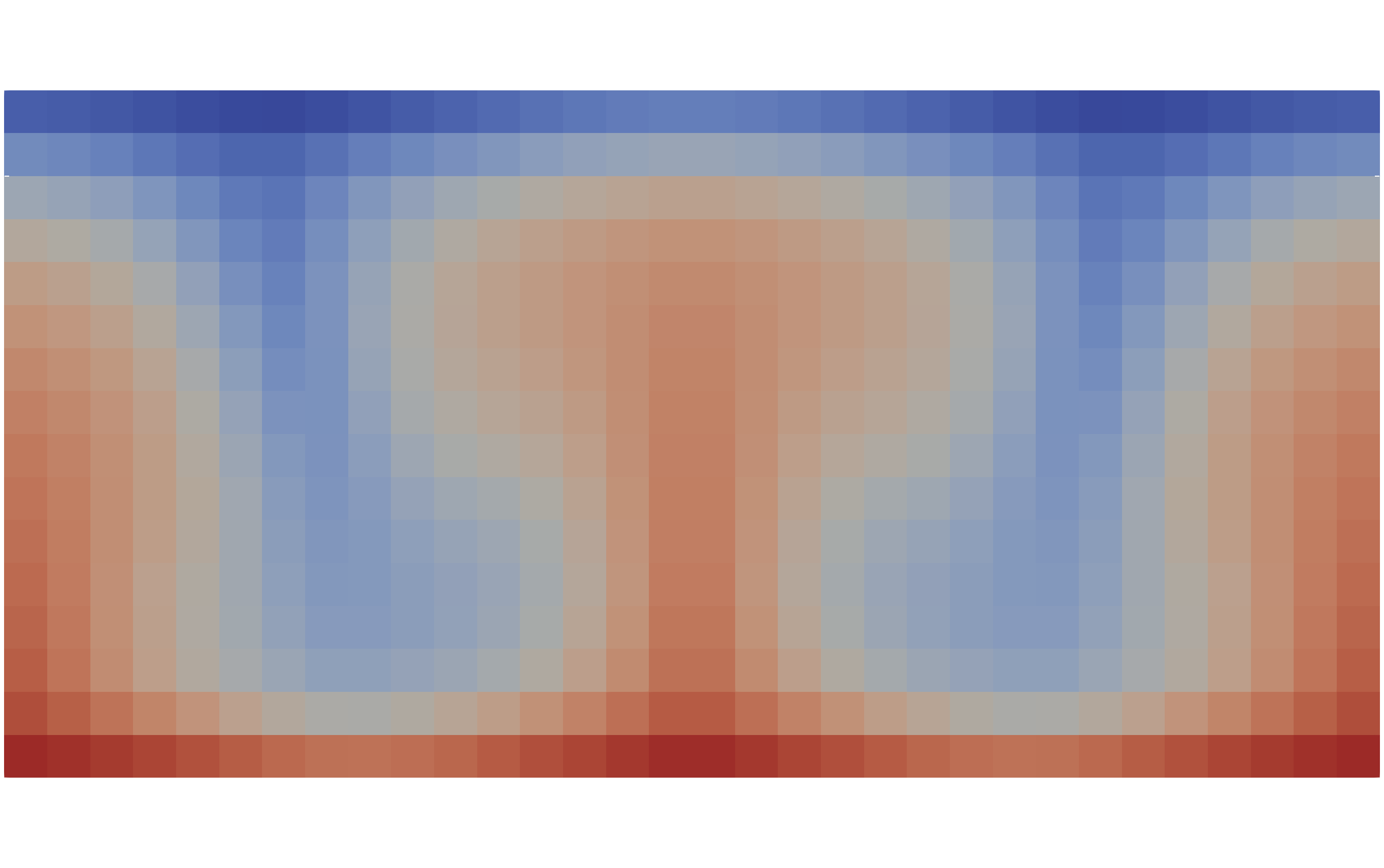}
\caption{Temperature contours at time $t=10$ during a simulation of Rayleigh-B\'enard convection with $\mathrm{Re}=100$, $m=0$, $Z=2$, and $\mathrm{Pr}=2.5$, so that $\mathrm{Ra}= 90909.1$.  Left column: Our scheme with $h \in \{\frac{\sqrt{2}}{4}, \frac{\sqrt{2}}{8}, \frac{\sqrt{2}}{16}, \frac{\sqrt{2}}{32} \}$, ordered from smallest $h$ to largest.  Middle column: The scheme~\eqref{FV_u}-\eqref{FV_rho} with $h \in \{\frac{1}{16}, \frac{1}{32}, \frac{1}{64}, \frac{1}{128} \}$, ordered from smallest $h$ to largest.  Right column: Same as the middle column, but with the term proportional to $h^\xi$ excluded from~\eqref{bh_FV}.} \label{fig:contours}
\end{figure}

\begin{figure}
\centering
\includegraphics[width=0.32\textwidth,trim=3.8in 0.3in 5.7in 0.5in,clip=true]{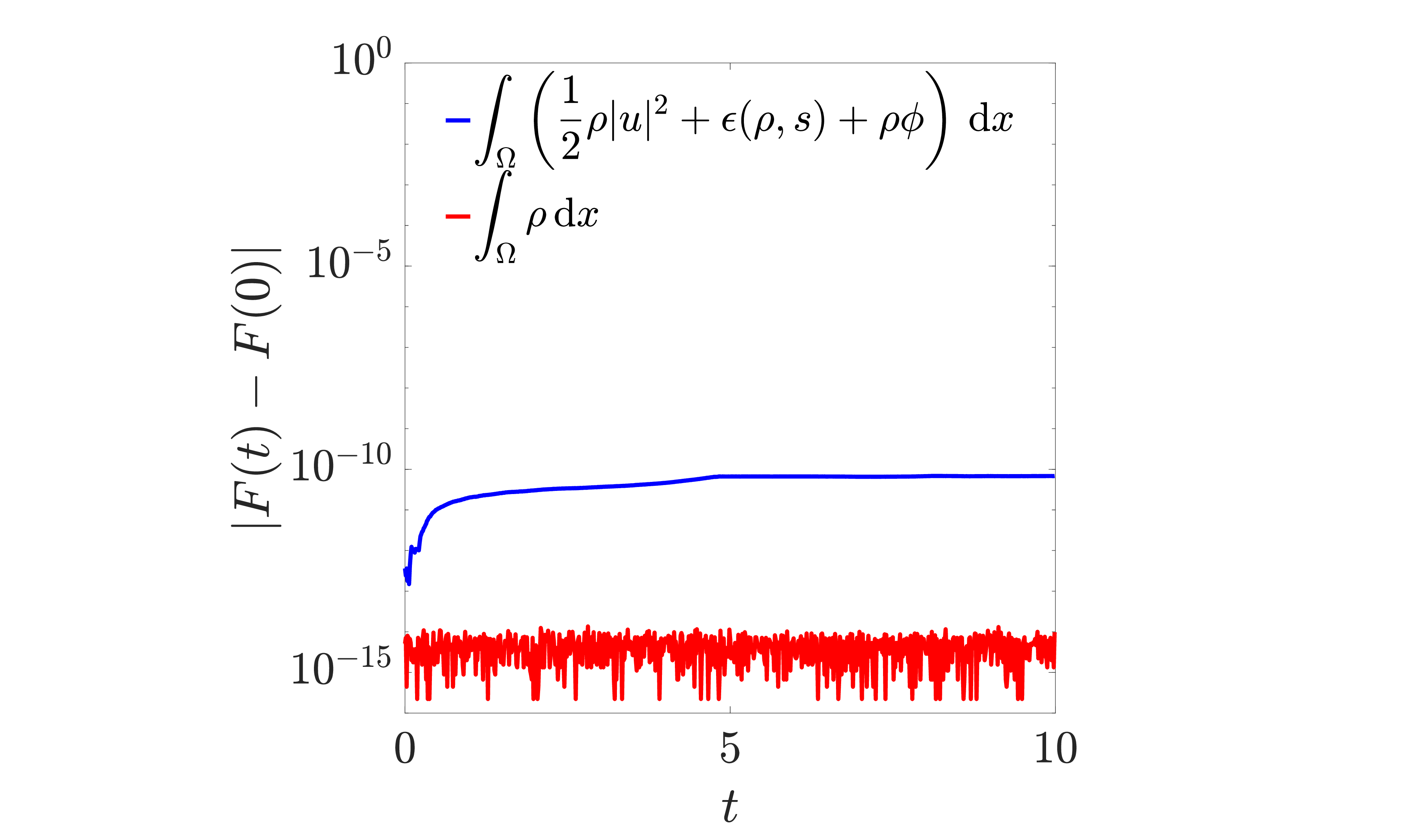} 
\includegraphics[width=0.32\textwidth,trim=3.8in 0.3in 5.7in 0.5in,clip=true]{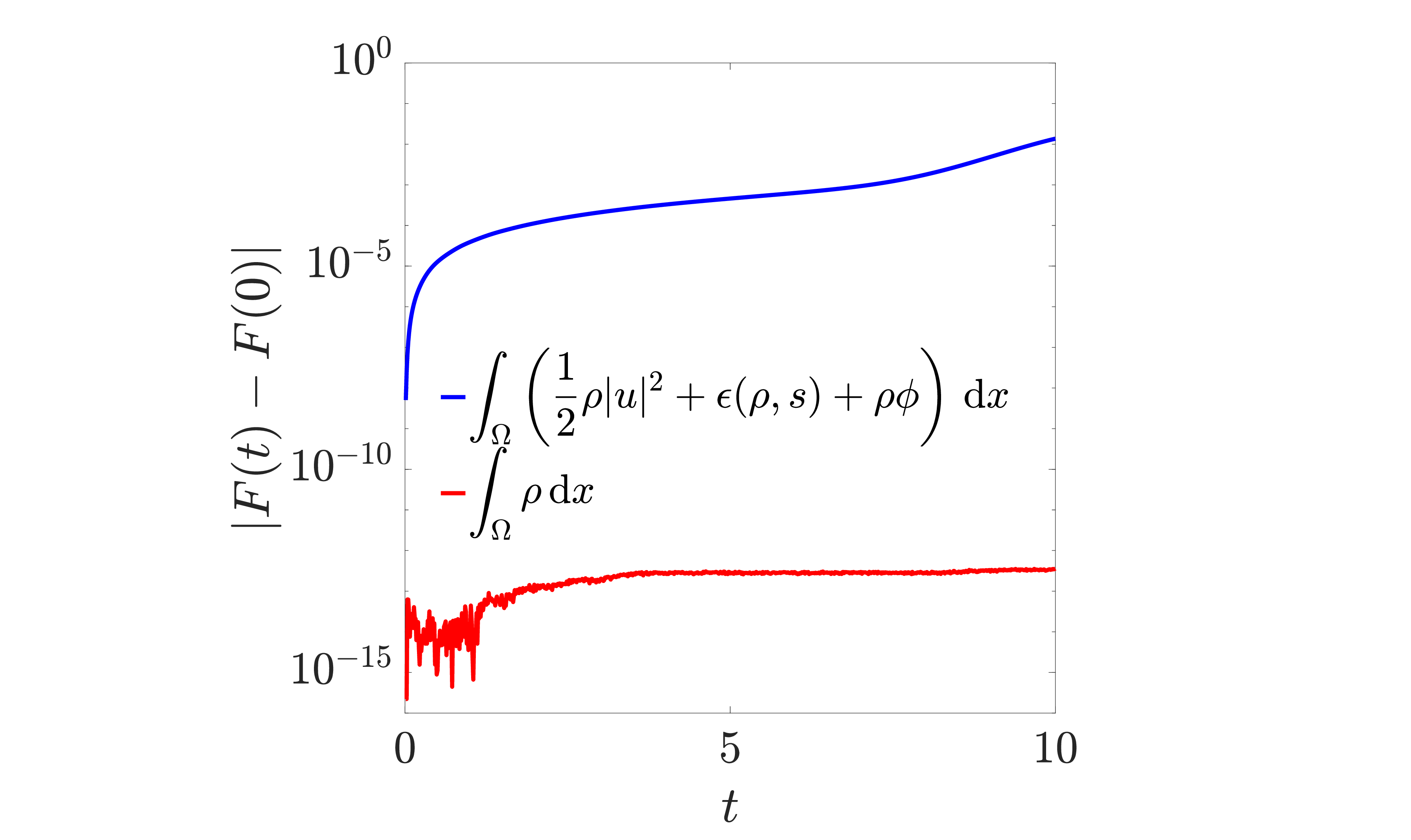} 
\includegraphics[width=0.32\textwidth,trim=3.8in 0.3in 5.7in 0.5in,clip=true]{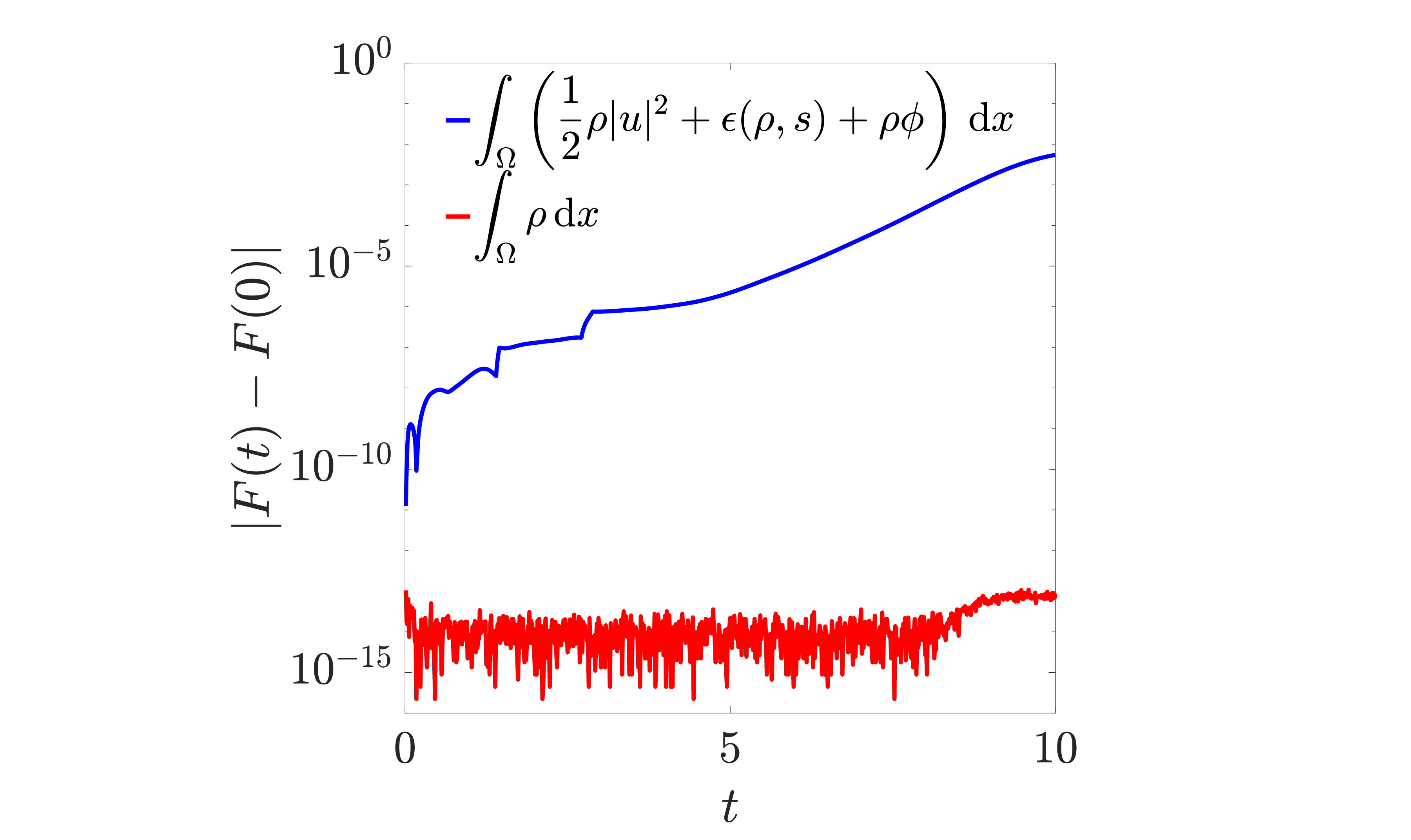}
 \caption{Evolution of mass and energy during the three simulations in the top row of Fig.~\ref{fig:contours}.  The absolute deviations $|F(t)-F(0)|$ are plotted for each conserved quantity $F(t)$.} \label{fig:energy}
\end{figure}

It is worth mentioning that the finite volume scheme also fails to preserve the total entropy $\int_\Omega s \, dx$ in the purely reversible setting ($\kappa=\mu=\lambda=0$).  This is illustrated in the blue and red curves in Fig.~\ref{fig:stotal}, where we ran the same experiment as above using the finite volume scheme with $h=\frac{1}{64}$, this time with $\mathrm{Re}=\infty$ and $\mathrm{Fr}=0.5$.  In contrast, our structure-preserving scheme preserves $\int_\Omega s \, dx$ to machine precision in the purely reversible setting if one uses the finite element space $V_h = DG_0(\mathcal{T}_h)$.  One can see this by taking $w=\frac{1}{D_2\epsilon}$ in~\eqref{fully_discrete_NSF}.  This fact is illustrated in the black curve in Fig.~\ref{fig:stotal}, where we used our structure-preserving scheme with $V_h = DG_0(\mathcal{T}_h)$, $U_h^{\operatorname{grad}} = CG_1(\mathcal{T}_h)^2$, and $h=\frac{\sqrt{2}}{64}$.

\begin{figure}
\centering
\includegraphics[width=0.34\textwidth,trim=2.9in 0.3in 5.5in 0.5in,clip=true]{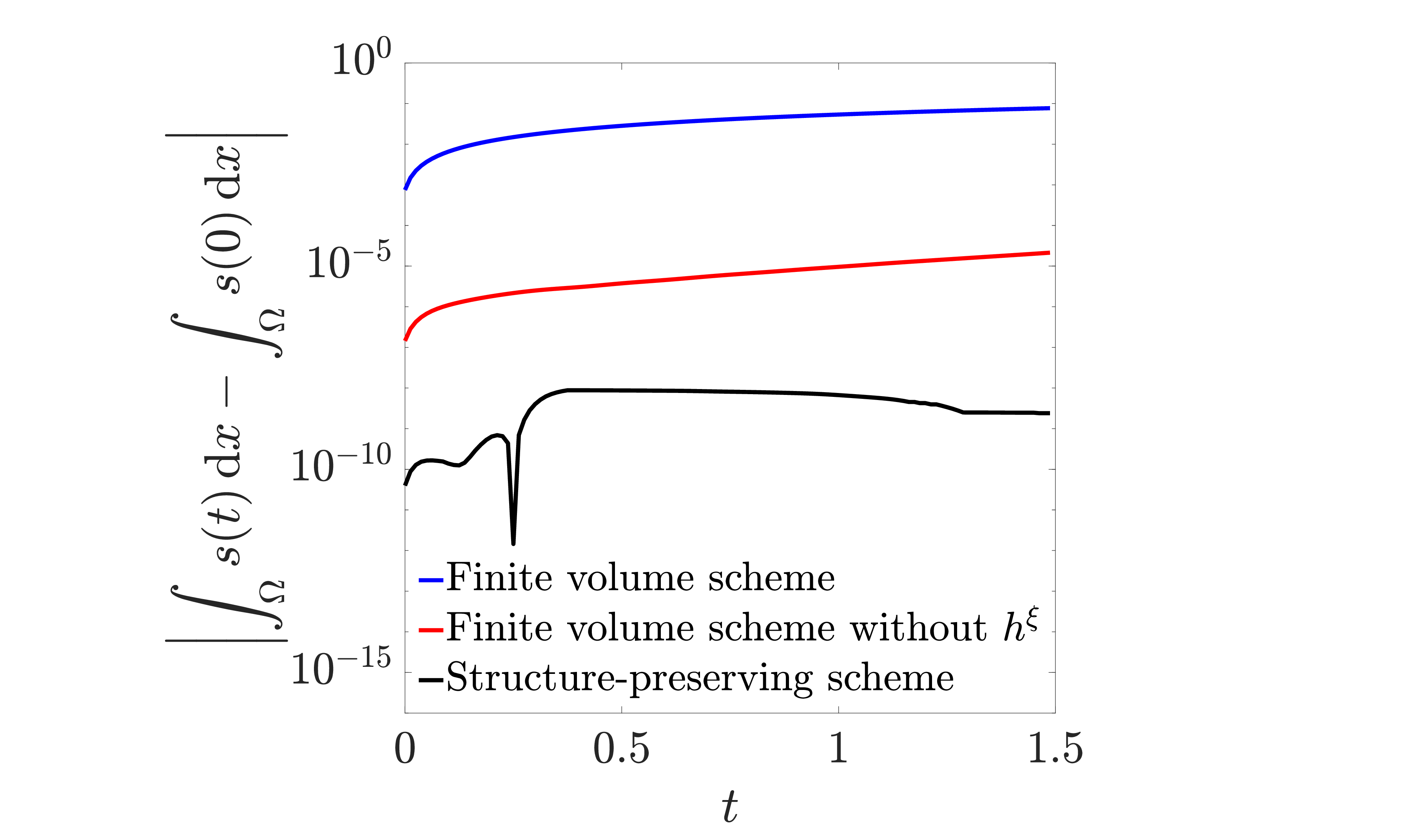}
 \caption{Evolution of total entropy in the purely reversible setting.} \label{fig:stotal}
\end{figure}

\subsection{Numerical convergence}

\begin{table}
\centering
\pgfplotstabletypeset[
header=false,
font=\small,
clear infinite,
every head row/.style={before row=\hline,after row=\hline},
every last row/.style={after row=\hline},
every row no 4/.style={before row=\hline},
every row no 8/.style={before row=\hline},
every row no 12/.style={before row=\hline},
every row no 16/.style={before row=\hline},
every row no 20/.style={before row=\hline},
create on use/newcol/.style={
        create col/set list={NaN,1,NaN,NaN,NaN,1,NaN,NaN,NaN,2,NaN,NaN,NaN,2,NaN,NaN,NaN,3,NaN,NaN,NaN,3,NaN,NaN,}
},
create on use/newcol2/.style={
        create col/set list={NaN,0,NaN,NaN,NaN,1,NaN,NaN,NaN,1,NaN,NaN,NaN,2,NaN,NaN,NaN,2,NaN,NaN,NaN,3,NaN,NaN,}
},
columns={newcol,newcol2,0,1,2,3,4,5,6},
columns/newcol/.style={column type/.add={|}{},column name={$r$}},
columns/newcol2/.style={column type/.add={|}{},column name={$q$}},
columns/0/.style={sci zerofill,column type/.add={|}{|},column name={$\frac{\sqrt{2}}{h}$}},
columns/1/.style={dec sep align={c|},sci,sci 10e,sci zerofill,precision=2,column type/.add={}{|},column name={$\|u_h-u\|_{L^2(\Omega)}\hspace{-0.05in}$}},
columns/3/.style={dec sep align={c|},sci,sci 10e,sci zerofill,precision=2,column type/.add={}{|},column name={$\|s_h-s\|_{L^2(\Omega)}\hspace{-0.05in}$}}, 
columns/5/.style={dec sep align={c|},sci,sci 10e,sci zerofill,precision=2,column type/.add={}{|},column name={$\|\rho_h-\rho\|_{L^2(\Omega)}\hspace{-0.05in}$}}, 
columns/2/.style={dec sep align={c|},fixed zerofill,precision=2,column type/.add={}{|},column name={Rate}},
columns/4/.style={dec sep align={c|},fixed zerofill,precision=2,column type/.add={}{|},column name={Rate}},
columns/6/.style={dec sep align={c|},fixed zerofill,precision=2,column type/.add={}{|},column name={Rate}},
]
{thermoconv.dat}
\caption{$L^2(\Omega)$-errors in the velocity, entropy, and density at time $t=0.5$ for various values of $h$, $r$, and $q$ (with $\Delta t$ fixed).} \label{tab:conv}
\end{table}

\begin{table}
\centering
\pgfplotstabletypeset[
header=false,
font=\small,
clear infinite,
every head row/.style={before row=\hline,after row=\hline},
every last row/.style={after row=\hline},
every row no 4/.style={before row=\hline},
columns={0,1,2,3,4,5,6},
columns/0/.style={sci zerofill,column type/.add={|}{|},column name={$\Delta t^{-1}$}},
columns/1/.style={dec sep align={c|},sci,sci 10e,sci zerofill,precision=2,column type/.add={}{|},column name={$\|u_h-u\|_{L^2(\Omega)}$}},
columns/3/.style={dec sep align={c|},sci,sci 10e,sci zerofill,precision=2,column type/.add={}{|},column name={$\|s_h-s\|_{L^2(\Omega)}$}}, 
columns/5/.style={dec sep align={c|},sci,sci 10e,sci zerofill,precision=2,column type/.add={}{|},column name={$\|\rho_h-\rho\|_{L^2(\Omega)}$}}, 
columns/2/.style={dec sep align={c|},fixed zerofill,precision=2,column type/.add={}{|},column name={Rate}},
columns/4/.style={dec sep align={c|},fixed zerofill,precision=2,column type/.add={}{|},column name={Rate}},
columns/6/.style={dec sep align={c|},fixed zerofill,precision=2,column type/.add={}{|},column name={Rate}},
]
{thermoconv_dt.dat}
\caption{$L^2(\Omega)$-errors in the velocity, entropy, and density at time $t=0.5$ for various values of $\Delta t$ (with $h$, $r$, and $q$ fixed).} \label{tab:conv_dt}
\end{table}

To test the convergence of our scheme, we considered the same setup as in~\S\ref{sec:rayleigh}, this time with initial conditions
\begin{align*}
u(x,z) &= \frac{1}{10} ( \cos(\pi x)\sin(\pi z), \sin(\pi x)\sin(\pi z) ), \\
T(x,z) &= 1+Z(1-z) + \frac{1}{10} \sin(\pi z), \\
\rho(x,z) &= 1.
\end{align*}
We imposed periodic boundary conditions in the $x$-direction and the no-slip boundary condition $u=0$ along $z=0$ and $z=1$.
We added forcing terms to the right-hand sides of~\eqref{adim_u}-\eqref{adim_rho} to make the exact solution equal to
\begin{align*}
u(x,z,t) &= \frac{1}{10} ( \cos(\pi x)\sin(\pi z) \cos t, \sin(\pi x)\sin(\pi z) \cos t ), \\
T(x,z,t) &= 1+Z(1-z) + \frac{1}{10} \sin(\pi z) \cos t, \\
\rho(x,z,t) &= 1 + \frac{1}{10} \sin(\pi x) \sin(\pi z) \sin t.
\end{align*}
The exact entropy is therefore 
\[
s(x,z,t) =\frac{\rho(x,z,t)}{\gamma-1} \log\left( \frac{T(x,z,t)}{(\gamma-1)\rho(x,z,t)^{\gamma-1}}\right),
\]
as one can verify by solving $T = \frac{\partial\epsilon}{\partial s}$ for $s$.  We imposed Dirichlet boundary conditions for $T$ along $z=0$ and $z=1$ using the exact values of $T$ given above.

We took $\gamma=1.1$, $\mathrm{Re}=100$, $Z=0.42$, $\mathrm{Pr} = 2.5$, and $\mathrm{Fr} = \frac{1}{Z}$, and we used a penalty parameter $\eta = 0.01\kappa$ with $\kappa = \frac{1}{\mathrm{Re}}\frac{1}{\mathrm{Pr}}\frac{\gamma}{\gamma-1}$.  We used a small time step $\Delta t = \frac{1}{320}$ to ensure that temporal discretization errors would be negligible, and we used finite element spaces $V_h = DG_q(\mathcal{T}_h)$ and $U_h^{\rm grad} = CG_r(\mathcal{T}_h)^2$.   

Table~\ref{tab:conv} shows the $L^2(\Omega)$-errors in the computed solution $(u_h,s_h,\rho_h)$ at time $t=0.5$ for $h \in \{\frac{\sqrt{2}}{2},\frac{\sqrt{2}}{4},\frac{\sqrt{2}}{8},\frac{\sqrt{2}}{16}\}$, $r \in \{1,2,3\}$, and $q \in \{r-1,r\}$.  In all cases except $(r,q)=(1,0)$, the velocity, entropy, and density converged at rates given approximately by $O(h^{r+1})$, $O(h^r)$, and $O(h^r)$, respectively, both with $q=r-1$ and with $q=r$.  In the case $(r,q)=(1,0)$, the entropy and density converged linearly but the velocity did not converge quadratically.  Our experiments with $(r,q)=(1,0)$ on even finer meshes (not shown in the table) suggest that the velocity's convergence rate is asymptotically linear when $(r,q)=(1,0)$; the errors $\|u_h-u\|_{L^2(\Omega)}$ were $2.44 \cdot 10^{-3}$ and $1.28 \cdot 10^{-3}$, respectively, for $h = \frac{\sqrt{2}}{32}$ and $h=\frac{\sqrt{2}}{64}$.

We also tested the convergence rate of our temporal discretization by fixing $h=\frac{\sqrt{2}}{16}$, $r=4$, and $q=4$, and running the above experiment with $\Delta t \in \{\frac{1}{8},\frac{1}{16},\frac{1}{32},\frac{1}{64}\}$.  The results, reported in Table~\ref{tab:conv_dt}, indicate that our scheme is second-order accurate in time.

\section{Conclusion}

We have constructed thermodynamically consistent finite element methods for compressible viscous heat conducting fluids by focusing on the Navier-Stokes-Fourier case with the following  types of thermal boundary conditions: insulated boundaries, prescribed heat flux, or prescribed temperature boundary conditions. It was shown that the resulting class of schemes satisfies the two laws of thermodynamics at the fully discrete level. More precisely, regarding the first law, the total energy was shown to be exactly preserved at the fully discrete level when the fluid is adiabatically closed, while a discrete energy balance was proven to hold in the presence of external heating, thereby exactly reproducing the energy balance of the continuous case. Regarding the second law, the entropy generated by the internal irreversible processes was shown to grow at each time step on each cell.

We derived the scheme by discretizing a variational formulation for heat conducting viscous fluids that extends the classical Hamilton principle for fluid dynamics to include irreversible processes.
The treatment of different thermal boundary conditions led to nontrivial changes in the way the thermodynamic fluxes associated with heat conduction are discretized. In each case the variational derivation was crucial to achieve thermodynamic consistency. Natural extensions of our scheme to the case of rotating fluids, variable viscous and thermal coefficients, and upwinding techniques were discussed.

The qualitative properties of the scheme were illustrated by simulating the onset of convection in the celebrated Rayleigh-B\'enard experiment with both prescribed temperature and prescribed heat flux boundary conditions. 
We also tested the convergence rate of our scheme with respect to spatial and temporal refinement and observed second-order accuracy in time and high-order accuracy in space.
A comparison between our scheme and a recently proposed finite volume method for the Navier-Stokes-Fourier equation indicated that our scheme more faithfully respects the first two laws of thermodynamics.
We are not aware of other methods sharing the same properties within the finite element/finite volume literature.

Since the techniques developed in this paper draw upon a general variational formulation that underlies a large class of continuum thermodynamic systems, they are potentially applicable to a large range of models that we plan to study in the future.
Immediate applications include ongoing work on plasma physics and geophysical fluid dynamics, where the accurate long term behavior and the thermodynamic consistency of the discrete models play a crucial role. Future studies should also include error analyses for the schemes derived in the paper.

\appendix

\section{Finite dimensional guiding examples}\label{A}

The treatment of finite dimensional systems is useful for understanding several aspects of the variational formulation for the heat conducting viscous fluid, such as the occurrence of two entropy variables ($S$ and $ \Sigma $ in the Lagrangian description, $s$ and $\varsigma$ in the Eulerian description) and the form of the constraints, especially in the case when the system is not adiabatically closed, which arises when the temperature is prescribed at the boundary. We follow the variational setting developed in \cite{GBYo2017a} and \cite{GBYo2018b} for adiabatically closed and for open systems.

Consider a thermodynamic system described by a mechanical variable $q \in Q$ in a finite dimensional configuration manifold and an entropy variable $S \in \mathbb{R} $. We assume the system has a Lagrangian $L:TQ \times \mathbb{R} \rightarrow \mathbb{R} $ and involves a friction force $F^{\rm fr}:TQ\times \mathbb{R} \rightarrow T^* Q$, with $F^{\rm fr}(q, v, S)\in T^*_qQ$. The total energy and the temperature of the system are defined from $L$ by
\begin{equation}\label{E_T} 
E= \left\langle \frac{\partial L}{\partial \dot q}, \dot q \right\rangle - L \quad\text{and}\quad T= - \frac{\partial L}{\partial S}.
\end{equation}
It is assumed that the Lagrangian is such that $ T>0$ for all $(q, v, S)$.

\paragraph{Adiabatically closed system.} When the system does not interact with its surroundings, the variational formulation is given as follows: find the curves $q(t) \in Q$, $S(t) \in \mathbb{R} $ which are critical for the \textit{variational condition}
\begin{equation}\label{LdA_thermo_simple} 
\delta \int_{t_0}^{t_1}L(q , \dot q , S){\rm d}t =0,
\end{equation}
subject to the \textit{phenomenological constraint}
\begin{equation}\label{CK_simple} 
\frac{\partial L}{\partial S} \dot S  =  \big\langle F^{\rm fr} , \dot q \big\rangle, 
\end{equation}
and for variations subject to the  \textit{variational constraint}
\begin{equation}\label{CV_simple} 
\frac{\partial L}{\partial S} \delta S= \big\langle F^{\rm fr} , \delta  q \big\rangle,
\end{equation}
with $ \delta q(t_0)=\delta q(t_1)=0$.

A direct application of \eqref{LdA_thermo_simple}--\eqref{CV_simple} yields the coupled mechanical-thermal evolution equations:
\begin{equation}\label{simple_systems} 
\frac{d}{dt}\frac{\partial L}{\partial \dot q}- \frac{\partial L}{\partial q}=  F^{\rm fr}, \qquad \frac{\partial L}{\partial S}\dot S= \big\langle F^{\rm fr}, \dot q \big\rangle.
\end{equation}
With these equations, one directly gets the balance of total energy, see \eqref{E_T},  and entropy
\begin{equation}\label{balance_E_T} 
\dot  E=0 \qquad\text{and}\qquad \dot S= - \frac{1}{T} \big\langle F^{\rm fr}, \dot q \big\rangle.
\end{equation}
In particular, the second law requires that the force $F^{\rm fr}$ must be dissipative.

A time discretization of this variational approach has been proposed in \cite{GBYo2018a,CoGB2020}.

\paragraph{System with a heat source.} Assume that the system has an external heat source of temperature $T_H$ with an entropy flow $ \mathcal{J} _{S,H}$. The variational formulation needs the introduction of two additional variables, namely, (i) the thermal displacement $ \Gamma $ with $\dot \Gamma =T$ the temperature and (ii) the entropy variable $ \Sigma $ with $\dot \Sigma =I\geq 0$ the rate of internal entropy production. The two relations $\dot \Gamma =T$ and $ \dot  \Sigma =I$ are not imposed a priori but follow from the variational formulation. For such systems the variational formulation is given as follows: find the curves $q(t) \in Q$, $S(t), \Gamma (t), \Sigma (t) \in \mathbb{R} $ which are critical for the \textit{variational condition}
\begin{equation}\label{LdA_thermo_simple_open}
\delta  \int_{t_0}^{t_1}\!\! \Big[ L(q, \dot q, S)+ \dot \Gamma (S- \Sigma) \Big] {\rm d}t =0,
\end{equation}
subject to the \textit{kinematic constraint}
\begin{equation}\label{CK_simple_open}
\frac{\partial L}{\partial S} \dot \Sigma   =  \big\langle F^{\rm fr} , \dot q \big\rangle + \mathcal{J} _{S,H}(\dot \Gamma - T_H)
\end{equation}
and for variations subject to the  \textit{variational constraint}
\begin{equation}\label{CV_simple_open}
\frac{\partial L}{\partial S} \delta \Sigma  =  \big\langle F^{\rm fr} , \delta  q \big\rangle + \mathcal{J} _{S,H} \delta  \Gamma,
\end{equation}
with $ \delta q|_{t=t_0,t_1}=0$.

Application of \eqref{LdA_thermo_simple_open}--\eqref{CV_simple_open} yields the conditions
\[
\delta q: \;\;\frac{d}{dt}\frac{\partial L}{\partial \dot q}- \frac{\partial L}{\partial q}=  F^{\rm fr}, \qquad \delta S:\;\;\dot \Gamma = - \frac{\partial L}{\partial S} , \qquad \delta \Gamma :\;\;\dot S = \dot  \Sigma + \mathcal{J} _{S,H}.
\]
The second condition imposes that $ \Gamma $ is the thermal displacement while the third condition splits the rate of entropy production $\dot S$ as the sum of the rate of internal entropy production $\dot \Sigma $ and the entropy flow rate $ \mathcal{J} _{S,H}$. Finally, one gets the coupled mechanical-thermal evolution equations:
\begin{equation}\label{simple_systems_open} 
\frac{d}{dt}\frac{\partial L}{\partial \dot q}- \frac{\partial L}{\partial q}=  F^{\rm fr}, \qquad \frac{\partial L}{\partial S} (\dot S - \mathcal{J} _{S,H})   =  \big\langle F^{\rm fr} , \dot q \big\rangle + \mathcal{J} _{S,H} \Big( -\frac{\partial L}{\partial S}  - T_H \Big) .
\end{equation} 
With this system, the balance of energy and entropy \eqref{balance_E_T} is modified as
\begin{equation}\label{balance_E_T_open}
\dot E= T_H \mathcal{J} _{S,H} \qquad\text{and}\qquad \dot S= - \frac{1}{T} \big\langle F^{\rm fr}, \dot q \big\rangle - \frac{1}{T} \mathcal{J} _{S,H}(T-T_H) +  \mathcal{J} _{S,H}.
\end{equation} 
The second law requires that $- \frac{1}{T} \big\langle F^{\rm fr}, \dot q \big\rangle\geq 0$ and $- \frac{1}{T} \mathcal{J} _{S,H}(T-T_H)\geq 0$, while the entropy flow $ \mathcal{J} _{S,H}$ can have an arbitrary sign.

\paragraph{Structure of the variational formulation.} In the adiabatically closed case, one passes from the phenomenological constraint \eqref{CK_simple} to the variational constraint \eqref{CV_simple} by formally replacing the time rate of changes (here $ \dot  S$ and $ \dot  q$) by $ \delta$-variations (here $ \delta S$ and $ \delta q$). This is a general variational setting for adiabatically closed systems, see \cite{GBYo2017a}, which is reminiscent of the Lagrange-d'Alembert principle used in nonholonomic mechanics. The same approach is used to pass from the phenomenological constraint \eqref{KC_fluid} to the variational constraint \eqref{VC_fluid} for fluids in the adiabatically closed case.

In the case with a heat source the kinematic constraint is affine in the rate of changes, hence one passes from \eqref{CK_simple_open} to \eqref{CV_simple_open} by formally replacing the rate of changes (here $ \dot  S$, $ \dot  q$, and $ \dot  \Gamma $) by $ \delta$-variations (here $ \delta S$, $ \delta q$, and $ \delta \Gamma $) and by removing the affine terms associated to the exterior (here $ \mathcal{J} _{S,H}T_H$). This is a general approach for open systems, see \cite{GBYo2018b}. It is used to pass from the constraint \eqref{KC_fluid_W} to the constraint \eqref{VC_fluid_W} for fluids with prescribed boundary temperature

\section{Variational derivation of heat conducting viscous fluid equations}\label{B}

\subsection{Lagrangian description}\label{B_0}

In the Lagrangian description, the computation of the critical condition is much simpler than its Eulerian counterpart and follows closely its finite dimensional analog mentioned above.

For the adiabatically closed case, one gets from \eqref{VP_fluid}--\eqref{VC_fluid} the fluid conducting viscous fluid equation in Lagrangian form as
\begin{equation}\label{equations_mat} 
\left\{
\begin{array}{l}
\vspace{0.2cm}\displaystyle\frac{d}{dt}\frac{ \delta  L }{ \delta  \dot \varphi }   - \frac{\delta  L }{\delta  \varphi }= \operatorname{DIV} P \\
\vspace{0.2cm}\displaystyle
-\frac{\delta  L }{\delta   S}(\dot S + \operatorname{DIV}J_S ) =P: \nabla\dot \varphi  +J _S \cdot \nabla \frac{\delta  L }{\delta   S},
\end{array}
\right.
\end{equation}
together with the conditions
\begin{equation}\label{additional_conditions_mat} 
\dot \Gamma = - \frac{\delta  L}{\delta  S}, \qquad  \dot \Sigma = \dot S + \operatorname{DIV}J_S \qquad\text{and}\qquad J_S \cdot n=0 \;\;\text{on}\;\; \partial \Omega 
\end{equation} 
associated to the variations $\delta S$ and $ \delta \Gamma $.
Since $ -\delta L/ \delta S>0$ is identified with the temperature, the first condition implies that $ \Gamma $ is the thermal displacement. The expression $ \dot S + \operatorname{DIV}J_S$ is the rate of internal entropy production, positive by the second law, hence the second condition implies that $ \Sigma $ is the internal entropy density. The last condition is the insulated boundary condition. It follows from the variation $ \delta \Gamma $ on the boundary $ \partial  \Omega$ and implies that the fluid system is adiabatically closed.

For the case with prescribed boundary temperature, one applies the variational principle \eqref{VP_fluid}-\eqref{KC_fluid_W}-\eqref{VC_fluid_W} and directly gets the equations \eqref{equations_mat} and \eqref{additional_conditions_mat} with the last equation of \eqref{additional_conditions_mat} replaced by the boundary condition
\[
(J_s \cdot n)( \mathfrak{T} - \mathfrak{T} _0)=0 \quad\text{on}\quad  \partial \Omega.
\]
Here $ \mathfrak{T} =- \delta L/ \delta S$ is the temperature in the Lagrangian frame.

\subsection{Neumann boundary conditions}\label{B_1}

We show that the variational formulation \eqref{VP_NSF_spatial}--\eqref{EP_constraints}  yields the equations \eqref{Equations_Eulerian} together with the conditions \eqref{additional_conditions}.

Taking the variations in \eqref{VP_NSF_spatial} and using $ \delta \gamma |_{t=t_0,t_1}=0$ and $ u|_{ \partial \Omega }=0$, we get
\begin{equation}\label{taking_variations} 
\int_{t_0}^{t_1} \int_ \Omega \Big[ \frac{\delta  \ell}{\delta  u} \cdot \delta u + \frac{\delta  \ell}{\delta  \rho  } \delta   \rho  + \frac{\delta  \ell}{\delta  s} \delta s + \delta s D_t \gamma - \delta \varsigma D_t \gamma -  \bar D_t(s- \varsigma ) \delta \gamma + (s- \varsigma ) \nabla \gamma \cdot \delta u \Big] {\rm d} x{\rm d}t=0.
\end{equation} 
Since $ \delta s $ is arbitrary, we get the condition
\begin{equation}\label{1_conditions} 
D_t \gamma = - \frac{\delta  \ell}{\delta  s}
\end{equation} 
thereby recovering the first condition in \eqref{additional_conditions}. Making use of this condition and of the variational constraint \eqref{VC_NSF_spatial} yields 
\begin{equation}\label{intermediate}
\begin{aligned} 
&\int_{t_0}^{t_1}  \int_ \Omega \Big[ \frac{\delta  \ell}{\delta  u} \cdot \delta u + \frac{\delta  \ell}{\delta  \rho  } \delta   \rho  - \sigma : \nabla \vv + j_s \cdot \nabla ( \delta \gamma + \vv \cdot \nabla \gamma ) - \operatorname{div}(  \varsigma \vv ) \frac{\delta  \ell}{\delta  s} \\
& \hspace{3cm} -  \bar D_t(s- \varsigma ) \delta \gamma + (s- \varsigma ) \nabla \gamma \cdot \delta u \Big] {\rm d} x{\rm d}t=0.
\end{aligned}
\end{equation} 
Since $ \delta \gamma $ is arbitrary, we get
\begin{equation}\label{2_conditions} 
\bar D_t \varsigma = \bar D_t s + \operatorname{div} j_s  \qquad\text{and}\qquad j_s \cdot n =0 \;\;\text{on}\;\; \partial \Omega
\end{equation} 
thereby recovering the second and third conditions in \eqref{additional_conditions}. Having found these conditions, we now use $ \vv |_{ \partial \Omega }=0$ and the second condition of \eqref{EP_constraints}, i.e. $\delta \rho  =- \operatorname{div}( \rho  \vv )$, to rewrite \eqref{intermediate} in the form 
\[
 \int_{t_0}^{t_1} \int_ \Omega \Big[ \Big( \frac{\delta  \ell}{\delta  u} + (s- \varsigma ) \nabla \gamma \Big) \cdot \delta u +  \Big( \rho  \nabla \frac{\delta  \ell}{\delta  \rho  } + \varsigma  \nabla \frac{\delta  \ell}{\delta  s  } + \operatorname{div} \sigma  \Big) \cdot \vv  - (\operatorname{div}j_s)  \vv \cdot \nabla \gamma \Big] {\rm d} x{\rm d}t=0.
\]
Using the first condition of \eqref{EP_constraints}, i.e. $ \delta u = \partial _t \vv + [u, \vv]$, integrating by parts, and using the identity
\[
\left( \partial _t + \pounds _u \right)  \big((s- \varsigma ) \nabla \gamma\big) = \bar D_t  (s- \varsigma ) \nabla \gamma + (s- \varsigma ) \nabla D_t \gamma= - \operatorname{div} j_s  \nabla \gamma - (s- \varsigma ) \nabla \frac{\delta  \ell}{\delta  s} ,
\]
with $ \pounds _u$ the Lie derivative of one-form densities, we get the first equation in \eqref{Equations_Eulerian}.
The second equation in \eqref{Equations_Eulerian} follows from the phenomenological constraint \eqref{KC_NSF_spatial} together with  \eqref{1_conditions} and the first condition in \eqref{2_conditions}.

\subsection{Dirichlet boundary conditions}\label{B_2}

We briefly explain how the variational formulation \eqref{VP_NSF_spatial}-\eqref{KC_NSF_spatial_w_Dirichlet}-\eqref{VC_NSF_spatial_w_Dirichlet}-\eqref{EP_constraints} yields the equations \eqref{Equations_Eulerian} together with the conditions in \eqref{additional_conditions}, with the last one replaced by \eqref{Dirichlet_BC}.

We proceed similarly as in \S\ref{B_1} before. Making use of \eqref{VC_NSF_spatial_w_Dirichlet} instead of \eqref{VC_NSF_spatial} we get only the first condition in \eqref{2_conditions}, namely
\begin{equation}\label{2_conditions_Dirichlet} 
\bar D_t \varsigma = \bar D_t s + \operatorname{div} j_s
\end{equation}
since the boundary term involving $j_s \cdot n$ is cancelled by the additional term in the constraint \eqref{VC_NSF_spatial_w_Dirichlet}. The first equation in \eqref{Equations_Eulerian} follows exactly as before. Now the kinematic constraint \eqref{KC_NSF_spatial_w_Dirichlet}, together with \eqref{1_conditions} and the first condition in \eqref{2_conditions} give both the second equation in \eqref{Equations_Eulerian} and the boundary condition  \eqref{Dirichlet_BC}.

\subsection{Weak forms}\label{B_3}

As explained in \S\ref{weak_form_continuous}, the constraints \eqref{KC_NSF_spatial}--\eqref{VC_NSF_spatial}  and \eqref{KC_NSF_spatial_w_Dirichlet}--\eqref{VC_NSF_spatial_w_Dirichlet} can be written in a unified way by using the expressions $d( \cdot , \cdot , \cdot )$ and $e( \cdot , \cdot )$, see \eqref{unified_writing_KC}--\eqref{unified_writing_VC}. The computations reviewed above can be equivalently formulated with these expressions. However, it is useful to explain how the form of the entropy equation (the second equation in \eqref{good_weak_form}) emerges since its form plays a crucial role in our approach.

After taking the variation as earlier in \eqref{taking_variations}, one uses in it the variational constraint with $w=1$, i.e.
\[
\Big\langle 1, \frac{\delta   \ell}{ \delta  s}  \bar D_\delta  \varsigma \Big\rangle =- c(1,u, \vv ) + d\Big( 1, - \frac{\delta \ell}{\delta s}, D_\delta  \gamma \Big).
\]
Proceeding as earlier, this yields
\begin{equation}\label{intermadiate_varsigma} 
\left\langle \bar D_t( \varsigma -s), \delta \gamma \right\rangle = -d\Big( 1, - \frac{\delta \ell}{\delta s}, \delta  \gamma \Big), \;\;\forall\, \delta \gamma 
\end{equation} 
which is the weak form of both \eqref{2_conditions} and \eqref{2_conditions_Dirichlet} depending on the boundary conditions considered.
One finally gets the first equation in \eqref{good_weak_form}, while the  second equation in \eqref{good_weak_form} follows from the kinematic constraint \eqref{unified_writing_KC} in which \eqref{intermadiate_varsigma} is used with $ \delta \gamma = w \frac{\delta \ell}{\delta s}$. This explains the very specific forms of the two terms involving $d( \cdot , \cdot , \cdot )$ in the weak form of the entropy equation, namely
\[
d \Big( 1, -\frac{\delta \ell}{\delta s}, \frac{\delta \ell}{\delta s} w\Big) \quad\text{and}\quad d\Big( w,-\frac{\delta \ell}{\delta s}, \frac{\delta \ell}{\delta s}\Big).
\]

\section*{Acknowledgment}

EG was supported by NSF grant DMS-2012427 and the Simons Foundation award MP-TSM-00002615. FGB was supported by a start-up grant from the Nanyang Technological University.

%\section*{References}
%They are to be cited in the text in superscript
%after comma and period (e.g.~word,\cite{am})
%but before other punctuation marks like colons,
%(e.g.~word\cite{lar}:) semi-colons and\break
%question marks. If it is mentioned in the text as part of a sentence,
%it should be of normal size, e.g.~see \cite{lar}.'')

\end{document}